\newtheorem{theorem}{Theorem}
\newtheorem{lemm}[theorem]{Lemma}
\newtheorem{defi}[theorem]{Definition}
\newtheorem{coro}[theorem]{Corollary}
\newtheorem{prop}[theorem]{Proposition}
\newtheorem{theo}[theorem]{Theorem}
\newtheorem{rem}[theorem]{Remark}
\begin{document}

\title{\MakeLowercase{n}-absorbing ideal factorization in commutative rings}
\newcommand{\Ass}{\mbox{\textup{Ass}}} 
\newcommand{\NDC}{\mbox{\textup{NDC}}}
\newcommand{\Spec}{\mbox{\textup{Spec}}}
\newcommand{\Min}{\mbox{\textup{Min}}} 
\newcommand{\height}{\mbox{\textup{ht}}}
\newcommand{\supp}{\mbox{\textup{supp}}}
\newcommand{\reg}{\mbox{\textup{reg}}}
\newcommand{\adeg}{\mbox{\textup{adeg}}}

\author{Hyun Seung choi}
  \address{Department of Mathematics Education, Kyungpook National University, 80 Daehakro,
Bukgu, Daegu 41566, Korea}
\curraddr{}
\email{hchoi21@knu.ac.kr}
\thanks{}


\subjclass[2020]{Primary: 13A15; Secondary: 13B30, 13F05, 13G05}

\keywords{almost pseudo-valuation domains, pseudo-valuation domains, Mori domains, 2-absorbing ideals, strongly Laskerian rings}


\dedicatory{}

\begin{abstract}
In this article, we show that Mori domains, pseudo-valuation domains, and $n$-absorbing ideals, the three seemingly unrelated notions in commutative ring theory,  are interconnected. In particular, we prove that an integral domain $R$ is a Mori locally pseudo-valuation domain if and only if each proper ideal of $R$ is a finite product of 2-absorbing ideals of $R$. Moreover, every ideal of a Mori locally almost pseudo-valuation domain can be written as a finite product of 3-absorbing ideals. To provide concrete examples of such rings, we study rings of the form $A+XB[X]$ where $A$ is a subring of a commutative ring $B$ and $X$ is indeterminate, which is of independent interest, and along with several characterization theorems, we prove that in such a ring, each proper ideal is a finite product of $n$-absorbing ideals for some $n\ge 2$ if and only if $A$ and $B$ are both Artinian reduced rings and the contraction map $\textnormal{Spec}(B)\to\textnormal{Spec}(A)$ is a bijection. A complete description of when an order of a quadratic number field is a locally pseudo valuation domain, a locally almost pseudo valuation domain or a locally conducive domain is given.
\end{abstract}

\maketitle

\section{introduction}
 Pseudo-valuation domains (PVDs) were defined by Hedstrom and Houston \cite{HH}, as a generalization of valuation domains. Due to its several interesting properties, pseudo-valuation domains have been extensively studied \cite{F, HH2}, and their generalizations were also considered. Among them are 
 locally pseudo-valuation domains (LPVDs) considered by Dobbs and Fontana \cite{df83}, and almost pseudo-valuation domains (APVDs) introduced by Badawi and Houston \cite{BH}. 
 
 This paper aims to investigate the ring-theoretic properties of a certain class of commutative rings including that of pseudo-valuation domains, almost pseudo-valuation domains  and locally pseudo-valuation domains
 . In section 2, we gather well-known properties of these classes of integral domains, and prove the APVD-counterparts of known theorems concerning PVDs. Several results proved in this section are used throughout the paper.
 In section 3, we study Mori LPVDs. It is shown that a locally pseudo-valuation domain $R$ is a Mori domain if and only if the complete integral closure $R^{*}$ of $R$ is a Dedekind domain such that the contraction map $\textnormal{Spec}(R^{*})\to \textnormal{Spec}(R)$ is bijective (cf. Corollary \ref{triv58}). We also show that if $R$ is a Mori LPVD, then the Nagata ring of $R$ is a Mori LPVD if and only if the integral closure of $R$ and the complete integral closure of $R$ coincide.
 
 We also relate LPVDs with $n$-absorbing ideals of a commutative ring introduced by Anderson and Badawi in 2011 \cite{Anderson}. Using this concept, the AF-dimension of a ring $R$ can be defined as the smallest $n$ such that each proper ideal of $R$ is a finite product of $n$-absorbing ideals of $R$. Then a Dedekind domain is exactly an integral domain whose AF-dimension is one. We focus on the fact that since every prime ideal is a 2-absorbing ideal, some of the properties of Dedekind domains can be inherited by integral domains with AF-dimension at most two. Motivated by the fact that Dedekind domains possess several interesting properties that connects numerous classes of integral domains, we show that domains with AF-dimension at most two have similar properties.
 In particular, we show that an integral domain $R$ is a Mori LPVD if and only if the AF-dimension of $R$ is at most two. Using this, we show that an LPVD is strongly Laskerian if and only if it is a Mori domain (Lemma \ref{morilas}), extending \cite[Corollary 3.7]{b83}. Motivated by the globalized pseudo-valuation domains (GPVDs) introduced by Dobbs and Fontana \cite{df83}, we provide pullback descriptions of Mori GPVDs.
 
 In section 4, we give a structure theorem of TAF-rings corresponding to \cite[Theorem 39.2]{G}, and show that certain Noetherian domains with finite AF-dimension can be constructed from pullbacks. This result is used to classify orders of quadratic number fields in terms of AF-dimensions and LPVDs, LAPVDs and locally conducive domains, extending \cite[Theorem 2.5]{df87}.
 
 In section 5, we classify reduced rings of the form $A+XB[X]$ whose Krull dimension is 1, where $A\subseteq B$ is an extension of commutative unital rings and $X$ is an indeterminate, in order to provide concrete examples of TAF-rings. In particular, we show that $A+XB[X]$ is a TAF-ring exactly when the contraction map $\textnormal{Spec}(B)\to \textnormal{Spec}(A)$ is a bijection and both $A$ and $B$ are direct product of finitely many fields. 
 
\section{Ring-theoretic properties of almost pseudo-valuation domains}
Throughout this paper, every ring is assumed to be nonzero, commutative and unital. A \textit{quasilocal ring} (respectively, \textit{semi-quasilocal ring}) is a ring with only one  maximal ideal (respectively, finitely many maximal ideals). When $R$ is an integral domain, $K$ will denote its quotient field, and the integral closure (respectively, complete integral closure) of $R$ in $K$ will be denoted by $R'$ (respectively, $R^{*})$. $\mathbb{N}, \mathbb{Z}, \mathbb{Q}$ and $\mathbb{R}$ denote the set of natural numbers, the ring of integers, the field of rational numbers and that of real numbers, respectively. The nilradical (respectively, the set of prime ideals) of a ring $R$ will be denoted by $Nil(R)$ (respectively, $\textnormal{Spec}(R)$). 

\begin{defi}
\label{d1}
Let $R$ be an integral domain with quotient field $K$.
 \begin{enumerate}
     \item An ideal $I$ of $R$ is \textit{strongly prime} (respectively, \textit{strongly primary}) if given $x,y\in K$ with $xy\in I$, either $x\in I$ or $y\in I$ (respectively, either $x\in I$ or $y^{n}\in I$ for some $n\in\mathbb{N}$).
\item   $R$ is said to be a \textit{valuation domain} if the set of ideals of $R$ is totally ordered under set inclusion. 
\item A \textit{DVR} is a Noetherian valuation domain that is not a field.
\item $R$ is a Pr{\"u}fer domain if $R_{M}$ is a valuation domain for each maximal ideal $M$ of $R$.
\item $R$ is a \textit{pseudo-valuation domain}, or \textit{PVD} in short, if every prime ideal of $R$ is strongly prime.
     \item $R$ is an \textit{almost pseudo-valuation domain}, or \textit{APVD} in short, if every prime ideal of $R$ is strongly primary.
     \item An \textit{overring} of $R$ is a ring $T$ such that $R\subseteq T\subseteq K$. An overring $T$ of $R$ is \textit{proper} if $T\neq R$.
     \item By a \textit{valuation overring} of $R$ we mean an overring of $R$ that is also a valuation domain.
     \item Given two $R$-submodules $I, J$ of $K$, the set $\{x\in K\mid xJ\subseteq I\}$ will be denoted by $I:J$.
     \item A nonzero $R$-submodule $I$ of $K$ is said to be a \textit{fractional ideal} if $R:I$ contains a nonzero element.
     \item A fractional ideal $I$ of $R$ is \textit{invertible} if $I(R:I)=R$.
     \item $R$ is \textit{conducive} if each overring of $R$ other than $K$ is a fractional ideal of $R$.
     \item $R$ is \textit{locally conducive} if $R_{M}$ is conducive for each maximal ideal $M$ of $R$.
     \item $R$ is \textit{seminormal} if for any $x\in K$ such that $x^2\in R$ and $x^3\in R$, we have $x\in R$. 
 \end{enumerate}
 \end{defi}
Any unexplained terminology is standard, as in \cite{am69}, \cite{G} or \cite{K}.\\
 \\
 We first collect known results concerning PVDs, APVDs and conducive domains.
 

\begin{theorem}
    Every valuation domain is a PVD, and every PVD is an APVD.
\end{theorem}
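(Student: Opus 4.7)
The plan is to prove the two implications separately. The second implication---every PVD is an APVD---is essentially immediate from the definitions: if $P$ is strongly prime and $xy \in P$ with $x, y \in K$, then $x \in P$ or $y \in P$, so in particular $x \in P$ or $y^{1} \in P$, giving the strongly primary condition with $n = 1$. So the entire work lies in the first implication.

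For the first implication, let $R$ be a valuation domain with quotient field $K$ and $P$ a prime ideal. The goal is to show $P$ is strongly prime. Given $x, y \in K$ with $xy \in P$, I would first dispose of the trivial case $xy = 0$ and then exploit the defining feature of a valuation domain: for the nonzero ratio $x/y \in K^{*}$, either $x/y \in R$ or $y/x \in R$. Assume without loss of generality the latter. Then
\[
y^{2} \;=\; (y/x)(xy) \;\in\; R \cdot P \;=\; P.
\]
The task reduces to upgrading $y^{2} \in P$ to $y \in P$, which is where the valuation hypothesis enters a second time.

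For this upgrade I would again split on whether $y \in R$ or $y^{-1} \in R$. In the first case, $y^{2} \in P$ together with primality of $P$ in $R$ immediately gives $y \in P$. In the second case, the identity $y = y^{2} \cdot y^{-1}$ places $y$ in the product $P \cdot R = P$; note that this simultaneously shows $y \in R$, so the argument is internally consistent (since $P \subseteq R$, any putative element of $P$ must lie in $R$). Either way $y \in P$, which closes the strongly prime argument; by the symmetric choice, if instead $x/y \in R$, one concludes $x \in P$.

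I do not anticipate a serious obstacle: the proof is essentially two applications of the valuation dichotomy glued together, and nothing beyond the definitions of valuation domain and prime ideal is needed. The only place where one has to be a little careful is in remembering that in the definition of strongly prime the elements $x, y$ range over $K$, not $R$, so one cannot immediately invoke primality of $P$ until after relocating the relevant element into $R$ via the valuation dichotomy; that subtlety is precisely what is handled in the second split described above.
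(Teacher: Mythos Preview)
Your proof is correct. For the second implication you do exactly what the paper does (``follows from the definition''). For the first implication the paper simply cites \cite[Proposition 1.1]{HH}, whereas you supply the elementary direct argument via the valuation dichotomy $z \in R$ or $z^{-1} \in R$; your argument is in fact essentially the content of that cited proposition, so the two approaches coincide mathematically, with yours being self-contained rather than deferred to the literature. One tiny remark: the paper \emph{defines} a valuation domain by the total ordering of ideals (Definition~\ref{d1}(2)), so strictly speaking you are invoking the standard equivalent characterization when you use ``$x/y \in R$ or $y/x \in R$''; this equivalence is classical and needs no further justification.
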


\begin{proof}
    The first statement follows from \cite[Proposition 1.1]{HH}, while the second follows from the definition.
\end{proof}

\begin{theorem}
\label{theo1}
    Let $M$ be a maximal ideal of a domain $R$. Then the following are equivalent.
        \begin{enumerate}
        \item $R$ is a PVD (respectively, an APVD).
            \item $M$ is strongly prime (respectively, strongly primary).
            \item $M$ is a prime ideal (respectively, a primary ideal) of a valuation overring of $R$.
            \item $M:M$ is a valuation domain, and $M$ (respectively, the radical ideal of $M$ in $M:M$) is the maximal ideal of $M:M$.
        \end{enumerate}
\end{theorem}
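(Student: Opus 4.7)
The plan is to establish the chain $(1)\Rightarrow(2)\Rightarrow(4)\Rightarrow(3)\Rightarrow(2)$ and close with $(2)\Rightarrow(1)$, treating the PVD and APVD cases in parallel. The implication $(1)\Rightarrow(2)$ is immediate because $M$ is itself a prime ideal of $R$, while $(4)\Rightarrow(3)$ is immediate because $V:=M:M$ is a valuation overring in which $M$ (resp.\ its radical in $V$) is by hypothesis maximal, and hence prime (resp.\ primary).

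The core step is $(2)\Rightarrow(4)$. Setting $V:=M:M$, the inclusion $M\subseteq V$ is automatic from $M\cdot M\subseteq M$. In the PVD case, for $x\in K\setminus V$ I would pick $m_{0}\in M$ with $xm_{0}\notin M$; the identity $(x^{-1})(xm_{0})=m_{0}\in M$ combined with strong primality of $M$ then forces $x^{-1}\in M\subseteq V$, so $V$ is a valuation domain. A symmetric argument shows that each $v\in V\setminus M$ has an inverse in $V$, identifying $M$ as the maximal ideal of $V$. In the APVD case the same skeleton applies, but strong primarity only yields $x^{-n}\in M$ for some $n\geq 1$, so the valuation property of $V$ and the identification of its maximal ideal as the radical of $M$ inside $V$ must be extracted by iterating on powers and tracking the radical; this is the Badawi--Houston refinement of the classical Hedstrom--Houston argument, and is the step I expect to be most delicate.

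For $(3)\Rightarrow(2)$, let $V$ be a valuation overring in which $M$ is prime (resp.\ primary), and let $x,y\in K$ with $xy\in M$. The valuation property of $V$ gives $x\in V$ or $x^{-1}\in V$; in the latter case $y=x^{-1}(xy)\in VM\subseteq M$, and in the former case, after a symmetric split on $y$, the primality (resp.\ primarity) of $M$ inside $V$ finishes the argument. Finally, $(2)\Rightarrow(1)$ is obtained by transferring the argument from $M$ to any prime $P\subseteq M$ of $R$: using the valuation overring $V$ produced in $(4)$, each such $P$ corresponds to a prime of $V$ (in the PVD case via the pullback structure $R\subseteq V$ sharing the same spectrum below $M$, in the APVD case via the analogous radical structure), so the same case split used in $(3)\Rightarrow(2)$ applied to $P$ in place of $M$ yields strong primality (resp.\ primarity) of $P$ in $R$. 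The principal obstacle is the APVD half of $(2)\Rightarrow(4)$: strong primality of $M$ immediately produces $x^{-1}\in V$ from $x\notin V$, whereas strong primarity produces only $x^{-n}\in M$, and converting this weaker information into a valuation structure on $V=M:M$ requires the careful bookkeeping of radicals that is characteristic of the APVD theory.
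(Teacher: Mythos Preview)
The paper's proof is entirely by citation: the equivalences are assembled from Anderson \cite{a79} for the PVD case and Badawi--Houston \cite{BH} for the APVD case, together with the observation \cite[Corollary 3.4]{a79} that $M:M$ being a valuation domain already forces $R$ to be quasilocal. Your proposal instead reconstructs these equivalences from first principles, which is a legitimate and more self-contained route; the paper treats the theorem as a known compilation, while your approach would make the argument readable without chasing references.

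Your outline is essentially correct for the PVD half, but there is a gap in $(2)\Rightarrow(1)$: you only treat primes $P\subseteq M$, tacitly assuming $R$ is quasilocal. This does follow from $(2)$---if $M'\neq M$ were another maximal ideal, pick $a\in M\setminus M'$ and $b\in M'\setminus M$; from $(a/b)\cdot b=a\in M$ and $b\notin M$ strong primality (resp.\ primarity, using that $b^{n}\in M$ would force $b\in M$) gives $a/b\in M$, whence $a\in bR\subseteq M'$, a contradiction---but it must be said explicitly, and the paper singles out precisely this point before citing anything else. A second issue in the same step: to rerun the $(3)\Rightarrow(2)$ case split with a non-maximal prime $P$ in place of $M$, you need $P$ to be an \emph{ideal of $V$}, not merely of $R$; your appeal to ``the pullback structure sharing the same spectrum below $M$'' names the right fact, but it is itself a theorem (essentially \cite[Theorem 2.7]{HH}) rather than something your chain has established. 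In the APVD half these two points, together with the $(2)\Rightarrow(4)$ step you already flag as delicate, are exactly what \cite[Theorem 3.4]{BH} packages.
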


\begin{proof}
Note that if $M:M$ is a valuation domain, then $R$ is quasilocal \cite[Corollary 3.4]{a79}. \\
\\
$(1)\Leftrightarrow(2)$: Follows from \cite[Proposition 3.1 and Corollary 3.6]{a79} and \cite[Theorem 3.4]{BH}.\\ 
\\
$(1)\Leftrightarrow(3)$: Follows from \cite[Proposition 3.11]{a79} and \cite[Theorem 3.4]{BH}.\\
\\
$(1)\Leftrightarrow(4)$: Follows from \cite[Theorem 2.1]{a79} and \cite[Theorem 3.4]{BH}.
\end{proof}

\begin{theo}
\label{vc1}
    An integral domain $R$ is conducive if and only if $R:V\neq (0)$ for some valuation overring $V$ of $R$. In particular, every valuation domain is conducive.
\end{theo}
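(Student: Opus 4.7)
The plan is to prove both implications separately, with the reverse implication carrying the main content.

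For the forward direction, assume $R$ is conducive. If $R=K$ then $V=R$ is trivially a valuation overring with $R:V=R\neq(0)$. Otherwise $R$ has a nonzero maximal ideal $M$, and a standard Zorn's-lemma argument (equivalently, Chevalley's extension theorem) produces a valuation overring $V$ of $R$ dominating $R_{M}$; such a $V$ has a nonzero maximal ideal, so $V\neq K$. Conducivity of $R$ then forces $V$ to be a fractional ideal, i.e., $R:V\neq(0)$.

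For the reverse direction, fix a valuation overring $V$ with nonzero $c\in R:V$, so $cV\subseteq R$. Given any overring $T$ with $T\neq K$, the plan is to show $T\subseteq V$, from which $R:T\supseteq R:V\neq(0)$ follows at once. I argue by contradiction: suppose $t\in T\setminus V$. Since $V$ is a valuation domain with quotient field $K$, we have $1/t\in M_{V}$ (the maximal ideal of $V$), equivalently $v(t)<0$ for the valuation $v$ associated with $V$. From $cV\subseteq R\subseteq T$ and $t^{k}\in T$ for every $k\geq 0$, together with $T$ being a ring, one obtains $ct^{k}V\subseteq T$ for every $k$; viewed as a fractional ideal of $V$, this set equals $\{y\in K:v(y)\geq v(c)+kv(t)\}$. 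Since the bounds $v(c)+kv(t)$ decrease without bound, $\bigcup_{k\geq 0} ct^{k}V=K$, and hence $T=K$, a contradiction. The ``in particular'' clause is immediate because any valuation domain $V$ is a valuation overring of itself with $V:V=V\neq(0)$.

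The principal obstacle is justifying $\bigcup_{k\geq 0} ct^{k}V=K$ in the contradiction step. When the value group of $v$ is Archimedean (in particular, for valuations of rank one), this is transparent: the sequence $kv(t)\to-\infty$ is cofinal in the value group, so every $v(y)$ is eventually exceeded from below by some $v(c)+kv(t)$. For valuations of higher rank, the value group contains nontrivial convex subgroups, and the multiples $kv(t)$ need not be cofinal with $-\infty$; the standard remedy is to first replace $V$ with a rank-one valuation overring of $R$ derived from $V$ (for instance, by localizing at an appropriate prime of $V$) after verifying that the conductor remains nonzero, and then to run the above argument. This reduction, and the full argument, appear in Dobbs--Fedder's original treatment of conducive domains.
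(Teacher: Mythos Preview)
The paper does not prove this result; it simply cites \cite[Theorem 3.2]{DF}. Your forward direction and the ``in particular'' clause are fine. The reverse direction, however, has a genuine gap: the plan ``show $T\subseteq V$ for every proper overring $T$'' is not merely hard to justify in higher rank---it is false. Take $R=V$ a valuation domain of rank at least $2$ and $T=V_{P}$ for a nonzero nonmaximal prime $P$; then $T$ is a proper overring with $T\supsetneq V$. Your contradiction argument, which needs $\bigcup_{k\ge 0}ct^{k}V=K$, fails in exactly such situations, and your proposed remedy of first passing to a rank-one localization of $V$ is not always available: a valuation domain need not possess a height-one prime at all (for instance, when the value group is $\bigoplus_{n\in\mathbb{N}}\mathbb{Z}$ with a lexicographic order, the proper convex subgroups form a strictly increasing chain whose union is the whole group).

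The repair is close to what you wrote but must be made $T$-dependent rather than fixed in advance. Given a proper overring $T$, set $W=TV$, an overring of $V$. Your own computation $cV\subseteq R\subseteq T$ gives $cW=cTV\subseteq T\cdot T=T$, so $W\neq K$ (else $T=K$), and hence $W=V_{Q}$ for some nonzero prime $Q$ of $V$. Then $T\subseteq V_{Q}$, and for any $0\neq q\in Q$ one has $qV_{Q}\subseteq QV_{Q}=Q\subseteq V$, whence $cqV_{Q}\subseteq cV\subseteq R$ and $0\neq cq\in R:V_{Q}\subseteq R:T$. The essential point is that the valuation overring witnessing $R:T\neq(0)$ varies with $T$; one cannot choose a single rank-one overring once and for all.
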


\begin{proof}
    This is just \cite[Theorem 3.2]{DF}.
\end{proof}

An ideal $I$ of a ring $R$ is \textit{divided} if $I\subsetneq aR$ for each $a\in R\setminus I$. A ring $R$ is \textit{divided} if each prime ideal of $R$ is divided. If $R_{M}$ is a divided ring for each maximal ideal $M$ of $R$, then we say that $R$ is \textit{locally divided}. It is easy to see that Zero-dimensional rings and one-dimensional domains are locally divided. We also have the following result due to \cite[Proposition 2.13.(1) and Theorem 4.1]{BH}.

 \begin{theo}
 \label{theo111} 
 Every APVD is divided and conducive.
 \end{theo}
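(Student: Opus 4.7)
The plan is to deduce both properties almost immediately from the structural characterization of APVDs already recorded in Theorem \ref{theo1}(4). By that theorem, $R$ is quasilocal with maximal ideal $M$, and $V:=M:M$ is a valuation overring of $R$. By definition of the colon ideal one has $VM\subseteq M$, so $M$ is in fact an ideal of $V$ as well. This single package does all the work below.

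For the divided property, let $P$ be any prime ideal of $R$ and fix $a\in R\setminus P$; I want to show $P\subsetneq aR$. Given $b\in P$, I consider the quotient $b/a\in K$, which satisfies $(b/a)\cdot a=b\in P$. Since $R$ is an APVD and $P$ is prime, $P$ is strongly primary, so either $b/a\in P$ or $a^{n}\in P$ for some $n\geq1$. The second option is excluded because $P$ is prime and $a\notin P$. Hence $b/a\in P\subseteq R$, which gives $b\in aR$. Thus $P\subseteq aR$, and strictness follows from $a\in aR\setminus P$. Since $P$ was arbitrary, $R$ is divided.

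For conduciveness, I apply Theorem \ref{vc1}, which only asks for a valuation overring $V$ of $R$ with $R:V\neq(0)$. I take $V=M:M$, which is a valuation overring by Theorem \ref{theo1}(4). The containment $VM\subseteq M\subseteq R$ means precisely that $M\subseteq R:V$. If $M=(0)$ then $R$ is a field and the statement is trivial, so assume $M\neq(0)$; then $R:V$ contains a nonzero element and Theorem \ref{vc1} concludes that $R$ is conducive.

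I do not anticipate a substantive obstacle: the content that ordinarily would require work (that $M:M$ is a valuation domain and that primes of an APVD are strongly primary) has been packaged into Theorem \ref{theo1}, after which the definitions of divided and conducive unfold directly. If anything, the only point worth care is the trivial case $M=(0)$ in the conducive argument, and verifying that the strong primary condition for $P$ (not merely $M$) is available, which it is by the definition of an APVD.
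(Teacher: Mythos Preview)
Your proof is correct. The divided argument uses the very definition of an APVD (every prime is strongly primary) applied to $x=b/a$, $y=a$, which is exactly the right move; and the conducive argument correctly invokes Theorem~\ref{vc1} with the valuation overring $V=M:M$ supplied by Theorem~\ref{theo1}(4), noting that $M\subseteq R:V$.

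The paper itself does not argue this result at all: it simply cites \cite[Proposition 2.13.(1) and Theorem 4.1]{BH}. Your approach is therefore a genuinely different (and more self-contained) route---you derive both properties from machinery already set up in the paper rather than deferring to the Badawi--Houston reference. What this buys you is independence from an external source and a transparent explanation of \emph{why} the result holds; what the citation buys the paper is brevity. One minor remark: in the conducive case you need not separate out the field case, since when $R$ is a field one may take $V=R$ (a valuation overring) and $R:V=R\neq(0)$, so Theorem~\ref{vc1} applies uniformly; but your handling is also fine.
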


Given an integral domain $R$ with quotient field $K$, $F(R)$ will denote the set of fractional ideals of $R$. Recall that the map $v:F(R)\to F(R)$ that sends $I$ to $R:(R:I)$ for each $I\in F(R)$, defines the famous $v$-operation on an integral domain $R$ \cite[Chapter 34]{G}. For each $I\in F(R)$, the image of $I$ under $v$ is denoted by $I^{v}$. This is the most famous example of so-called \textit{star operations} that proved to be extremely useful in terms of classification of integral domains, and have become a main topic of multiplicative theory of ideals since Gilmer's modern treatment of star operations \cite[Chapter 32]{G}, which is based upon Krull's work \cite{Kr}. 

If $I^{v}=I$ for a fractional ideal $I$ of $R$, then we say that $I$ is a \textit{divisorial fractional ideal} of $R$. An ideal of $R$ that is also a divisorial fractional ideal of $R$ is said to be a \textit{divisorial ideal} of $R$. A domain $R$ is said to be a \textit{divisorial domain} if every nonzero ideal of $R$ is a divisorial ideal. It is well-known that a valuation domain is a divisorial domain if and only if its maximal ideal is a divisorial ideal. In fact, we have the following.

\begin{theorem}
\label{val1}
\cite[Exercise 34.12]{G}
Let $R$ be a valuation domain with maximal ideal $N$. Then the following are equivalent.
\begin{enumerate}
    \item $N\neq N^2$.
    \item $N$ is a principal ideal of $R$.
    \item $N$ is a divisorial ideal of $R$.
    \item $R$ is a divisorial domain.
\end{enumerate}
Moreover, if $N=N^2$, then $\{aN\mid a\in K\setminus\{0\}\}$ is the set of nondivisorial ideals of $R$.
\end{theorem}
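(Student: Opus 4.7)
The plan is to translate everything into the language of the valuation $\nu\colon K^\times\to G$ associated with $R$, under which nonzero fractional ideals correspond bijectively to nonempty upper segments of the totally ordered group $G$ via $I\mapsto \nu(I\setminus\{0\})$, with the $v$-closure $I^v$ corresponding to the ``double polar'' $U^{**}$, where $U^{*}:=\{g\in G:g+u\geq 0\text{ for all }u\in U\}$. In particular, $aR$ corresponds to $[\nu(a),\infty)$ while $aN$ corresponds to the strict upper set $(\nu(a),\infty)$, and one always has $U\subseteq U^{**}$.

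The equivalence $(1)\Leftrightarrow(2)$ is a standard valuation-domain fact: any $x\in N\setminus N^2$ generates $N$ (for $y\in N$ not in $xR$ one has $x=yz$ with $z\in N$, forcing $x\in N^2$), and conversely $N=xR=x^2R$ would make $x$ a unit. The direction $(2)\Rightarrow(3)$ is immediate. For $(3)\Rightarrow(1)$ I argue the contrapositive: when $N=N^2$, every element of $N^2$ is a single product $ab$ with $a,b\in N$ (reduce a finite sum $\sum a_ib_i$ via the term of smallest valuation), so if $x\in R:N$ were outside $R$ then $x^{-1}\in N$ would factor as $ab$ and $xa=b^{-1}\in R$ would force $b$ to be a unit---a contradiction. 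Hence $R:N=R$ and $N^v=R:(R:N)=R\neq N$.

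The core step is $(2)\Rightarrow(4)$. Write $N=\pi R$ and let $g_0=\nu(\pi)$ be the smallest positive element of $G$. For a nonzero fractional ideal $I$ with $U=\nu(I\setminus\{0\})$, it suffices to check $U^{**}\subseteq U$; suppose $h\in U^{**}\setminus U$. Because $U$ is an upper segment and $h\notin U$, every $u\in U$ satisfies $u>h$, and the smallest-positive property upgrades this to the uniform gap $u\geq h+g_0$. Then any $\alpha\in K^\times$ with $\nu(\alpha)=-h-g_0$ lies in $R:I$, and picking $z\in I^v$ with $\nu(z)=h$ yields $\nu(z\alpha)=-g_0<0$, contradicting $z\alpha\in R$. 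Thus $I^v=I$, proving $(4)$; $(4)\Rightarrow(3)$ is trivial.

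For the moreover, $N=N^2$ means $G$ has no smallest positive element. Using $R:N=R$ one computes $R:aN=a^{-1}R$, whence $(aN)^v=aR\neq aN$, so each $aN$ is nondivisorial. Conversely, if $I$ is nondivisorial choose $h\in\nu(I^v)\setminus U$; the reasoning above shows $u>h$ for all $u\in U$, exhibiting $h$ as a lower bound of $U$. It is the greatest one, since any lower bound $h'$ with $h'>h$ would supply $\alpha$ with $\nu(\alpha)=-h'$ in $R:I$ and $\nu(z\alpha)=h-h'<0$, again a contradiction. Finally, any $g>h$ must lie in $U$ (else $g$ would be a larger lower bound), so $U=(h,\infty)$ and $I=aN$ for any $a\in K^\times$ with $\nu(a)=h$. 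The main obstacle is the gap-upgrade $u>h\Rightarrow u\geq h+g_0$ powering $(2)\Rightarrow(4)$: it relies crucially on the smallest-positive hypothesis $N\neq N^2$, and its failure when $G$ is order-dense is precisely what generates the nondivisorial ideals $aN$ catalogued in the moreover clause.
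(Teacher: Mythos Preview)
The paper does not actually prove this theorem; it merely records it as \cite[Exercise 34.12]{G} and uses it later. So there is no ``paper's own proof'' to compare against, and your write-up stands on its own merits.

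Your argument is correct. Translating to the value group $G$ is the natural way to handle this exercise, and each step checks out: the equivalence $(1)\Leftrightarrow(2)$ is the standard fact that $N$ is principal iff $G$ has a least positive element; the contrapositive $(3)\Rightarrow(1)$ correctly exploits that when $N=N^2$ every $c\in N$ factors as $c=c_1c_2$ with $c_1,c_2\in N$ (choose $0<\nu(c_1)<\nu(c)$, possible precisely because there is no least positive element); and the key implication $(2)\Rightarrow(4)$ hinges on the gap-upgrade $u>h\Rightarrow u\geq h+g_0$, which is exactly where the least-positive hypothesis enters. The ``moreover'' clause is also handled cleanly: you show any nondivisorial fractional ideal has value set of the form $(h,\infty)$, by first exhibiting $h\in U^{**}\setminus U$ as a lower bound of $U$ and then ruling out any strictly larger lower bound via the same $I^v$-versus-$(R{:}I)$ tension. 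One small remark: your opening sentence says nonzero fractional ideals correspond to nonempty upper segments, but strictly speaking only the \emph{bounded-below} upper segments arise this way (e.g.\ $U=G$ corresponds to $K$, which is not fractional when $R\neq K$); this does not affect the proof, since every $I$ you actually work with is fractional by hypothesis.
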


Recall that when $R$ is a PVD, there exists a unique valuation overring $V$ of $R$ such that $\textnormal{Spec}(R)=\textnormal{Spec}(V)$ as sets \cite[Theorem 2.7]{HH}. Such $V$ is called the \textit{associated valuation overring} of $R$. 

\begin{lemm}
\label{l13}
Let $R$ be a domain. Then the following are equivalent.
\begin{enumerate}
\item $R$ is an APVD.
\item $R'$ is a PVD with associated valuation overring $M:M$ for some maximal ideal $M$ of $R$.
\end{enumerate}
In particular, $\textnormal{dim}(R)=\textnormal{dim}(M:M)$ if $R$ is an APVD with maximal ideal $M$.
\end{lemm}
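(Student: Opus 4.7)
The plan is to reduce both implications to the characterization Theorem \ref{theo1}(4), which says that a quasilocal domain $(S,P)$ is APVD (resp.\ PVD) iff $P:P$ is a valuation domain whose maximal ideal is the radical of $P$ in $P:P$ (resp.\ $P$ itself). For $(1)\Rightarrow(2)$, set $V:=M:M$; by Theorem \ref{theo1}(4), $V$ is a valuation domain whose maximal ideal $N$ equals the radical of $M$ in $V$. Any $v\in N$ satisfies $v^k\in M\subseteq R$ for some $k$, hence is integral over $R$; thus $N\subseteq R'\subseteq V$ (the second inclusion since $V$ is integrally closed). So $N$ is an ideal of $R'$, and the quotient $R'/N$ sits inside the field $V/N$ while being integral over the field $R/M$, forcing it to be a field. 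Therefore $N$ is the maximal ideal of $R'$ and $R'$ is quasilocal. A direct check gives $N:N=V$ in $K$, so Theorem \ref{theo1}(4) applied to $(R',N)$ makes $R'$ a PVD, and $V$ is its associated valuation overring by uniqueness \cite[Theorem 2.7]{HH}.

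For $(2)\Rightarrow(1)$, let $N$ denote the common maximal ideal of $R'$ and $V=M:M$. Since $V=M:M$, $M$ is a $V$-ideal, so $M\subseteq N$; maximality of $M$ in $R$ yields $N\cap R=M$. The key step is to show that the radical of $M$ in $V$ equals $N$. As the primes of $V$ are totally ordered, it suffices to exclude any prime $P\subsetneq N$ of $V$ containing $M$. Such a $P$ would contract, via the identification $\Spec(V)=\Spec(R')$ from the PVD structure, to a prime $P_0\subsetneq N$ of $R'$ with $P_0\cap R\supseteq M$; maximality forces $P_0\cap R=M$, and the incomparability of primes in the integral extension $R\subseteq R'$ then makes $P_0$ maximal in $R'$, contradicting $P_0\subsetneq N$. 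Hence the radical of $M$ in $V$ is $N$. Since $V$ is a valuation domain, $R$ is quasilocal by \cite[Corollary 3.4]{a79} (as noted in the proof of Theorem \ref{theo1}), and Theorem \ref{theo1}(4) gives that $R$ is APVD.

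For the dimension statement, once the equivalence is established, $\dim R=\dim R'$ follows from integrality and $\dim R'=\dim V$ from the Spec-identification of $R'$ with its associated valuation overring, so $\dim R=\dim(M:M)$. The main obstacle is the verification that the radical of $M$ in $V$ equals $N$ in the converse direction: the containment $M\subseteq N$ is immediate, but equality of the radical with $N$ is not automatic and the argument must combine the PVD Spec-identification for $R'$ with the incomparability property of the integral extension $R\subseteq R'$.
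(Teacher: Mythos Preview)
Your argument is correct and follows the same overall strategy as the paper, reducing both directions to Theorem~\ref{theo1}(4). The one substantive difference is in $(1)\Rightarrow(2)$: the paper invokes \cite[Proposition~3.7]{BH} to obtain at once that $R'$ is a PVD and then identifies $M:M$ as its associated valuation overring via \cite[Lemma~1.6]{HH} and \cite[Theorem~17.6]{G}, whereas you reconstruct this conclusion by hand---showing $N\subseteq R'$ via integrality, checking $N:N=V$, and applying Theorem~\ref{theo1}(4) to $(R',N)$. That self-contained route is fine, but you have one elision: from ``$R'/N$ is a field'' you only get that $N$ is \emph{a} maximal ideal of $R'$, not that $R'$ is quasilocal. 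To close this, observe that since $N\subseteq R'$ and $N$ is the radical of $M$ in $V$, $N$ is also the radical of $M$ in $R'$; every maximal ideal of $R'$ lies over $M$ (lying-over for the integral extension $R\subseteq R'$, $R$ being quasilocal) and hence contains that radical, so $N$ is the unique maximal ideal. Your $(2)\Rightarrow(1)$ is essentially the paper's argument, carried out in $V$ via the $\textnormal{Spec}$-identification rather than first in $R'$; both hinge on the same incomparability step for the integral extension $R\subseteq R'$.
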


\begin{proof}
(1)$\Rightarrow$(2): Let $R$ be an APVD with maximal ideal $M$. By \cite[Proposition 3.7]{BH}, $R'$ is a PVD, $M$ is an ideal of $R'$, and the radical of $M$ in $R'$ is the maximal ideal of $R'$. Let $N$ (respectively, $N_{0}$) denote the maximal ideal of $R'$ (respectively, $M:M$). Then $N_{0}$ is the radical of $M$ in $M:M$ by Theorem \ref{theo1}, so $M\subseteq N_{0}\cap R'$. It follows that $N\subseteq N_{0}\cap R'$, and we have $N=N_{0}\cap R'$ since $N_{0}\cap R$ is a prime ideal of $R'$. Now $N$ is a prime ideal of $M:M$ \cite[Lemma 1.6]{HH}. It follows that $N$ is the maximal ideal of $M:M$. Since $M:M$ and $N:N$ are valuation overrings of $R'$ with same maximal ideal $N$ by Theorem \ref{theo1}, we must have $N:N=M:M$ \cite[Theorem 17.6]{G}. Hence $M:M$ is the associated valuation overring of $R'$ \cite[Theorem 2.7]{HH}.\\
\\
(2)$\Rightarrow$(1): Suppose that (2) holds. Since $R'$ is a quasilocal domain, so is $R$. Since $M:M$ is a valuation domain, it follows that $R'\subseteq M:M$ \cite[Theorem 10.4]{Mat} and $M$ is an ideal of $R'$. Let $P$ be the radical of $M$ in $R'$. Since $R'$ is a PVD, the set of prime ideals of $R'$ is totally ordered under inclusion \cite[Corollary 1.3]{HH}, and $P$ is a prime ideal of $R'$. Thus $P\cap R=M$, and $P$ must be the maximal ideal of $R'$. Since $M:M$ is the associated valuation overring of $R'$, $P$ is the maximal ideal of $M:M$ by the comment preceding this lemma. Hence, 
the radical of $M$ in $M:M$ is $P$, and $R$ must be an APVD by Theorem \ref{theo1}.\\
\\
The remaining assertion now follows from the fact that $\textnormal{dim}(R)=\textnormal{dim}(R')$.
\end{proof}

\textit{From now on, if $R$ is an APVD, then $M$ (respectively, $N$) will denote the maximal ideal of $R$ (respectively, $R'$), and $V$ the associated valuation overring of $R'$. In this case, we will also call $V$ the associated valuation overring of $R$.} 


\begin{coro}
\label{c13}
Let $R$ be an APVD.
\begin{enumerate}
\item $R$ is a PVD if and only if $M=N$.
\item $R$ is a valuation domain if and only if $R=V$.
\item Either $R=V$, or $M=R:V$ is a divisorial ideal of $R$.
\end{enumerate}
\end{coro}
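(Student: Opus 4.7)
The plan is to lean on Lemma~\ref{l13} and Theorem~\ref{theo1}(4), together with the identification $V=M:M$ from the italicized convention preceding the statement, to reduce everything to elementary manipulations with the conductor $R:V$. Throughout, the pivotal facts are that $V$ is itself a valuation domain and that $N$ is its maximal ideal.

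Part (1) is then immediate from Theorem~\ref{theo1}(4): since $R$ is an APVD, the hypothesis that $M:M$ be a valuation domain is already satisfied, so the only remaining condition is that $M$ equals the maximal ideal of $V$, which is $N$. Part (2) is equally direct: $V$ is by construction a valuation domain, so $R=V$ instantly forces $R$ to be valuation; conversely, if $R$ is a valuation domain, a short check gives $M:M=R$ (the reverse inclusion: if $x\in K\setminus R$ then $x^{-1}\in M$, so $xM\subseteq M$ would entail $1=x\cdot x^{-1}\in M$), hence $V=R$.

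For part (3), I would assume $R\neq V$ and establish $M=R:V$ by two inclusions. On one hand, $V=M:M$ gives $VM\subseteq M\subseteq R$, yielding $M\subseteq R:V$. On the other, $R:V$ is a proper ideal of $R$: it lies inside $R$ by evaluating at $1\in V$, and the equality $R:V=R$ would force $1\in R:V$ and hence $V\subseteq R$, contradicting the assumption. Since $R$ is quasilocal with maximal ideal $M$, this forces $R:V\subseteq M$. Divisoriality of $M$ then follows from the standard fact that $(R:J)^v=R:J$ for any nonzero fractional ideal $J$, applied to $J=V$.

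The only real piece of bookkeeping—and essentially the only obstacle in the proof—is ensuring $R:V\neq 0$, which uses the conducive property of APVDs (Theorem~\ref{theo111}) together with $V\neq K$; the latter follows from $\dim(V)=\dim(R)\geq 1$ via Lemma~\ref{l13}, after excluding the degenerate case $R=K$ (which would contradict $R\neq V$). Once these corner cases are handled, the conductor argument runs cleanly and all three parts fall out.
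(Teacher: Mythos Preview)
Your proposal is correct and follows essentially the same route as the paper for part~(3); your arguments for (1) and (2) are minor variations (you invoke Theorem~\ref{theo1}(4) directly for (1), and give an elementary computation of $M:M=R$ for (2), whereas the paper uses the strongly-prime characterization and then cites \cite[Theorem~17.6]{G}). One simplification worth noting: your ``only real piece of bookkeeping'' is unnecessary. Once you have established $M=R:V$ by the two inclusions, the nonvanishing $R:V\neq 0$ is automatic, since $R\neq V$ forces $R\neq K$ (as $R\subseteq V\subseteq K$) and hence $M\neq 0$; there is no need to invoke conduciveness or to argue about $\dim(V)$.
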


\begin{proof}
(1): If $M=N$, then $M$ is strongly prime by Lemma \ref{l13}, and $R$ is a PVD. Conversely, if $R$ is a PVD, then $M$ is strongly prime. Since $M$ is a proper ideal of $R'$, it must be a prime ideal of $R'$. Since $M$ is an $N$-primary ideal of $R'$, we must have $M=N$.\\
\\
(2): If $R$ is a valuation domain, then $R$ is a PVD, so $M=N$ by (1). Since $N$ is the maximal ideal of $V$, we have $R=V$ \cite[Theorem 17.6]{G}. The converse is obvious.\\
\\
(3): We have $M\subseteq R:V$ since $M$ is an ideal of both $R$ and $V$. Suppose that $R\neq V$. Then $R:V$ is a proper ideal of $R$, so $R:V\subseteq M$ and $M=R:V$.
On the other hand, $R$ is a divisorial ideal of $R$. Therefore $M=R:V$ is a divisorial ideal of $R$ \cite[Theorem 34.1.(3)]{G}. 
\end{proof}

Given rings $A,B,C$ with unital ring homomorphisms $u:A\to C$ and $v:B\to C$, where $v$ is surjective,  the \textit{pullback} of $u$ with respect to $v$ , denoted by $A\times_{C}B$, is the ring $\{(a,b)\in A\times B\mid u(a)=v(b)\}$. The pullback of our interest in this paper is the case when $B$ is an integral domain, $C=B/I$ for some ideal $I$ of $B$, $A$ is a subring of $C$, and $u$ (respectively, $v$) is the canonical inclusion (respectively, canonical projection). In this case, $D=A\times_{C}B$ is an integral domain, $B$ is an overring of $D$, and $B^{*}=D^{*}$ \cite[Lemma 1.1.4.(10)]{fhp}. Note that such $D$ can be identified to the ring $v^{-1}(A)$ \cite[Lemma 1.1.4.(11)]{fhp}.
The notion of a pullback of a domain was proved to be immensely useful, in terms of producing examples and proving theorems. For instance, it is well-known that every PVD arises from a pullback of valuation domains. Precisely, an integral domain $R$ is a PVD if and only if $R=L\times_{V/M} V$ for some valuation domain $V$ with maximal ideal $M$ and a subfield $L$ of $V/M$ \cite[Proposition 2.6]{ad}. Motivated by this result, we present a pullback characterization of APVDs in the following proposition.

\begin{prop}
\label{pullback1}
Let $R$ be an integral domain. Then $R$ is an APVD if and only if there exists a valuation domain $V$ with maximal ideal $N$, an $N$-primary ideal $M$ and a field $L\subseteq V/M$ such that 
$R=L\times_{V/M}V$ is a pullback
. In this case, $M$ is the maximal ideal of $R$, $V=M:M$ and $L$ is the residue field of $R$.
\end{prop}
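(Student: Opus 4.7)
The plan is to prove both directions of the biconditional and then separately verify the three structural identities in the ``In this case'' clause.

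For the forward implication, I would take $R$ an APVD with maximal ideal $M$ and set $V := M:M$. By Lemma~\ref{l13} together with Theorem~\ref{theo1}, $V$ is a valuation overring of $R$ whose maximal ideal $N$ equals the radical of $M$ in $V$, so $M$ is $N$-primary. Setting $L := R/M$, the composition $R \hookrightarrow V \twoheadrightarrow V/M$ has kernel $M$ and image $L \subseteq V/M$; this identifies $R$ with the preimage of $L$ under $V \to V/M$, that is, with the pullback $L \times_{V/M} V$.

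For the converse, I identify $R$ with $\pi^{-1}(L) \subseteq V$, where $\pi : V \to V/M$ is the projection, and observe that $M$ is an ideal of $R$ with $R/M \cong L$ a field, hence a maximal ideal. The quotient fields of $R$ and $V$ coincide: if $M = 0$, then $V = R = L$ is a field and the claim is trivial; otherwise, for any nonzero $m_0 \in M$ and any $v \in V$, $vm_0 \in M \subseteq R$, so $v = (vm_0)/m_0$ lies in the quotient field of $R$. It then suffices to prove $M$ is strongly primary, after which Theorem~\ref{theo1} gives that $R$ is an APVD. To verify this, take $x, y \in K$ with $xy \in M$. The valuation property of $V$ yields three cases: if $x \notin V$, then $x^{-1} \in N$ and $y = (xy)x^{-1} \in MV \subseteq M$; if $x \in V \setminus N$, then $x^{-1} \in V$ and again $y = (xy)x^{-1} \in M$; if $x \in N \setminus M$, then $y \in V$ (otherwise $y^{-1} \in V$ would yield $x = (xy)y^{-1} \in M$, a contradiction), and the $N$-primariness of $M$ in $V$ gives $y^n \in M$ for some $n$.

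For the structural claims, $M$ being the unique maximal ideal of $R$ and $L$ being the residue field follow from the above together with quasi-locality of APVDs. The equality $V = M:M$ is the main technical obstacle: the inclusion $V \subseteq M:M$ is immediate since $M$ is a $V$-module, and for the reverse, now that $R$ is known to be an APVD, Lemma~\ref{l13} makes $M:M$ a valuation overring of $V$, hence a localization $V_P$ of $V$ at some prime $P \subseteq N$. The maximal ideal $PV_P$ of $V_P$ contracts to $P$ in $V$; but by Theorem~\ref{theo1} it also equals the radical of $M$ in $M:M$, whose contraction to $V$ is the radical of $M$ in $V$, namely $N$. Thus $P = N$, forcing $M:M = V_N = V$.
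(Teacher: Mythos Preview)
Your proof is correct. The forward direction is essentially the paper's argument made explicit: the paper observes $R:V=M$ via Corollary~\ref{c13}(3) and cites an external reference for the pullback identification, while you write out the preimage identification $R=\pi^{-1}(L)$ directly.

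The converse and the structural claim $V=M{:}M$ are where your route genuinely diverges. The paper dispatches the converse in one line by invoking Theorem~\ref{theo1}(3): once $V$ is seen to be an overring of $R$, the hypothesis already says $M$ is a primary ideal of the valuation domain $V$, so $R$ is an APVD immediately. Your three-case verification that $M$ is strongly primary is correct but amounts to reproving part of Theorem~\ref{theo1} by hand. Likewise, for $V=M{:}M$ the paper simply cites \cite[Theorem~4.2.6]{fhp}, while you give a self-contained argument using that overrings of a valuation domain are localizations and comparing the contraction of the maximal ideal. Your arguments buy independence from external references at the cost of length; the paper's buy brevity by leaning on Theorem~\ref{theo1} and the literature. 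Both are sound, but you could shorten your converse considerably by noting that Theorem~\ref{theo1}(3) applies directly.
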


\begin{proof}
Suppose that $R$ is an APVD, and let $L=R/M$.  
By the comment preceding this Proposition, we can assume that $R$ is not a PVD. Then $R:V=M$ by Corollary \ref{c13}, so $R$ is the pullback of the given form (cf. \cite[Theorem 1]{bdf86}). 

Conversely, assume that $R$ is a pullback of the given form, and let $\phi':R\to L$ the ring homomorphism induced by the canonical projection map $\phi:V\to V/M$. Then $M=ker(\phi')$ is an ideal of $R$. In fact, $M$ is the maximal ideal of $R$ since $L\cong R/ker(\phi')$. It follows that $V$ is an overring of $R$, and $R$ is an APVD by Theorem \ref{theo1}. Note that $M:M=V$ \cite[Theorem 4.2.6]{fhp}.
\end{proof}

Recall that an integral domain is said to be a \textit{Mori domain} if it satisfies the ascending chain condition on divisorial ideals. Our next goal is to classify Mori APVDs. We first need the following lemma.

\begin{lemm}
\label{step4}
Let $R$ be an APVD such that $N$ is a divisorial fractional ideal of $R$. Then we have the following.
\begin{enumerate}
    \item Each nonzero ideal of $V$ is a divisorial fractional ideal of $R$. 
    \item $J^{v}=JV$ for each nonprincipal ideal $J$ of $R$. 
\end{enumerate}
\end{lemm}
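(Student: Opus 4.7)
The plan is to handle (1) by reducing $R$-divisorial closure on $V$-ideals to a simpler operation and then splitting into cases according to Theorem~\ref{val1}; (2) will follow from (1) combined with the observation that for non-principal $J$ the module $R:J$ is automatically a $V$-module.

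First I would show that for every $V$-submodule $U$ of $K$, one has $R:U = M:U$, and in particular $R:U$ is itself a $V$-submodule of $K$. The key point is that $M$ is the largest $V$-submodule of $R$: any $V$-module $W\subseteq R$ satisfies $W=VW\subseteq R$, so $W\subseteq R:V=M$ (using Corollary~\ref{c13}(3) when $R\neq V$; the case $R=V$ is trivial throughout). Applied with $W=xU$, this gives $R:U=M:U$. The same circle of ideas yields $R:M=V$, so that $V$ itself is $R$-divisorial: for $x\in R:M$ with $R\neq V$, if $xM=R$ then $M=x^{-1}R$ would be principal and $V=M:M=R$ would follow, a contradiction; hence $xM\subseteq M$ and $x\in M:M=V$.

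For (1), I would now split into cases. If $I=\alpha V$ is principal in $V$, then $R:(R:I)=\alpha(R:M)=\alpha V=I$ directly. Otherwise $I$ is not principal in $V$, so Theorem~\ref{val1} forces $N=N^{2}$ and identifies the non-$V$-divisorial ideals as exactly $\{aN \mid a\in K\setminus\{0\}\}$. For $I=aN$ the hypothesis $R:(R:N)=N$ immediately gives $R:(R:aN)=aN$. Any remaining non-principal $V$-divisorial ideal can be written as an intersection of principal $V$-fractional ideals (the standard presentation of a divisorial ideal), each of which is $R$-divisorial by the first subcase; since an intersection of $R$-divisorial ideals is $R$-divisorial, this case also goes through.

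For (2), the crucial observation is that for a non-principal ideal $J$ of $R$, one has $R:J=M:J$, which is a $V$-module. Indeed, for $y\in R:J$ the $R$-ideal $yJ\subseteq R$ must lie in $M$: if $yJ=R$ then $J=y^{-1}R$ would be principal, contradicting the hypothesis. Once $R:J$ is known to be a $V$-module, $JV\cdot(R:J)=J\cdot V(R:J)=J(R:J)\subseteq R$ yields $JV\subseteq R:(R:J)=J^{v}$, while (1) applied to the $V$-ideal $JV$ gives $(JV)^{v}=JV$, hence $J^{v}\subseteq JV$. I expect the main obstacle to be the non-principal $V$-divisorial subcase of (1), which combines the presentation of divisorial ideals as intersections of principal fractional ideals with the closure of divisorial ideals under intersection.
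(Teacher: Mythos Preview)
Your argument for (2) is essentially the paper's: from $J$ nonprincipal you get $xJ\subseteq M$ for each $x\in R:J$, hence $R:J\subseteq R:JV$, and then (1) closes the loop via $J^{v}\subseteq (JV)^{v}=JV\subseteq J^{v}$.

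Your route to (1), however, is genuinely different. The paper dispatches (1) in one line by quoting two external results: since $N$ is $R$-divisorial one has $I^{v}\subseteq N:(N:I)$, and for a valuation domain with maximal ideal $N$ one always has $N:(N:I)=I$ (the $m$-canonical property of $N$). You instead build everything from $R:V=M$ and $R:M=V$ and then run a case analysis on the $V$-ideal $I$. This is more self-contained and makes the role of the hypothesis on $N$ transparent (it is invoked only for the $aN$ case), at the price of a longer argument; the paper's version is slicker but leans on outside references.

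One correction: the inference ``$I$ is not principal in $V$, so Theorem~\ref{val1} forces $N=N^{2}$'' is false. A valuation domain with $N$ principal (equivalently $N\neq N^{2}$) can certainly have nonprincipal ideals; take any valuation domain of Krull dimension $\geq 2$ with principal maximal ideal. Fortunately this slip is not load-bearing. What Theorem~\ref{val1} actually gives you is that every non-$V$-divisorial ideal is of the form $aN$ (and such ideals exist only when $N=N^{2}$). So the correct dichotomy is simply: either $I$ is $V$-divisorial, in which case your intersection-of-principal-$V$-fractional-ideals argument applies (the principal case being the base of that argument), or $I$ is not $V$-divisorial, hence $I=aN$ and the hypothesis $N^{v}=N$ finishes it. Rewriting your case split along these lines removes the erroneous claim and leaves a clean proof.
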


\begin{proof}
If $I$ is an ideal of $V$, then $I^{v}\subseteq N:(N:I)$ \cite[Proposition 1.20]{Pd}. On the other hand, $N:(N:I)=I$ \cite[Proposition 4.1]{BHLP}. Thus $(1)$ follows. For (2), we adapt the proof of \cite[Proposition 2.14]{HH}. Choose a nonprincipal ideal $J$ of $R$. Notice that if $x\in R:J$, then $xJ\subseteq R$, and $xJ\neq R$ since $J$ cannot be a principal ideal of $R$. Hence, $xJ\subseteq M$, and thereby $xJV\subseteq MV=M\subseteq R$ implies $x\in R:JV$. Thus, $R:J\subseteq R:JV$. Since $JV$ is a divisorial fractional ideal of $R$ by (1), we then have $JV=(JV)^{v}\subseteq J^{v}$. Since $J^{v}\subseteq (JV)^{v}=JV$, (2) follows.
\end{proof}


\begin{coro}
\label{moria}
Let $R$ be an integral domain that is not a field. Then the following are equivalent.
\begin{enumerate}
\item $R$ is a Mori APVD.
\item $M:M$ is a DVR for some maximal ideal $M$ of $R$.
\end{enumerate}
\end{coro}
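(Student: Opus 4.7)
The plan is to treat the two implications separately, with the identity $R:M=V$ (where $V:=M:M$) serving as the common bridge.

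For $(2)\Rightarrow(1)$: if $V$ is a DVR with maximal ideal $N$, then the nonzero proper ideal $M$ of $V$ automatically has radical $N$, so Theorem \ref{theo1}(4) yields that $R$ is an APVD (and the remark preceding Theorem \ref{theo1} ensures $R$ is quasilocal). If $R=V$ the conclusion is immediate, so I assume $R\neq V$, in which case Corollary \ref{c13}(3) gives $M=R:V$. I would first establish $R:M=V$ by a maximal-ideal argument: setting $X=R:M$, the $R$-ideal $MX$ satisfies $M\subseteq MX\subseteq R$, and maximality of $M$ forces either $MX=M$ (whence $X\subseteq M:M=V$) or $MX=R$, the latter making $M$ invertible and hence principal in the quasilocal $R$, which forces $V=M:M=R$, a contradiction. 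With $R:M=V$ in hand, a direct computation gives $N^{v}=N$, so Lemma \ref{step4} applies: every nonzero ideal of $V$ is divisorial in $R$, and $J^{v}=JV$ for each nonprincipal $R$-ideal $J$. To check ACC on divisorial ideals I would take an ascending chain $J_1\subseteq J_2\subseteq\cdots$, track the ideals $J_iV=\pi^{m_i}V$ in the DVR $V$ (so $(m_i)$ stabilizes at some $m$), and then argue that within this fixed valuation class the chain cannot grow further: a principal $R$-ideal cannot contain the $V$-ideal $\pi^{m}V$ (else $uR\supseteq V$ for a unit $u$ of $V$, forcing $R=V$), so once a nonprincipal $J_i=\pi^{m}V$ appears the chain freezes, and while all $J_i$ remain principal $a_{i+1}/a_i$ is a unit of $V$ lying in $R$ and hence a unit of $R$ (since $R\cap N=M$), making the chain constant.

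For $(1)\Rightarrow(2)$: if $R$ is itself a valuation domain, then $R$ is a Mori valuation domain and must be a DVR, for otherwise $R$ would contain a non-principal ideal, from which one extracts elements with strictly decreasing valuations and hence an infinite strictly ascending chain of principal (and so divisorial) ideals, contradicting Mori. If instead $R\neq V$, the same maximal-ideal argument as above yields $R:M=V$. Supposing for contradiction that $V$ is not a DVR, $V$ is non-Noetherian and contains a non-principal ideal, which after multiplication by a nonzero element of $M$ may be assumed to lie in $M$; non-principality then supplies elements $\pi_k\in M$ with strictly decreasing valuations, so $\pi_kV\subseteq M\subseteq R$ is a strictly ascending chain of $V$-ideals. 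The computation
\[
R:(\pi_kV)=\pi_k^{-1}(R:V)=\pi_k^{-1}M,\qquad R:(\pi_k^{-1}M)=\pi_k(R:M)=\pi_kV
\]
shows each $\pi_kV$ is divisorial in $R$, yielding an infinite strictly ascending chain of divisorial ideals and contradicting the Mori hypothesis.

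The main obstacle I anticipate is the identity $R:M=V$: it is the bridge that converts Mori-ness of $R$ into the DVR property for $V$ and back, and the decisive input is ruling out the invertibility of $M$ in the quasilocal APVD $R$, which is precisely what prevents $V$ from collapsing to $R$.
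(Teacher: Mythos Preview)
Your argument is correct, with one small slip: in the principal tail of the chain for $(2)\Rightarrow(1)$ you write $a_{i+1}/a_i$, but the element that lies in $R$ is $a_i/a_{i+1}$ (from $a_iR\subseteq a_{i+1}R$); since it is a unit of $V$ and $R\cap N=M$, it is a unit of $R$, and the chain freezes.

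For $(2)\Rightarrow(1)$ your route is essentially the paper's: show $N$ is divisorial in $R$, invoke Lemma~\ref{step4} to see that every divisorial ideal of $R$ is either principal or a $V$-ideal, and then check ACC on each type. The paper reaches $N^{v}=N$ more quickly by noting that $M=a^{n}V$ is divisorial (Corollary~\ref{c13}(3)) and $N=a^{1-n}M$ is a scalar multiple thereof, whereas you compute $N^{v}$ directly via the identity $R:M=V$; both are fine. The paper's treatment of the chain is terser (principal ACC from ``nonunits of $R$ are nonunits of $V$'', nonprincipal ACC from $V$ Noetherian), while you spell out the transition between the two cases explicitly.

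For $(1)\Rightarrow(2)$ you take a genuinely different path. The paper argues: $M$ is divisorial in the Mori domain $R$ (Corollary~\ref{c13}(3)), hence $M:M$ is Mori by Barucci's result \cite[Corollary~11]{b86}, and a Mori valuation domain is a DVR. You instead avoid the external citation and argue directly: from $R:M=V$ and $R:V=M$ you compute $(\pi_kV)^{v}=\pi_kV$, so any strictly ascending chain of principal $V$-ideals inside $M$ is already a strictly ascending chain of divisorial $R$-ideals, contradicting Mori. Your approach is more self-contained and elementary; the paper's is shorter but leans on a general transfer theorem for the Mori property along $I:I$.
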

\begin{proof}
(1)$\Rightarrow$(2): Suppose that $R$ is a Mori APVD with maximal ideal $M$. Then $M:M$ is a valuation domain by Proposition \ref{pullback1}. Since a Mori valuation domain is a DVR, we only need to show that $M:M$ is a Mori domain. If $M:M=R$, then we have nothing to prove. Suppose that $M:M\neq R$. 
Then $M$ is a divisorial ideal of $R$ by Corollary \ref{c13}.(3). Hence $M:M$ is a Mori domain \cite[Corollary 11]{b86}.

(2)$\Rightarrow$(1): Suppose that (2) holds, and let $N$ be the maximal ideal of $V=M:M$. Then the radical of $M$ in $V$ is $N$, so $R$ must be an APVD with maximal ideal $M$ by \ref{theo1}. It also follows that $N=aV$ and $M=a^{n}V$ for some $a\in N$ and $n\in\mathbb{N}$. If $R=V$, then $R$ is clearly a Mori domain. Assume that $R\neq V$. Then by Corollary \ref{c13}.(3), $M$ is a divisorial ideal of $R$, and so is $V$. Hence $N$ is a divisorial fractional ideal of $R$. Now,
let $\{I_{\alpha}\}_{\alpha\in \mathcal{A}}$ be a set of ascending chain of divisorial ideals of $R$. By Lemma \ref{step4}, $I_{\alpha}$ is either a principal ideal of $R$ or an ideal of $V$ for each $\alpha\in \mathcal{A}$. Note that $R$ must satisfy the ascending chain condition on principal ideals since every nonunit of $R$ is a nonunit of $V$, while $R$ has the ascending chain property on nonprincipal divisorial ideals since $V$ is a Noetherian ring. Hence, the chain $\{I_{\alpha}\}_{\alpha\in \mathcal{A}}$ must be stationary, and $R$ is Mori.
\end{proof}

\begin{lemm}
\label{mc1}
Let $R$ be an APVD that is not a field. Then the following hold.
\begin{enumerate}
    \item $R^{*}$ is a one-dimensional valuation domain.
    \item $R^{*}$ is the associated valuation overring of $R$ if and only if $R$ is one-dimensional. 
    \item $R$ is Mori if and only if the associated valuation overring of $R$ is a DVR. In this case $R^{*}$ is the associated valuation overring of $R$.
\end{enumerate}
\end{lemm}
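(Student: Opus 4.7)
The plan is to reduce everything to the identity $R^{*}=V^{*}$, where $V=M:M$ is the associated valuation overring of $R$, and then read off (1)--(3) from standard valuation-theoretic facts together with Corollary~\ref{moria}. Since $R$ is not a field, $M\neq(0)$, and Corollary~\ref{c13}(3) yields $R:V=M\neq(0)$ in the non-trivial case $R\neq V$; either way $V$ is an overring of $R$ that is also a fractional ideal of $R$. Combined with the pullback description $R=L\times_{V/M}V$ of Proposition~\ref{pullback1}, the standard fact that complete integral closure is preserved in such pullbacks \cite[Lemma 1.1.4(10)]{fhp} yields $R^{*}=V^{*}$.

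For (1), $V^{*}$ is an overring of the valuation domain $V$ inside $K$, hence itself a valuation overring (overrings of valuation domains are again valuation domains, being localizations at primes). The classical description of the complete integral closure of a valuation domain then identifies $V^{*}$ with the valuation overring whose value group is $\Gamma/H$, where $\Gamma$ is the value group of $V$ and $H\subseteq\Gamma$ is the convex subgroup of infinitesimal elements; because $\Gamma/H$ is archimedean by construction, $V^{*}$ has rank one. Hence $V^{*}$, and therefore $R^{*}$, is a one-dimensional valuation domain.

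Parts (2) and (3) are then essentially formal. For (2): $R^{*}=V$ is equivalent to $V=V^{*}$, i.e.\ to $V$ being completely integrally closed, which for a valuation domain is equivalent to being one-dimensional; combined with $\textnormal{dim}(R)=\textnormal{dim}(V)$ from Lemma~\ref{l13}, this is in turn equivalent to $\textnormal{dim}(R)=1$. For (3), Corollary~\ref{moria} states that $R$ is Mori if and only if $V=M:M$ is a DVR, and a DVR is a one-dimensional valuation domain, so the ``in this case'' assertion $R^{*}=V$ is immediate from (2). The only delicate point in the whole argument is the rank-one computation in (1); everything else is a formal consequence of the reduction $R^{*}=V^{*}$ and the results already proved in the paper.
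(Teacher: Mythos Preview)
Your proof is correct and follows the same overall architecture as the paper: both reduce to $R^{*}=V^{*}$ via the pullback description and \cite[Lemma~1.1.4(10)]{fhp}, and both handle (2) and (3) exactly as you do, via Lemma~\ref{l13} and Corollary~\ref{moria}. The one substantive difference is in the justification of (1). The paper does not compute the value group of $V^{*}$ directly; instead it invokes Theorem~\ref{theo111} to note that $R$ is conducive, then appeals to \cite[Corollary~6]{gh66} to conclude that $R^{*}$ is completely integrally closed, and finally uses \cite[Theorem~17.5(3)]{G} to deduce that a completely integrally closed valuation domain has rank one. Your route---identifying $V^{*}$ with the localization at the prime corresponding to the convex subgroup $H$ of bounded elements, and observing that $\Gamma/H$ is archimedean---bypasses the conduciveness detour and stays entirely inside valuation theory; this is arguably cleaner and more self-contained, though the archimedean claim for $\Gamma/H$ (which you flag as ``the only delicate point'') does require a short argument (if $\alpha\notin H$ then at most one $n$ can satisfy $n\alpha-\beta\in H$, else $\alpha\in H$). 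In effect you are reproving the relevant part of \cite[Theorem~17.5]{G} directly rather than reaching it through the Gilmer--Heinzer result on conducive domains.
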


\begin{proof}
Let $M$ be the maximal ideal of $R$, $L=R/M$ and $V=M:M$ the associated valuation overring of $R$. Then $R$ is a pullback of $L\times_{V/M}V$ by Proposition \ref{pullback1}. In particular, $R$ and $V$ have the same complete integral closure \cite[Lemma 1.1.4.(10)]{fhp}. Since $R$ is conducive by Theorem \ref{theo111}, $R^{*}$ is completely integrally closed \cite[Corollary 6]{gh66}. Moreover, $R^{*}$ is a valuation domain since it is an overring of a valuation domain $V$. Now (1) and (2) both follow from \cite[Theorem 17.5.(3)]{G}. The first assertion of (3) is an immediate consequence of Corollary \ref{moria}, while the second one then follows from (2) and Lemma \ref{l13}. 
\end{proof}

Following \cite[Definition 2.4]{av96}, a nonunit element $a$ of a ring $R$ is \textit{irreducible} if for each $b,c\in R$ such that $a=bc$, either $aR=bR$ or $aR=cR$.
A ring $R$ is \textit{atomic} if every nonzero nonunit of $R$ can be written in at least one way as a finite product of irreducible elements of $R$. A domain that satisfies the ascending chain condition on principal ideals is atomic, but the converse fails in general \cite{gr74}. Our next result shows that in an APVD, these two properties actually coincide.

\begin{coro}
\label{accp0}
The following are equivalent for an integral domain $R$.
\begin{enumerate}
    \item $R$ is a Mori PVD.
    \item $R$ is a PVD that satisfies the ascending chain condition on principal ideals.
    \item $R$ is an atomic PVD.
    \item There exists a maximal ideal $M$ of $R$ such that $M:M$ is a DVR whose maximal ideal is $M$.
\end{enumerate}
\end{coro}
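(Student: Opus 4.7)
The natural approach is a cyclic chain $(1)\Rightarrow(2)\Rightarrow(3)\Rightarrow(4)\Rightarrow(1)$. The first two implications are essentially formal: if $R$ is a Mori domain, then since every principal ideal is divisorial, the ACC on divisorial ideals forces the ACC on principal ideals, giving (2); and the standard fact that ACCP implies atomicity yields (3).

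The implication $(4)\Rightarrow(1)$ is also quick using machinery already in place. If $M:M$ is a DVR, then $R$ is a Mori APVD by Corollary \ref{moria}. Since the maximal ideal $M$ of $R$ is assumed to coincide with the maximal ideal of $V=M:M$, and $V$ is the associated valuation overring of $R'$ by Lemma \ref{l13}, one has $\textnormal{Spec}(R')=\textnormal{Spec}(V)$, so the maximal ideal $N$ of $R'$ equals $M$. Corollary \ref{c13}.(1) then gives that $R$ is a PVD.

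The heart of the argument is $(3)\Rightarrow(4)$. Assume $R$ is an atomic PVD with maximal ideal $M$; by Theorem \ref{theo1}, $V:=M:M$ is a valuation overring whose maximal ideal is $M$ itself, so the goal is to show $V$ is a DVR. Since a valuation domain is a DVR precisely when its maximal ideal is not idempotent, I plan to derive a contradiction from the assumption $M=M^{2}$. The crucial observation is that in any valuation domain with idempotent maximal ideal, the value group has no smallest positive element, so for every $a\in M$ there exists $b\in V$ with $0<v(b)<v(a)$; setting $c=a/b$ yields a factorization $a=bc$ with $b,c\in M\subseteq R$, both of which are nonunits of $R$. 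If $a$ were irreducible in $R$, this would force $aR=bR$ or $aR=cR$, and in either case one of $b,c$ would have to be a unit of $R$, contradicting its membership in $M$. Hence no element of $M$ is irreducible in $R$; but atomicity applied to any nonzero $a\in M$ produces irreducible factors, all of which are nonunits and therefore lie in the quasilocal maximal ideal $M$, a contradiction. So $M\neq M^{2}$ and $V$ is a DVR with maximal ideal $M$.

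The main obstacle is precisely this step inside the valuation domain: one needs the structural fact that $M=M^{2}$ forces every element of $M$ to split as a product of two elements of $M$, rather than just as a sum of such products, so that the PVD's irreducibility condition can be brought to bear. Once this valuation-theoretic factorization is in hand, the PVD hypothesis (which makes $M$ itself the maximal ideal of $V$) converts the obstruction into an honest failure of atomicity in $R$.
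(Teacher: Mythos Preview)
Your argument for $(3)\Rightarrow(4)$ contains a genuine gap. The assertion ``a valuation domain is a DVR precisely when its maximal ideal is not idempotent'' is false: a rank-two valuation domain with value group $\mathbb{Z}\times\mathbb{Z}$ ordered lexicographically has maximal ideal generated by any element of value $(0,1)$, so $M\neq M^{2}$, yet it is not a DVR. Your contradiction argument correctly shows that $M=M^{2}$ would leave $R$ with no irreducible elements, hence $M\neq M^{2}$; but this only yields that $M$ is principal in $V$, not that $V$ is discrete rank one.

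To close the gap you must use atomicity once more. From $M\neq M^{2}$ the value group of $V$ has a least positive element $\gamma$, and the same factorization argument you gave shows that the irreducible elements of $R$ are \emph{exactly} those $a\in M$ with $v(a)=\gamma$ (any $a$ with $v(a)>\gamma$ factors as $a=bc$ with $v(b)=\gamma$ and $v(c)=v(a)-\gamma>0$). Atomicity then forces every nonzero $a\in M$ to satisfy $v(a)\in\mathbb{N}\gamma$, so the value group is $\mathbb{Z}\gamma\cong\mathbb{Z}$ and $V$ is a DVR. With this addition your direct argument is complete; the paper itself does not argue directly but simply invokes \cite[Theorem 5.1 and Corollary 5.2]{am92} for $(3)\Rightarrow(4)$ and Corollary~\ref{moria} for $(4)\Rightarrow(1)$, so your elementary route (once repaired) is a genuine alternative that avoids the external reference.
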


\begin{proof}
We only need to show (3)$\Rightarrow$(4)$\Rightarrow$(1), which follows from \cite[Theorem 5.1 and Corollary 5.2]{am92} and Corollary \ref{moria}.
\end{proof}


\begin{prop}
\label{accp1}
Let $R$ be an integral domain that is not a PVD. Then the following are equivalent. 
\begin{enumerate}
    \item $R$ is an APVD that satisfies the ascending chain condition on principal ideals.
    \item $R$ is an atomic APVD. 
    \item $M:M$ is a one-dimensional valuation domain for some maximal ideal $M$ of $R$.
\end{enumerate} 
\end{prop}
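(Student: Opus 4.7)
The plan is to prove the cycle $(1) \Rightarrow (2) \Rightarrow (3) \Rightarrow (1)$. The implication $(1) \Rightarrow (2)$ is standard, since ACCP on principal ideals forces every nonzero nonunit to be a product of irreducibles. For the other two implications, the key translation is Lemma \ref{l13}: once $R$ is an APVD, $V := M:M$ is a valuation overring of $R$ with $\dim R = \dim V$. Combined with Corollary \ref{c13}(1), the hypothesis that $R$ is not a PVD forces $M \subsetneq N$, so $M$ is an $N$-primary ideal of $V$ strictly smaller than $N$.

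For $(2) \Rightarrow (3)$, I argue by contradiction: assume $V$ has Krull dimension at least $2$ and pick a prime $P$ of $V$ with $0 \subsetneq P \subsetneq N$. Because the ideals of $V$ are totally ordered by inclusion, $P \subsetneq M$ (otherwise $M \subseteq P$ would give $N = \sqrt{M} \subseteq P$, impossible), so $P$ is an ideal of $R$ that is prime in both $V$ and $R$. The crux is to show that no $u \in P \setminus \{0\}$ is irreducible in $R$. Let $v \colon V \setminus \{0\} \to G$ be the valuation, and let $H \subsetneq G$ be the convex subgroup corresponding to $P$, so that $v(P \setminus \{0\}) = \{g \in G_{>0} : g > H\}$ and every element of $M \setminus P$ has value in $H_{>0}$. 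Fix any $b \in M \setminus P$. For $u \in P \setminus \{0\}$, the element $c := u/b \in V$ satisfies $v(c) = v(u) - v(b) > H$ (since $H$ is a subgroup and $v(u) > H$), so $c \in P \subseteq M$. Thus $u = bc$ in $R$ with both factors nonunits, and $u$ is reducible. Finally, atomicity applied to any nonzero $a \in P$ yields $a = u_1 \cdots u_k$ with each $u_i$ irreducible; primality of $P$ in $R$ forces some $u_i \in P \setminus \{0\}$, contradicting the reducibility just established.

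For $(3) \Rightarrow (1)$, suppose $V = M:M$ is a one-dimensional valuation domain with maximal ideal $N$. Since $M$ is nonzero and proper in $V$, its radical is the unique nonzero prime $N$, so $M$ is $N$-primary and Theorem \ref{theo1} implies $R$ is an APVD; since $R$ is not a PVD, $M \subsetneq N$. Fix a rank-one valuation $v \colon V \setminus \{0\} \to G \subseteq \mathbb{R}$ and set $\Gamma_M = v(M \setminus \{0\})$. Because $\Gamma_M$ is upward-closed in $G_{>0}$ but strictly contained in $v(N \setminus \{0\}) = G_{>0}$, a short argument shows $\epsilon := \inf \Gamma_M > 0$ (otherwise upward-closure forces $\Gamma_M = G_{>0}$, making $M = N$). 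For a strict chain $a_1 R \subsetneq a_2 R \subsetneq \cdots$ of nonzero principal ideals, writing $a_i = r_i a_{i+1}$ with $r_i$ a nonunit of $R$ gives $r_i \in M$ and hence $v(r_i) \geq \epsilon$, whence $v(a_1) \geq (n-1)\epsilon$ and $n$ is bounded. The main obstacle is the factorization step in $(2) \Rightarrow (3)$: producing the divisor $b \in M \setminus P$ with $v(b) \in H_{>0}$ is where the strict inclusions $0 \neq H \subsetneq G$ (from rank $\geq 2$) and $P \subsetneq M$ (from $M$ being $N$-primary but not prime) are both essential.
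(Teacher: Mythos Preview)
Your proof is correct, but it takes a different route from the paper for both nontrivial implications.

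For $(2)\Rightarrow(3)$, the paper invokes the fact that every APVD is divided (Theorem~\ref{theo111}) and then cites \cite[Proposition~19]{b99}, which shows that an atomic divided domain has Krull dimension at most one; combined with Lemma~\ref{l13} this gives $\dim(M:M)=\dim R\le 1$. Your argument is instead a direct, self-contained valuation computation: you locate a nonmaximal nonzero prime $P\subsetneq M$ of $V$, then use the convex subgroup $H$ attached to $P$ to factor any $u\in P$ as $u=bc$ with $b\in M\setminus P$ and $c\in P$, contradicting atomicity. This avoids the external citation and makes the role of the strict inclusion $M\subsetneq N$ (needed to get $b$) fully explicit.

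For $(3)\Rightarrow(1)$, the paper picks $a\in N\setminus M$, observes $M\subseteq aV$, and then bounds a strict principal chain by $f_1\in\bigcap_i a^iV=(0)$, quoting \cite[Theorem~17.1.(3)]{G} for the last intersection. Your argument embeds the rank-one value group in $\mathbb{R}$, shows $\epsilon=\inf v(M\setminus\{0\})>0$ from $M\subsetneq N$, and bounds chain length by $v(a_1)/\epsilon$. The two arguments are essentially equivalent reformulations of the same archimedean fact about rank-one valuations; yours is again more self-contained, while the paper's is slightly slicker in that it does not need to name the embedding into $\mathbb{R}$.

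In short: the paper outsources both steps to the literature (divided domains for one, Gilmer for the other), whereas you reprove what is needed from scratch via value-group computations. Both are clean; yours is better suited to a reader without those references at hand.
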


\begin{proof}
(1)$\Rightarrow$(2) is well-known as mentioned in the comment preceding Corollary \ref{accp0}. Since every APVD is divided by Theorem \ref{theo111},  (2)$\Rightarrow$(3) follows from Lemma \ref{l13} and \cite[Proposition 19]{b99}. Suppose that $R$ satisfies (3). Then $M$ is a nonzero ideal of a one-dimensional valuation domain $M:M$, so  $R$ is an APVD by Theorem \ref{theo1}. By Corollary \ref{c13} and our assumption that $R$ is not a PVD, $M\neq N$. Thus there exists $a\in N\setminus M$, and we have $M\subseteq aV$. Suppose that there exists a strictly ascending chain $(f_{1})\subsetneq (f_{2})\subsetneq\cdots$ of nonzero principal ideals of $R$. Then $f_{i}\in f_{i+1}M\subseteq af_{i+1}V$ for each $i\in\mathbb{N}$. Since $V$ is one-dimensional,  $f_{1}\in \bigcap\limits_{i\in\mathbb{N}}a^{i}V=(0)$ \cite[Theorem 17.1.(3)]{G}, a contradiction. Hence $R$ must satisfy the ascending chain condition on principal ideals. 
\end{proof}

\begin{prop}
\label{omori}
Let $R$ be a Mori PVD with maximal ideal $M$. Then the following are equivalent. \begin{enumerate}
    \item Every overring of $R$ is Mori.
    \item $R'$ is a valuation domain.
    \item Every overring of $R$ is a PVD.
\end{enumerate}
\end{prop}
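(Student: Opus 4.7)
The plan is to pin down the overrings of $R$ using the structural results at our disposal. Corollary~\ref{accp0} gives that $V=M:M$ is a DVR with, say, uniformizer $\pi$ and maximal ideal $M=\pi V$, and Proposition~\ref{pullback1} realizes $R$ as the pullback $(R/M)\times_{V/M}V$. Let $\phi\colon V\to V/M$ be the projection. A short reduction-mod-$M$ computation shows that the integral closure $R'$ equals $K_0+M$, where $K_0$ is the algebraic closure of $R/M$ inside $V/M$. Since $R'\subseteq V$ and $V$ is a DVR, $R'$ is a valuation domain if and only if $R'=V$, equivalently $V/M$ is algebraic over $R/M$; this is the form in which I would use~(2).

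For $(2)\Rightarrow(1)$ and $(2)\Rightarrow(3)$, I would assume $V/M$ is algebraic over $R/M$ and, given an overring $T$ of $R$, split on whether $T\subseteq V$. If $T\subseteq V$, then $M\subseteq T$ and $T/M$ is a subring of $V/M$ containing $R/M$; the algebraicity assumption together with the standard fact that a domain integral over a field is a field forces $L:=T/M$ to be a subfield, and $T=L+M$ follows from $\phi(T)=L$ together with $\ker\phi=M\subseteq T$. Corollary~\ref{accp0} then shows $T$ is a Mori PVD since $M:M=V$ remains a DVR with maximal ideal $M$. If $T\not\subseteq V$, pick $t\in T\setminus V$; because $V$ is a valuation overring we have $1/t\in M\subseteq R\subseteq T$, so $T\supseteq R[t]$. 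Since $V$ is integral over $R$, it is integral over $R[t]$; and $V[t]=K$ because $t=1/(1/t)$ with $1/t$ a nonzero element of the maximal ideal of the DVR $V$. Thus $K$ is integral over the domain $R[t]$, so $R[t]=K$, whence $T=K$, a trivial Mori PVD.

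For $(1)\Rightarrow(2)$ and $(3)\Rightarrow(2)$, I would argue by contrapositive. Assuming~(2) fails, I pick $\bar t\in V/M$ transcendental over $R/M$ and lift to $t\in V$ (automatically a unit of $V$), and set $T=R[t]$. A reduction-mod-$M$ argument identifies $T=\phi^{-1}((R/M)[\bar t])$, so $T/M=(R/M)[\bar t]$ is a polynomial ring over the field $R/M$, in particular not a field; hence $T$ is not quasilocal and so cannot be a PVD, refuting~(3). To refute~(1), I would exhibit an ascending chain of principal ideals of $T$ that fails to stabilize, which then rules out Mori via the standard implication Mori\,$\Rightarrow$\,ACCP. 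The candidate chain is $(\pi)T\subseteq(\pi/t)T\subseteq(\pi/t^2)T\subseteq\cdots$: each $\pi/t^n$ lies in $M\subseteq T$ because $t$ is a unit of $V$; the inclusion at step $n$ follows from $\pi/t^n=t\cdot(\pi/t^{n+1})$ with $t\in T$; and equality at some step would force $1/t\in T$, whereas $\phi(1/t)=1/\bar t$ does not lie in $(R/M)[\bar t]$, since $\bar t$ is not a unit in a polynomial ring over a field. The main obstacle is verifying these three properties of the chain---that it lies in $T$, that it ascends, and that it does so strictly---after which everything reduces to routine bookkeeping against the pullback description and Corollary~\ref{accp0}.
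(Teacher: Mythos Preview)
Your argument is correct, and it takes a genuinely different route from the paper's proof.

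The paper dispatches the three implications by citing external results: for $(1)\Rightarrow(2)$ it invokes \cite[Theorem~3.4]{bd84} (every overring Mori forces $R'$ to be Dedekind) together with \cite[Theorem~1.7]{HH} (every integral overring of a PVD is quasilocal with maximal ideal $M$) to conclude $R'$ is a DVR; for $(2)\Rightarrow(1)$ it uses a comparability result from \cite{Gi} saying that every overring of $R$ is comparable to $R'$, so any proper overring is either $K$ or integral over $R$, and then applies Corollary~\ref{accp0}; and $(2)\Leftrightarrow(3)$ is simply \cite[Corollaire~1.4]{F}. Your proof instead unpacks the pullback description $R=(R/M)\times_{V/M}V$ to classify the overrings directly: under (2) you show each overring is either $L+M$ for an intermediate field $L$ (hence a Mori PVD by Corollary~\ref{accp0}) or equals $K$; and when (2) fails you build an explicit overring $T=R[t]$ with $t$ lifting a transcendental element, then exhibit a strictly ascending chain of principal ideals $(\pi/t^n)T$ to kill ACCP and hence Mori, while the non-quasilocality of $T$ (via $T/M\cong (R/M)[\bar t]$) kills the PVD property.

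What you gain is self-containment: your argument uses only the internal machinery already set up in the paper (Theorem~\ref{theo1}, Corollaries~\ref{c13} and \ref{accp0}, Proposition~\ref{pullback1}) and elementary facts about integrality and DVRs, whereas the paper's proof leans on three outside references. The paper's version is shorter and situates the result within the broader literature, but yours makes the mechanism transparent---in particular, your explicit identification $R'=K_0+M$ and the concrete ACCP-violating chain are informative in their own right.
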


\begin{proof}
(1)$\Rightarrow$(2): Note that if every overring of $R$ is Mori, then $R'$ is a Dedekind domain \cite[Theorem 3.4]{bd84}. Since every integral overring of $R$ is a PVD with maximal ideal $M$ \cite[Theorem 1.7]{HH}, $R'$ must be a DVR. \\
\\
(2)$\Rightarrow$(1): If $R'$ is a valuation domain, then by Lemmas \ref{l13} and \ref{mc1}, $R'=M:M$ is a DVR with maximal ideal $M$. Let $T$ be an overring of $R$. If $T=R'$ or $T=K$, then it is clearly a Mori domain. Suppose not. Since every overring of $R$ is comparable to $R'$ \cite[Proposition 1.27 and Theorem 1.31]{Gi}, $T$ must be an integral overring of $R$. On the other hand, $M$ is the maximal ideal of both $R$ and $R'$ by Theorem \ref{theo1}, so $M$ is also the maximal ideal of $T$. From Corollary \ref{accp0} it then follows that $T$ is Mori.\\
\\
(2)$\Leftrightarrow$(3): \cite[Corollaire 1.4]{F}.
\end{proof}

\section{TAF-domains, Mori domains and locally pseudo-valuation domains}

In 2007, Badawi \cite{Badawi} generalized the notion of a prime ideal as follows. Let $I$ be an ideal of a ring $R$. We say that $I$ is a \textit{2-absorbing ideal} of $R$ if for any $a_{1},a_{2}, a_{3}\in R$ such that $a_{1}a_{2}a_{3}\in I$, either $a_{1}a_{2}\in I$, $a_{1}a_{3}\in I$ or $a_{2}a_{3}\in I$. It is easy to see that every prime ideal is 2-absorbing, but the converse fails, for the product of two distinct maximal ideals of a ring is 2-absorbing \cite[Theorem 2.6]{Anderson}, but not prime. In \cite{Anderson}, Anderson and Badawi generalized this concept further by defining an ideal $I$ of a ring $R$ to be an \textit{n-absorbing ideal} of $R$ if for any $a_{1},\dots, a_{n+1}\in R$ such that $a_{1}\cdots a_{n+1}\in I$, there exists $i\in \{1,\dots, n+1\}$ such that $\prod\limits_{j\in\{1,\dots, n+1\}\setminus \{i\}}a_{j}\in I$. Given a proper ideal $I$ of a ring $R$, the minimal $n\in\mathbb{N}$ such that $I$ is $n$-absorbing is denoted by $\omega_{R}(I)$, and we set $\omega_{R}(I) = \infty$ when $I$ is not $n$-absorbing for any $n \in \mathbb{N}$. From now on, we will call a ring $R$  \textit{finite-absorbing} if $\omega_{R}(I)\in\mathbb{N}$ for every proper ideal $I$ of $R$.

A ring in which every proper ideal is a finite product of prime ideals is said to be a \textit{general ZPI-ring} \cite[Chapter 39]{G}. On the other hand, Mukhtar et.al. \cite{mad18} considered rings in which each proper ideal is a finite product of 2-absorbing ideals, and called such rings \textit{TAF-rings}. It follows that every general ZPI-ring is a TAF-ring. The main portion of this section consists of various ring-theoretic properties of \textit{TAF-domains}, i.e.,  TAF-rings that are also  integral domains. For instance, it is well-known that an integral domain is a general ZPI-ring (in fact, a Dedekind domain) if and only if it is a Noetherian Pr{\"u}fer domain, and in Proposition \ref{tafmori} we show that a similar criterion holds for TAF-domains. Using this criterion, we prove the structure theorem for TAF-rings analogous to \cite[Theorem 39.2]{G} in section 4.

Recall that an 
integral domain $R$ is a \textit{locally pseudo-valuation domain}, or an $LPVD$ in short, if $R_{M}$ is a PVD for each maximal ideal $M$ of $R$ \cite[Proposition 2.2]{df83}. An integral domain in which each nonzero nonunit is contained in only finitely many maximal ideals is of \textit{finite character}. We begin with a collection of useful facts concerning Mori domains.

\begin{theorem}
\label{theo34}
Let $R$ be an integral domain.
\begin{enumerate}
\item $R$ is a Mori domain if and only if for each nonzero ideal $I$ of $R$, there exists a finitely generated ideal $J$ of $R$ such that $J\subseteq I$ and $R:I=R:J$.
    \item If $R$ is a  one-dimensional Mori domain, then $R$ is of finite character.
    \item If $R$ is Mori, then $R_{S}$ is Mori for each multiplicatively closed subset $S$ of $R$.
    \item Suppose that $R$ is of finite character. Then $R$ is Mori if and only if it is locally Mori.
\end{enumerate}
\end{theorem}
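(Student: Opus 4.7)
The plan is to prove (1) first since it underlies (3) and (4), then handle (3) and (4) together, and finally (2), which relies on independent machinery.

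For (1), the forward direction uses ACC on divisorial ideals directly. Given a nonzero ideal $I$, I would consider the family $\{J^{v}:J\subseteq I\text{ finitely generated}\}$ of divisorial ideals of $R$. Since it is directed under sums, ACC furnishes a maximal element $J_{0}^{v}$; for any finitely generated $J\subseteq I$, the ideal $(J+J_{0})^{v}$ lies in the family and contains $J_{0}^{v}$, so equals it, forcing $J^{v}\subseteq J_{0}^{v}$ and equivalently $R:J_{0}\subseteq R:J$. Combined with $R:I=\bigcap_{J} R:J$ (taken over finitely generated $J\subseteq I$) and $R:J_{0}\supseteq R:I$, this gives $R:J_{0}=R:I$. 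For the converse, given an ascending chain $\{I_{n}\}$ of divisorial ideals with union $I$, the hypothesis produces a finitely generated $J\subseteq I$ with $R:J=R:I$; since $J$ is finitely generated it sits in some $I_{N}$, and the sandwich $R:I\subseteq R:I_{n}\subseteq R:I_{N}\subseteq R:J=R:I$ for $n\ge N$ collapses each $R:I_{n}$ to $R:I$, so $I_{n}=I_{n}^{v}=I^{v}$ for large $n$.

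For (3) I would use that (1) localizes: given a nonzero ideal $\mathfrak{I}$ of $R_{S}$, set $I=\mathfrak{I}\cap R$, pick a finitely generated $J\subseteq I$ realizing $R:I=R:J$ via (1), and use the standard identities $R_{S}:J R_{S}=(R:J)_{S}$ (for finitely generated $J$) and $(R:I)_{S}=R_{S}:\mathfrak{I}$ (directly verified by clearing denominators against generators of $J$) to exhibit $JR_{S}$ as a witness in $R_{S}$, so (1) makes $R_{S}$ Mori. For (4), the forward direction is (3); for the converse, assume $R$ is locally Mori with finite character. Given a nonzero ideal $I$, pick a nonzero $d\in I$ and let $M_{1},\dots,M_{n}$ be the finitely many maximal ideals containing $d$. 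For each $i$, apply (1) inside $R_{M_{i}}$ to $IR_{M_{i}}$, and after clearing denominators from $R\setminus M_{i}$, obtain a finitely generated $J_{i}\subseteq I$ with $R_{M_{i}}:J_{i}=R_{M_{i}}:IR_{M_{i}}$. Set $J=dR+J_{1}+\cdots+J_{n}\subseteq I$, which is finitely generated. To verify $R:I=R:J$ I would test $R=\bigcap_{M}R_{M}$ locally: for $M\notin\{M_{1},\dots,M_{n}\}$ the element $d$ is a unit in $R_{M}$, so $xd\in R$ puts $x\in R_{M}$; for $M=M_{i}$ the inclusion $xJ_{i}\subseteq R$ gives $x\in R_{M_{i}}:IR_{M_{i}}$. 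Intersecting over all maximals yields $xI\subseteq R$, and (1) finishes.

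For (2) I would invoke the classical Mori-theoretic fact that the set of prime divisors of a nonzero principal ideal in any Mori domain is finite (a result in the Querré--Barucci tradition that can be cited from the references already in play). In a one-dimensional domain every nonzero prime is maximal and no embedded components appear, so these prime divisors of $aR$ are exactly the maximal ideals containing $a$; finite character is immediate. The main obstacle I expect is the converse of (4): concocting a single global finitely generated witness from purely local data. What makes the gluing go through is the dual role of the fixed element $d\in I$, which both coerces $R:J$ into a genuine fractional ideal and, via finite character, reduces the testing of the colon identity to finitely many localizations.
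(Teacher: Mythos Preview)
Your arguments for (1), (3), and (4) are correct and are essentially the standard direct proofs; for (2) you defer to the literature. The paper takes a different route: Theorem~\ref{theo34} is presented as a compendium of known facts, and the paper's proof consists entirely of pointers to the literature---\cite[Proposition 2.6.11]{e19} for (1), \cite[Lemma 3.11]{gr19} for (2), \cite[Proposition 3.3.25]{e19} for (3), and \cite[Theorem 4.7]{Mat} together with \cite[Corollary 5]{z88} for (4). So your approach is strictly more self-contained: you reconstruct the arguments behind three of the four citations rather than invoking them. What this buys you is independence from the external sources; what it costs is space and the need to be careful about the colon--localization identities in (3) and the gluing step in (4), both of which you handle correctly. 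For (2) you and the paper are on the same footing, each appealing to a known finiteness result; the paper's specific reference is Gabelli--Roitman \cite[Lemma 3.11]{gr19}, which you may wish to cite explicitly rather than gesturing at the ``Querr\'e--Barucci tradition.''
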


\begin{proof}
(1): This is well-known. For instance, see \cite[Proposition 2.6.11]{e19}.\\
\\
(2): This is exactly \cite[Lemma 3.11]{gr19}.\\
\\
(3): \cite[Proposition 3.3.25]{e19}.\\
\\
(4): By (3), every Mori domain is locally Mori. The converse follows from \cite[Theorem 4.7]{Mat} and \cite[Corollary 5]{z88}.
\end{proof}

\begin{theorem}
\label{bh96t}
Let $R$ be an integral domain. 
\begin{enumerate}
\item If $R$ is a Mori domain and $R:R^{*}\neq (0)$, then $R^{*}$ is a Krull domain. Moreover, $R$ has Krull dimension 1 if and only if $R^{*}$ is a Dedekind domain.
\item If $R$ is a seminormal Mori domain that has Krull dimension 1, then $R^{*}$ is a Dedekind domain.
\item If $R^{*}$ is a Krull domain, then $(R_{S})^{*}=(R^{*})_{S}$ for each multiplicatively closed subset $S$ of $R$.
\end{enumerate}
\end{theorem}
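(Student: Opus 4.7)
The plan is to handle the three parts separately, in each case reducing to known structural results about Mori domains and their complete integral closures. I expect the argument to be essentially a chain of citations to the literature (particularly to Barucci and to Nishimura), with the main creative effort concentrated in part (2).

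For part (1), I would first invoke the conductor theorem for Mori domains: when $R$ is Mori and the conductor $(R:R^{*})$ is nonzero, then $R^{*}$ is itself a Mori domain, in analogy with the passage from $R$ to $M:M$ in \cite[Corollary 11]{b86} that was used in the proof of Corollary \ref{moria}. Since $R^{*}$ is completely integrally closed in this situation, Nishimura's theorem---that a completely integrally closed Mori domain is a Krull domain---then yields that $R^{*}$ is Krull. For the ``moreover'' clause, note that $R \subseteq R^{*}$ preserves Krull dimension in this setting (the contraction map $\Spec(R^{*}) \to \Spec(R)$ hits every prime of $R$ because the conductor is nonzero), and a one-dimensional Krull domain is precisely a Dedekind domain.

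For part (2), the strategy is to reduce to part (1) by verifying that the conductor hypothesis $(R:R^{*}) \neq (0)$ is automatic in the seminormal one-dimensional Mori case. Here I would invoke the Barucci-Houston characterization of seminormal Mori domains: seminormality forces the integral closure $R'$ and the complete integral closure $R^{*}$ to coincide, and in dimension one the Mori hypothesis combined with this identification is enough to guarantee that the conductor $(R:R^{*})=(R:R')$ is nonzero. Once this is in place, part (1) delivers that $R^{*}$ is Dedekind.

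Part (3) is a standard localization computation. Since $R^{*}$ is Krull, the localization $(R^{*})_{S}$ is again Krull and therefore completely integrally closed; as an overring of $R_{S}$, it then contains $(R_{S})^{*}$. The reverse inclusion follows from the observation that almost-integrality over $R$ is preserved under localization at $S$, so every element of $(R^{*})_{S}$ is almost integral over $R_{S}$ and hence lies in $(R_{S})^{*}$. The main obstacle will be part (2): extracting from seminormality the identification $R' = R^{*}$ together with the nonvanishing of the conductor, which is the substantive content of the Barucci-Houston structure theorem. Parts (1) and (3) become essentially bookkeeping once the relevant Mori-domain and Krull-domain machinery is in hand.
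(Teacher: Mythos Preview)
Your treatment of parts (1) and (3) is essentially what the paper does: the paper's proof is a bare list of citations (\cite[Corollary 18]{b86}, \cite[Corollary 3.4]{bh96}, \cite[Theorem 2.9]{b93}, \cite[Lemma 3.1]{bh96}), and your sketches for (1) and (3) correctly unpack the content behind those references.

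Part (2), however, has a real gap. Your plan is to reduce to part (1) by arguing that seminormality forces $R'=R^{*}$ and that the conductor $(R:R^{*})$ is then nonzero. Both steps fail. First, seminormality of a one-dimensional Mori domain does \emph{not} force $R'=R^{*}$: take $R=F+XL[X]$ with $F\subsetneq L$ a transcendental field extension. By Proposition~\ref{coro56}(5) and Proposition~\ref{tafmori} this is a seminormal Mori domain of Krull dimension one, yet $R^{*}=L[X]$ while $R'\subsetneq L[X]$ (an element $\ell\in L$ is integral over $R$ only if it is algebraic over $F$). Second, and more fatally, the conductor hypothesis $(R:R^{*})\neq(0)$ can fail outright for seminormal one-dimensional Mori domains. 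The paper itself notes (just before Proposition~\ref{nagata2}) that a Noetherian LPVD need not be a GPVD \cite[Example 3.4]{df83}; combined with Lemma~\ref{66}, this produces a Noetherian (hence seminormal Mori, one-dimensional) LPVD with $(R:R^{*})=(0)$. So part (2) genuinely cannot be deduced from part (1), and the Barucci result \cite[Theorem 2.9]{b93} that the paper cites proceeds by a different route, working directly with the divisorial structure of seminormal Mori domains rather than through a conductor argument.
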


\begin{proof}
\cite[Corollary 18]{b86}, \cite[Corollary 3.4]{bh96}, \cite[Theorem 2.9]{b93} and \cite[Lemma 3.1]{bh96}.
\end{proof}

The following characterization is well-known.

\begin{theo}
\label{dd}
Let $R$ be an integral domain that is not a field. Then the following are equivalent.
\begin{enumerate}
    \item $R$ is a Dedekind domain.
    \item $R$ is of finite character, and $R_{M}$ is a DVR for each maximal ideal $M$ of $R$.
    \item $R$ is a Noetherian Pr{\"u}fer domain.
    \item $R$ is an integrally closed Noetherian locally conducive domain.
\end{enumerate}
\end{theo}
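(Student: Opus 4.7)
My plan is to organize the proof as the classical cycle $(1) \Rightarrow (2) \Rightarrow (3) \Rightarrow (1)$ together with the separate equivalence $(1) \Leftrightarrow (4)$, which contains the only nonclassical content. The three classical implications can be assembled from Gilmer: $(1) \Leftrightarrow (3)$ is the Noetherian--Pr\"ufer characterization of Dedekind domains \cite[Theorem 37.1]{G}; $(1) \Rightarrow (2)$ uses that a Dedekind domain is Noetherian (so nonzero ideals have only finitely many minimal primes, giving finite character) with DVR localizations at maximal ideals; and $(2) \Rightarrow (3)$ uses that locally DVR implies both Pr\"ufer and locally Noetherian, while finite character plus locally Noetherian yields Noetherian globally.

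The direction $(1) \Rightarrow (4)$ is immediate: a Dedekind domain is Noetherian and integrally closed by definition, and each $R_M$ is a DVR, hence a valuation domain, which is conducive by Theorem \ref{vc1}; so $R$ is locally conducive.

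The main obstacle is $(4) \Rightarrow (1)$. My plan is to reduce to a local claim: since $R$ is Noetherian and integrally closed, each $R_M$ is Noetherian, integrally closed, quasilocal, and conducive, and the key lemma I need is that such an $R_M$ is either a field or a DVR. Granting this, condition $(2)$ immediately finishes the proof (finite character comes from $R$ being Noetherian). The key lemma in turn reduces to the statement that a Noetherian conducive quasilocal domain has Krull dimension at most one, since a Noetherian integrally closed quasilocal domain of dimension one is already a DVR. For this dimension bound I plan to cite the theory of Noetherian conducive domains developed in \cite{DF}.

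If a direct proof of the dimension bound is desired, I would proceed by contradiction: assume $S = R_M$ has dimension at least two, and by Theorem \ref{vc1} pick a valuation overring $V$ with $S:V \neq 0$ and a nonzero $c \in S:V$. Setting $Q = N_V \cap S$, if $Q$ is strictly smaller than the maximal ideal of $S$ then any $s \in M_S \setminus Q$ satisfies $1/s \in V$, so $c/s^n \in S$ for every $n$, contradicting Krull's intersection theorem; hence $V$ must dominate $S$. In that case, analyzing $cV \subseteq S$ for a DVR $V$ gives powers of the uniformizer $\pi$ lying in $S$, and integral closedness of $S$ forces $\pi \in S$; pushing this further to a full collapse $V = S$ is the delicate step, and is precisely where I would prefer to invoke \cite{DF} rather than grind through the remaining algebra.
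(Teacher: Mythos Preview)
Your proposal is correct. The classical cycle $(1) \Leftrightarrow (2) \Leftrightarrow (3)$ matches the paper's approach (both ultimately rest on Gilmer's Theorems 37.1 and 37.2), and your $(1) \Rightarrow (4)$ is the same as the paper's $(3) \Rightarrow (4)$. For the nontrivial direction out of $(4)$, the paper takes a slightly more direct route than yours: rather than first establishing a dimension bound for Noetherian conducive local domains and then invoking the characterization of DVRs as one-dimensional integrally closed Noetherian local rings, the paper uses integral closure at the outset, observing that $R_M$ is a Krull domain (being Noetherian and integrally closed, \cite[Theorems 12.1, 12.4]{Mat}) and then applying \cite[Corollary 2.5]{DF}, which says that a conducive Krull domain is a DVR. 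This packages the dimension bound and the DVR conclusion into a single citation and avoids isolating the dimension step. Your sketched direct proof of the dimension bound runs into trouble at ``for a DVR $V$'': the valuation overring $V$ supplied by Theorem \ref{vc1} need not be discrete, so the uniformizer argument does not apply as stated; but since you already plan to cite \cite{DF} for this step rather than rely on the sketch, the overall plan is sound.
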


\begin{proof}
(1)$\Leftrightarrow$(2): \cite[Theorem 37.2]{G}.\\
\\
(1)$\Leftrightarrow$(3): \cite[Theorem 37.1]{G}.\\
\\
(3)$\Rightarrow$(4): Let $R$ be a Noetherian Pr{\"u}fer domain. Then $R$ is integrally closed \cite[Theorem 26.2]{G}, and $R_{M}$ is a DVR for each maximal ideal $M$ of $R$ by the equivalence of (2) and (3). Since every valuation domain is conducive by Theorem \ref{vc1}, $R$ is locally conducive.\\
\\
(4)$\Rightarrow$(2): Suppose that (4) holds. 
Let $M$ be a maximal ideal of $R$. Then $R_{M}$ is a conducive Krull domain \cite[Theorems 12.1, 12.4]{Mat}, which is a DVR \cite[Corollary 2.5]{DF}. Consequently, $R$ is a one-dimensional Noetherian domain, and must be of finite character.
\end{proof}

We have an analogous characterization for TAF-domains. 

\begin{prop}
\label{tafmori}
Let $R$ be an integral domain. Then the following are equivalent.
\begin{enumerate}
    \item $R$ is a TAF-domain.
    \item $R$ is of finite character, and $R_{M}$ is a Mori PVD for each maximal ideal $M$ of $R$.
    \item $R$ is a Mori LPVD.
    \item $R$ is a seminormal Mori locally conducive domain.
\end{enumerate}
In particular, $R$ has Krull dimension at most 1.
\end{prop}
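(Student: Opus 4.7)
The plan is to establish the implications via the cycle $(2)\Leftrightarrow(3)$, $(3)\Rightarrow(4)\Rightarrow(2)$, and $(1)\Leftrightarrow(3)$, then read off the Krull dimension bound at the end. The equivalence $(2)\Leftrightarrow(3)$ is almost immediate from Theorem \ref{theo34}: in the forward direction, $R_M$ is Mori by Theorem \ref{theo34}(3) and PVD by hypothesis, so a Mori PVD; Corollary \ref{accp0}(4) then forces each $R_M$ to be one-dimensional, and Theorem \ref{theo34}(2) supplies finite character whenever $R$ is not a field. The converse combines Theorem \ref{theo34}(4) with the local PVD condition. For $(3)\Rightarrow(4)$, every PVD is an APVD and hence conducive by Theorem \ref{theo111}, so $R$ is locally conducive. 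Seminormality is a local property, so it suffices to verify it in a Mori PVD $R_M$. Using Proposition \ref{pullback1} together with Corollary \ref{accp0}(4), one realizes $R_M=L\times_{V/MR_M}V$ with $V=MR_M:MR_M$ a DVR whose maximal ideal is $MR_M$ and $L$ the residue field; any $x\in K$ with $x^2,x^3\in R_M$ lies in the integrally closed $V$, and its residue $\bar{x}\in V/MR_M$ satisfies $\bar{x}^2,\bar{x}^3\in L$, forcing $\bar{x}\in L$ and hence $x\in R_M$.

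For $(4)\Rightarrow(2)$, which is the first substantial step, fix a maximal ideal $M$ and set $T=R_M$; by Theorem \ref{theo34}(3) the ring $T$ remains quasilocal, seminormal, Mori, and conducive. Theorem \ref{vc1} produces a valuation overring $V$ of $T$ with $T:V\ne 0$, and since $V\subseteq T^*$ one obtains $T:T^*\ne 0$; Theorem \ref{bh96t}(1) then makes $T^*$ a Krull overring, which being quasilocal is either a field (in which case $T$ itself is a field) or a DVR. In the non-field case $T$ is one-dimensional, Theorem \ref{bh96t}(2) reconfirms the Dedekind-to-DVR structure, and a conductor argument identifies $T^*$ with $MT:MT$; Corollary \ref{accp0}(4) then upgrades $T$ to a Mori PVD, and the finite character of $R$ follows from Theorem \ref{theo34}(2) as before.

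For $(1)\Leftrightarrow(3)$, which is the main obstacle, the forward direction reduces to verifying that the TAF property localizes and that every quasilocal TAF-domain is a Mori PVD; this is accomplished by examining how the maximal ideal and its powers must appear in any product decomposition into 2-absorbing ideals, and by exploiting that 2-absorbing ideals have radicals supported on at most two primes. The reverse direction shows that each proper ideal of a Mori PVD $R_M$ factors into 2-absorbing ideals by using the pullback presentation $R_M=L\times_{V/MR_M}V$ with $V$ a DVR, and then gluing the local factorizations into a global product via the finite character of $R$. The Krull dimension assertion drops out of Corollary \ref{accp0}(4). The hardest parts will be the conductor identification $T^*=MT:MT$ in step $(4)\Rightarrow(2)$ and the recovery of the local PVD structure from the purely combinatorial TAF hypothesis in $(1)\Rightarrow(3)$.
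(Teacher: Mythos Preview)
Your cycle is sensible and $(2)\Leftrightarrow(3)$ and $(3)\Rightarrow(4)$ are handled correctly, but $(4)\Rightarrow(2)$ has a real gap. First a minor slip: from $V\subseteq T^{*}$ one obtains $T:T^{*}\subseteq T:V$, not the reverse, so the inference $T:T^{*}\neq 0$ is unjustified as written (it is repairable via conduciveness once you argue separately that $T^{*}\neq K$). The substantive problem is the appeal to Corollary~\ref{accp0}(4) after the ``conductor argument''. That corollary requires $MT:MT$ to be a DVR \emph{whose maximal ideal is $MT$}; knowing only that $MT:MT$ is a DVR yields, via Corollary~\ref{moria}, a Mori APVD, which is strictly weaker. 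The distinction is exactly where seminormality must enter, and you never use it in this step. For a concrete failure without seminormality, take $T=F+X^{2}L[[X]]$ for fields $F\subsetneq L$: here $T$ is quasilocal, Mori and conducive, $MT=X^{2}L[[X]]$, and $MT:MT=L[[X]]=T^{*}$ is indeed a DVR, yet its maximal ideal is $XL[[X]]\neq MT$, so $T$ is a Mori APVD but not a PVD. The paper bypasses all of this by citing \cite[Corollary~2.6]{DF}, which packages the implication ``seminormal one-dimensional conducive quasilocal $\Rightarrow$ PVD'' directly, and then closes the loop $(4)\Rightarrow(1)$ through \cite[Theorem~4.4]{mad18} rather than returning to $(2)$.

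For $(1)\Leftrightarrow(3)$ the paper likewise does not argue from first principles: it invokes \cite[Theorems~4.3 and~4.4]{mad18}, which give precisely ``TAF localizes and has finite character'' and ``quasilocal TAF-domain $\Leftrightarrow$ atomic PVD'', and then upgrades atomic to Mori via Corollary~\ref{accp0}. Your outline (analyzing how $M$ and its powers appear in $2$-absorbing factorizations, then gluing local factorizations via finite character) names the content of those theorems rather than proving them. If you intend a self-contained argument you will have to reproduce the work of \cite{mad18} here, and the reverse gluing step in particular requires more than a sentence; otherwise, cite those results as the paper does.
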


\begin{proof}
We may assume that $R$ is not a field.\\
\\
(1)$\Rightarrow$(2): 
Suppose that $R$ is a TAF-domain. Then  $R$ is of finite character and $R_{M}$ is a TAF-domain for each maximal ideal $M$ of $R$ \cite[Theorem 4.4]{mad18}.  Hence $R_{M}$ is an atomic PVD for each maximal ideal $M$ of $R$ by 
\cite[Theorem 4.3]{mad18}, and Corollary \ref{accp0} then yields (2) (cf. \cite[Theorem 4.1]{kimr21}).\\
\\
(2)$\Rightarrow$(3): Follows from Theorem \ref{theo34}.\\
\\
(3)$\Rightarrow$(4): Assume (3). Then $R$ is seminormal \cite[Remarks 2.4]{df83}, and locally conducive \cite[Proposition 2.1]{DF}.\\
\\
(4)$\Rightarrow$(1): Suppose (4) holds, and let $M$ be a maximal ideal of $R$. Then $R_{M}$ is a seminormal Mori conducive domain. Hence $(R_{M})^{*}$ is a conducive Krull domain by Theorem \ref{bh96t}, which is a DVR \cite[Corollary 2.5]{DF}. Hence $R_{M}$ has Krull dimension 1 by Theorem \ref{bh96t}. It follows that $R_{M}$ is a PVD \cite[Corollary 2.6]{DF}, and $R$ is of finite character by Theorem \ref{theo34}. Now $R$ is a TAF-domain by \cite[Theorem 4.4]{mad18}.
\end{proof}

Proposition \ref{tafmori} tells us that every TAF-domain is a Mori domain. In fact, we can say something stronger. Note that an integral domain $R$ is Mori whenever the polynomial ring $R[X]$ is Mori, but the converse fails in general, as Roitman proved \cite[Theorem 8.4]{R90}.
\begin{prop}
\label{polymori}
If $R$ is a TAF-domain, then $R[X]$ is a Mori domain.  
\end{prop}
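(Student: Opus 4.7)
The plan is to leverage the structural strength of TAF-domains well beyond mere Mori-ness and to combine a local pullback analysis with a global Mori criterion via Theorem~\ref{theo34}(1). Note that Roitman's counterexample \cite{R90} shows that one cannot expect this conclusion for arbitrary Mori domains, so the proof must genuinely use the TAF hypothesis and not just Mori-ness.

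First, I would collect the ingredients. By Proposition~\ref{tafmori}, $R$ is a seminormal, locally conducive Mori domain of finite character with Krull dimension at most one, and each localization $R_{M}$ at a maximal ideal $M$ is a Mori PVD. By Lemma~\ref{mc1}(3), the associated valuation overring of $R_{M}$ coincides with $(R_{M})^{*}$ and is a DVR; by Theorem~\ref{bh96t}(2) and (3), $R^{*}$ is a Dedekind domain with $(R^{*})_{M}=(R_{M})^{*}$ for every maximal ideal $M$ of $R$.

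Second, I would settle the local case. Proposition~\ref{pullback1} together with Corollary~\ref{accp0} presents each $R_{M}$ as a pullback $L_{M}\times_{V_{M}/MV_{M}}V_{M}$ with $V_{M}=(R_{M})^{*}$ a DVR and $L_{M}$ a field, and this pullback description passes verbatim to $R_{M}[X]\cong L_{M}[X]\times_{(V_{M}/MV_{M})[X]}V_{M}[X]$. Since $V_{M}[X]$ is Noetherian (hence Mori) and the gluing happens over a polynomial ring over a field, a standard Mori-pullback result should yield that $R_{M}[X]$ is Mori.

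Finally, to globalize, I would verify the Mori criterion of Theorem~\ref{theo34}(1) for $R[X]$ directly: given a nonzero ideal $I$ of $R[X]$, produce a finitely generated $J\subseteq I$ with $R[X]:I=R[X]:J$. The strategy is to use the $h$-locality of $R$ (automatic from one-dimensionality and finite character) to restrict the computation of $R[X]:I$ to the finitely many maximal ideals of $R$ meeting the coefficient ideal of $I$, solve the finite-generation problem over each $R_{M}[X]$ using the previous step, and then patch these local finite generators into a single global one. The principal obstacle is precisely this globalization step: $R[X]$ itself need not have finite character, so Theorem~\ref{theo34}(4) does not apply, and one must patch local data by hand while ensuring that every prime of $R[X]$ contributing to the divisorial dual $R[X]:I$ has been accounted for. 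The TAF hypothesis---namely finite character together with the explicit pullback structure of each $R_{M}$ over a DVR---is what makes this patching possible and is what distinguishes TAF-domains from the Mori domains in Roitman's counterexample.
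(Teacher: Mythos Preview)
Your outline has the right shape---reduce to the local Mori-PVD case and exploit the DVR overring---but both halves contain genuine gaps. In the local step you invoke ``a standard Mori-pullback result'' for $R_{M}[X]\cong L_{M}[X]\times_{(V_{M}/M)[X]}V_{M}[X]$ without naming one; the available pullback criteria for Mori-ness (e.g.\ those in \cite{bh96}) carry hypotheses on divisoriality of the conductor or on the factor rings that you would still have to verify here, so this is not a citation you can wave at. In the global step you correctly observe that $R[X]$ need not have finite character, but your proposed ``patch by hand'' via Theorem~\ref{theo34}(1) is only described, not executed: you would need to show that $R[X]:I$ is governed by finitely many primes of $R[X]$, and those primes are not all extended from $R$, so the $h$-locality of $R$ alone does not obviously suffice.

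The paper sidesteps both difficulties by citing two results of Roitman. For the reduction to the quasilocal case it uses \cite[Proposition~3.14]{R89}, which directly says that if $R$ has finite character and each $R_{M}$ is Mori with $R_{M}[X]$ Mori, then $R[X]$ is Mori---so no ad~hoc patching is needed. For the local case it does not use a pullback theorem at all: instead it observes that with $F=R/M$, $L=V/M$, and $T$ the quotient field of $F[X]$, one has $F[X]=T\cap L[X]$, hence $R[X]=R[X]_{M[X]}\cap V[X]$, and then applies Roitman's intersection criterion \cite[Corollary~4.16]{R892}. Your strategy could perhaps be completed, but the paper's route via Roitman's two lemmas is both shorter and avoids the unresolved issues in your sketch.
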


\begin{proof}
Let $R$ be a TAF-domain. Then $R$ is a Mori LPVD, $R$ has finite character and $R_{M}$ is a Mori domain for each maximal ideal $M$ of $R$ by Proposition \ref{tafmori}, so by \cite[Proposition 3.14]{R89} we may assume that $R$ is quasilocal with maximal ideal $M$. Then $R$ is a Mori PVD, and $V$ is a DVR by Corollary \ref{moria}. Hence $V[X]$ is Mori (in fact, Noetherian). Let $F=R/M$, $L=V/M$, and $T$ be the quotient field of $F[X]$. Then $F[X]=T\cap L[X]$, so $R[X]=R[X]_{M[X]}\cap V[X]$ and $R[X]$ is Mori \cite[Corollary 4.16]{R892}.
\end{proof}



We next ``globalize" Lemma \ref{mc1}.(3). 
A ring extension $R\subseteq T$ is said to be \textit{unibranched} if, for each prime ideal $P$ of $R$, there exists exactly one prime ideal $Q$ of $T$ such that $Q\cap R=P$. 
By a \textit{locally almost pseudo-valuation domain}, or an $LAPVD$ in short, we mean an integral domain $R$ such that $R_{M}$ is an APVD for each maximal ideal $M$.

\begin{coro}
\label{triv58}
Let $R$ be an LAPVD. Then $R$ is a Mori domain if and only if the following two conditions are satisfied. 
\begin{enumerate}
    \item $R^{*}$ is a Dedekind domain.
    \item $R\subseteq R^{*}$ is a unibranched extension.
\end{enumerate}
\end{coro}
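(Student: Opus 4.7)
The plan is to prove the two implications separately, invoking the local characterization of Mori APVDs from Lemma \ref{mc1} together with the results on Mori and Krull domains collected above.

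For the forward direction, I would first localize: each $R_{M}$ is a Mori APVD, so Lemma \ref{mc1}(2)--(3) yields that $(R_{M})^{*}$ is a DVR and $\dim R_{M}=1$, making $R$ a one-dimensional Mori domain. Theorem \ref{theo34}(2) then gives that $R$ is of finite character. The central step is identifying $R^{*}$ with $\bigcap_{M}(R_{M})^{*}$: the inclusion $\subseteq$ is immediate, while for the reverse one I would take $x\in\bigcap_{M}(R_{M})^{*}$, write $x=a/b$ with $a,b\in R$, observe that the local witnesses $d_{M}$ with $d_{M}x^{n}\in R_{M}$ are trivial at all but the finitely many $M$ containing $ab$, and multiply these together to obtain a single $d\in R\setminus\{0\}$ with $dx^{n}\in R$ for all $n$. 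Finite character similarly forces each $0\neq x\in K$ to be a unit in $(R_{M})^{*}$ for all but finitely many $M$, so $R^{*}=\bigcap_{M}(R_{M})^{*}$ is a locally finite intersection of DVRs with common quotient field $K$, and hence is a Krull domain. Theorem \ref{bh96t}(3) then gives $(R^{*})_{R\setminus M}=(R_{M})^{*}$, a DVR. Since $R$ is one-dimensional, any maximal $Q$ of $R^{*}$ contracts to a maximal $M$ of $R$ (a zero contraction would force $Q=0$), and then $(R_{M})^{*}\subseteq R^{*}_{Q}\subseteq K$ sandwiches $R^{*}_{Q}$ between a DVR and its quotient field, forcing $R^{*}_{Q}=(R_{M})^{*}$ to be a DVR itself. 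Thus $R^{*}$ is one-dimensional Krull, i.e., Dedekind. The same identity $R^{*}_{Q}=(R_{M})^{*}$ rules out two distinct primes above the same $M$ in the Dedekind domain $R^{*}$, giving unibranchedness.

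Conversely, assume $R^{*}$ is Dedekind and $R\subseteq R^{*}$ is unibranched. Since $R^{*}$ is Krull, Theorem \ref{bh96t}(3) gives $(R_{M})^{*}=(R^{*})_{R\setminus M}$, a localization of a Dedekind domain and so at most one-dimensional. The associated valuation overring of the APVD $R_{M}$ sits inside $(R_{M})^{*}$, so by Lemma \ref{l13}, $\dim R_{M}=1$, making $R$ one-dimensional. Together with unibranchedness this leaves $(R^{*})_{R\setminus M}$ with a single maximal ideal, so it is a DVR, and Lemma \ref{mc1}(3) gives that $R_{M}$ is Mori. The Dedekind domain $R^{*}$ is of finite character by Theorem \ref{dd}, and the injective map $M\mapsto P_{M}$ (with $P_{M}$ the unique prime of $R^{*}$ above $M$) provided by unibranchedness transports this to $R$; Theorem \ref{theo34}(4) then yields that $R$ is Mori.

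The main obstacle is Krullness of $R^{*}$ in the forward direction. Theorem \ref{bh96t}(1) would suffice under $R:R^{*}\neq(0)$, but this conductor is not obviously nonzero globally, even though $R_{M}:(R_{M})^{*}=MR_{M}\neq(0)$ locally by Corollary \ref{c13}(3); and the route via Theorem \ref{bh96t}(2) is unavailable because $R$ need not be seminormal, unlike the Mori LPVD case addressed in Proposition \ref{tafmori}. Writing $R^{*}$ as a locally finite intersection of DVRs and invoking the classical fact that such intersections are Krull bypasses both difficulties.
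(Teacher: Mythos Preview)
Your forward direction is correct and takes a different route from the paper's. The paper shows $(R^{*})_{R\setminus M}$ is a DVR by a case split on whether $M$ is invertible; in the non-invertible case it uses the Mori hypothesis (Theorem~\ref{theo34}(1)) to pass the colon through localization, obtaining $(M:M)_{R\setminus M}=MR_{M}:MR_{M}$, and then sandwiches $(R^{*})_{R\setminus M}$ between this DVR and $(R_{M})^{*}\subsetneq K$. You instead establish $R^{*}=\bigcap_{M}(R_{M})^{*}$ by a direct finite-character argument, recognize this as a locally finite intersection of DVRs (hence Krull), and let Theorem~\ref{bh96t}(3) identify the localizations uniformly. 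Your approach trades the case analysis for the explicit intersection computation; both are valid, and yours has the virtue of making Krullness of $R^{*}$ visible before any local work.

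Your converse, however, has a real gap where you deduce $\dim R_{M}=1$. From $V_{M}\subseteq(R_{M})^{*}$ and $\dim(R_{M})^{*}\le1$ you conclude $\dim V_{M}\le1$ via Lemma~\ref{l13}, but the inequality runs the wrong way: $(R_{M})^{*}$ is an \emph{overring} of the valuation domain $V_{M}$, and overrings of valuation domains have dimension at most that of the base. Concretely, a valuation domain with value group $\mathbb{Z}\times\mathbb{Z}$ ordered lexicographically has dimension~$2$ while its complete integral closure is a DVR. So the containment gives no upper bound on $\dim V_{M}$. The missing step is to bring in unibranchedness \emph{before} one-dimensionality: by Lemma~\ref{mc1}(1) the ring $(R_{M})^{*}=(R^{*})_{R\setminus M}$ is a valuation domain, hence quasilocal, so at most one maximal ideal of $R^{*}$ contracts into $M$; since the contraction map is a bijection, at most one nonzero prime of $R$ lies in $M$, forcing $\dim R_{M}=1$. (The paper's terse ``$M$ has height~$1$ by~(2)'' is encoding exactly this.) With this correction in place, the rest of your converse goes through.
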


\begin{proof}
Assume that $R$ is a Mori LAPVD, and choose a maximal ideal $M$ of $R$. Then $R_{M}$ is a Mori APVD by Theorem \ref{theo34}. We claim that  $(R^{*})_{R\setminus M}$ is a DVR.  To avoid triviality,  $R_{M}$ is assumed to be an integral domain that is not a field. We consider two cases:\\
\\
Case 1: $M$ is an invertible ideal of $R$.\\
\\
If so, then $MR_{M}$ is a principal ideal of $R_{M}$, and $(R_{M})^{*}=MR_{M}:MR_{M}=R_{M}$ is a DVR by Lemma \ref{mc1}. Since $R_{M}\subseteq (R^{*})_{R\setminus M}\subseteq (R_{M})^{*}$, it follows that $(R^{*})_{R\setminus M}$ is a DVR.\\
\\
Case 2: $M$ is not an invertible ideal of $R$.\\
\\
In this case, we have $R:M=M:M$. Moreover, since $R$ is Mori, there exists a finitely generated ideal $J$ of $R$ such that $J\subseteq M$ and $R:M=R:J$ by Theorem \ref{theo34}.(1). Thus $(M:M)_{R\setminus M}\subseteq MR_{M}:MR_{M}\subseteq R_{M}:MR_{M}\subseteq R_{M}:JR_{M}=(R:J)_{R\setminus M}=(R:M)_{R\setminus M}=(M:M)_{R\setminus M}$. Hence $(M:M)_{R\setminus M}=MR_{M}:MR_{M}$ is a DVR   by Corollary \ref{moria}. On the other hand, $R_{M}$ is a one-dimensional quasilocal domain by Lemma \ref{mc1}, so $(R_{M})^{*}\subsetneq K$  \cite[Proposition 4.3.(ii)]{DF}.
Since $(M:M)_{R\setminus M}\subseteq (R^{*})_{R\setminus M}\subseteq (R_{M})^{*}\subsetneq K$, we deduce that $(R^{*})_{R\setminus M}=(R_{M})^{*}$ is a DVR.

Now the claim is proved, and we can see that there exists exactly one prime ideal $N$ of $R^{*}$ such that $N\cap R=M$. Such $N$ is a maximal ideal of $R^{*}$, so $R\subseteq R^{*}$ is a unibranched extension and $(R^{*})_{N'}$ is DVR for each maximal ideal $N'$ of $R^{*}$. Since $R$ is a one-dimensional Mori domain, $R$ is of finite character (Theorem \ref{theo34}.(2)) and so is $R^{*}$. Thus $R^{*}$ is Dedekind by Theorem \ref{dd}.\\
\\
Conversely, suppose that $R$ satisfies $(1)$ and $(2)$, and let $M$ be a maximal ideal of $R$. Then there exists unique maximal ideal $N$ of $R^{*}$ that contracts to $M$, and $M$ has height 1 by $(2)$. Thus $R_{M}$ is a one-dimensional APVD, and by Theorem \ref{bh96t} and Lemma \ref{mc1} $(R_{M})^{*}=(R^{*})_{R\setminus M}=(R^{*})_{N}$ is its associated valuation overring, which is a DVR by (1). Hence $R_{M}$ is a Mori domain. Moreover, since $R^{*}$ is of finite character, so is $R$ by (2). Therefore $R$ is a Mori domain by \cite[Corollary 5]{z88}.
\end{proof}

\begin{rem}
It is well-known that an integral domain is Dedekind if and only if it is an integrally closed Noetherian ring with Krull dimension at most 1 \cite[Theorem 96]{K}. This equivalence cannot be generalized to TAF-domains the way Proposition \ref{tafmori} does. In other words, a TAF-domain is a seminormal Mori domain with Krull dimension at most 1, but the converse fails in general. For instance,
let $R=\mathbb{Q}+(X^2-1)\mathbb{R}[X]$ where $X$ an indeterminate.
Then $R$ is the pullback $\mathbb{Q}\times_{T/M}T$ where $T=\mathbb{R}[X]$ and $M=(X^2-1)T=(X-1)T\cap (X+1)T$. Hence $R$ is a seminormal Mori one-dimensional domain and $M$ is a maximal ideal of $R$ \cite[Proposition 4.1 and Theorem 4.3.(3)]{bh96}. However, $R\subseteq R^{*}=T$ is not a unibranched extension since the prime ideals $(X-1)T$ and $(X+1)T$ of $T$ both lie over $M$. Hence by Proposition \ref{tafmori} and Corollary \ref{triv58}, $R$ cannot be a TAF-domain. 
\end{rem}

Unlike Dedekind domains, a TAF-domain may not be integrally closed. For instance, let $X$ be an indeterminate and $F$ a field that is not algebraically closed. If $L$ is an algebraic closure of $F$,  then $R=F+XL[X]$ is a TAF-domain \cite[Corollary 4.8]{mad18}, but $R$ is not integrally closed. On the other hand, we have the following.

\begin{prop}
An integral overring of a TAF-domain is a TAF-domain.
\end{prop}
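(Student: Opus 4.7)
The plan is to verify characterization (2) of Proposition \ref{tafmori}: an integral domain is a TAF-domain if and only if it is of finite character and its localization at every maximal ideal is a Mori PVD. So let $R$ be a TAF-domain and $T$ an integral overring of $R$; I will check both conditions for $T$.

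First fix a maximal ideal $N$ of $T$ and set $M=N\cap R$, which is maximal in $R$ by integrality. Then $T_{R\setminus M}$ is integral over $R_M$ and lies inside the common quotient field $K$, so it is an integral overring of the Mori PVD $R_M$. By \cite[Theorem 1.7]{HH}, every integral overring of a PVD is itself a PVD with the same maximal ideal and the same associated valuation overring; since the associated valuation overring of $R_M$ is the DVR $V=MR_M:MR_M$ (Corollary \ref{moria}), Corollary \ref{accp0} applied to $T_{R\setminus M}$ shows that $T_{R\setminus M}$ is a Mori PVD. In particular it is quasilocal, so $N$ is the only prime of $T$ lying over $M$, and $T_N=T_{R\setminus M}$ is a Mori PVD.

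Next I establish finite character of $T$. The previous paragraph shows that each maximal ideal of $R$ has exactly one prime of $T$ lying over it, and by integrality every maximal ideal of $T$ contracts to a maximal ideal of $R$, so contraction gives a bijection between the maximal ideals of $T$ and those of $R$. Given $t\in T\setminus\{0\}$, pick a monic integral equation $t^{n}+r_{n-1}t^{n-1}+\cdots+r_{0}=0$ over $R$ of minimal degree; minimality together with $T$ being a domain forces $r_{0}\neq 0$, and rearranging gives $r_{0}\in tT\cap R$. Any maximal ideal of $T$ containing $t$ therefore contracts to a maximal ideal of $R$ containing the nonzero element $r_{0}$, of which there are only finitely many by the finite character of $R$; the bijection then forces only finitely many maximal ideals of $T$ to contain $t$.

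Combining the two steps, $T$ is of finite character and $T_{N}$ is a Mori PVD at every maximal ideal $N$, so Proposition \ref{tafmori} yields that $T$ is a TAF-domain. The main technical step is the passage from $R_{M}$ Mori PVD to $T_{R\setminus M}$ Mori PVD; this rests on the classical fact from \cite[Theorem 1.7]{HH} that integral overrings of a PVD share both its maximal ideal and its associated valuation overring, which lets the DVR criterion of Corollary \ref{accp0} transfer verbatim and, as a bonus, supplies the quasilocality needed for the bijection of maximal ideals used in the finite-character argument.
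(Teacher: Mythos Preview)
Your proof is correct and follows essentially the same approach as the paper: both use \cite[Theorem~1.7]{HH} to see that $T_{R\setminus M}$ is a PVD sharing the maximal ideal (hence the associated valuation overring) of $R_M$, then transfer the Mori property via the DVR criterion (you cite Corollary~\ref{accp0}, the paper cites Lemma~\ref{mc1}, which amount to the same thing here), and deduce finite character of $T$ from the resulting unibranchedness together with finite character of $R$. Your finite-character argument via an explicit integral equation is a bit more detailed than the paper's one-line appeal to unibranchedness, but the structure is identical.
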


\begin{proof}
Let $R$ be a TAF-domain and $T$ an integral overring of $R$. 
If $N$ is a maximal ideal of $T$ and $M=N\cap R$, then $ T_{R\setminus M}$ must be a PVD whose maximal ideal is the maximal ideal of $R_{M}$ \cite[Theorem 1.7]{HH}, because it is an integral overring of $R_{M}$.  Therefore $T_{N}=T_{R\setminus M}$, and $R\subseteq T$ is unibranched. It follows that $T$ is of finite character, since $R$ is of finite character (Proposition \ref{tafmori}). It also follows that the associated valuation overring of $T_{N}$ equals that of $R_{M}$, so $T_{N}$ is Mori by Lemma \ref{mc1}. 
\end{proof}

In the next corollary, we present a relation between various conditions on Mori LPVDs (equivalently, TAF-domains), and extend Proposition \ref{omori}. Recall from \cite{p76} that an integral domain $R$ is an \textit{i-domain} if the contraction map $\textnormal{Spec}(T)\to \textnormal{Spec}(R)$ is injective for each overring $T$ of $R$.
On the other hand, as defined in \cite[p.26]{K},
 an integral domain $R$ is an \textit{S-domain} if $PR[X]$ is a height 1 prime ideal of $R[X]$ whenever $P$ is a height 1 prime ideal of $R$. A ring $R$ is a \textit{strong $S$-ring} if $R/N$ is an $S$-domain for every prime ideal $N$ of $R$. 

\begin{coro}
\label{tafover}
Let $R$ be a Mori LPVD. 
Then the following are equivalent.
\begin{enumerate}
    \item Each overring of $R$ is a Mori LPVD. 
    \item Each overring of $R$ is a Mori domain.
    \item Each overring of $R$ is an LPVD.
    \item $R'=R^{*}$. 
    \item $R'$ is a Dedekind domain. 
    \item $R'$ is a Pr{\"u}fer domain.
    \item $R'$ is a Krull domain.
    \item $R$ is an $i$-domain.
    \item $R$ is a strong $S$-ring. 
    \item $\textnormal{dim}(R[X])\le 2$.
\end{enumerate}
\end{coro}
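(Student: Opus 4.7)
My approach pivots around condition (5), since the hypothesis that $R$ is a Mori LPVD already yields via Corollary \ref{triv58} that $R^{*}$ is Dedekind and $R \subseteq R^{*}$ is unibranched; the question thus reduces to whether the integral closure $R'$ fills out all of $R^{*}$.

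I would first handle the block (4) $\Leftrightarrow$ (5) $\Leftrightarrow$ (6) $\Leftrightarrow$ (7). The inclusions $R \subseteq R' \subseteq R^{*}$ combined with $R^{*}$ Dedekind make (4) $\Rightarrow$ (5) $\Rightarrow$ (6), (7) immediate. For (7) $\Rightarrow$ (4), a Krull domain is completely integrally closed and $(R')^{*} = R^{*}$, so $R' = (R')^{*} = R^{*}$. For (6) $\Rightarrow$ (5), I would localize: each maximal ideal $N$ of $R'$ contracts to a maximal ideal $M$ of $R$, and $R'_{N} = (R_{M})'$ is an integral overring of the Mori PVD $R_{M}$, hence itself a PVD by \cite[Theorem 1.7]{HH}. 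If $R'$ is Pr{\"u}fer, then $R'_{N}$ is simultaneously a valuation domain and a PVD, which forces it to coincide with the associated valuation overring of $R_{M}$, a DVR by Corollary \ref{moria}. Finite character of $R'$ then descends from that of $R$ via the unibranchedness of $R \subseteq R'$ (each $(R_{M})'$ is quasilocal), and Theorem \ref{dd} yields $R'$ Dedekind.

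Next I would establish (5) $\Leftrightarrow$ (8) $\Leftrightarrow$ (9) $\Leftrightarrow$ (10). Papick's characterization of $i$-domains says $R$ is an $i$-domain iff $R'$ is Pr{\"u}fer and $R \subseteq R'$ is unibranched; since unibranchedness is automatic here, (8) $\Leftrightarrow$ (6). The equivalence (9) $\Leftrightarrow$ (10) is the standard characterization of strong $S$-rings via $\dim R[X] = \dim R + 1$, which for one-dimensional $R$ (Proposition \ref{tafmori}) reduces to $\dim R[X] \le 2$. If $R'$ is Dedekind, it is Noetherian Pr{\"u}fer, so $\dim R'[X] = 2$, and $\dim R[X] = \dim R'[X] = 2$ via the integral extension $R \subseteq R'$. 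The reverse implication (10) $\Rightarrow$ (5) requires extracting integral-closure information from the dimension bound, which I would approach by arguing that a Jaffard-type bound on $\dim R[X]$ forces each $(R_{M})'$ to equal its associated valuation overring.

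For the remaining conditions, (1) $\Rightarrow$ (2) is trivial, (2) $\Rightarrow$ (5) is \cite[Theorem 3.4]{bd84}, and (3) $\Rightarrow$ (6) follows because an integrally closed PVD is a valuation domain: each $R'_{M}$ is a PVD by (3) and integrally closed as a localization of the integrally closed ring $R'$, so each $R'_{M}$ is valuation. For (5) $\Rightarrow$ (1), (3), I would apply Proposition \ref{omori} locally at each $R_{M}$ (valid since (5) gives $(R_{M})' = V_{M}$ a DVR) to show each localization of an overring $T$ of $R$ at a maximal ideal is a Mori PVD, and then invoke \cite[Theorem 3.4]{bd84} to conclude $T$ is itself Mori, hence a Mori LPVD. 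The main obstacle I anticipate is the implication (10) $\Rightarrow$ (5): controlling $\dim R[X]$ in the non-Noetherian Mori setting is delicate, and distilling the Dedekindness of $R'$ from a dimension bound requires combining the Mori LPVD structure with Jaffard-type dimension inequalities applied to the integral extension $R \subseteq R'$.
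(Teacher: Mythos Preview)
Your overall architecture is reasonable, but there are two genuine gaps.

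First, your argument for (3)$\Rightarrow$(6) rests on the claim that an integrally closed PVD is a valuation domain, and this is false. Take $V=\mathbb{Q}(t)[[X]]$ with maximal ideal $M=X\mathbb{Q}(t)[[X]]$ and let $R=\mathbb{Q}+M$. Then $R$ is a PVD with associated valuation overring $V$, and $R$ is integrally closed because any element of $K$ integral over $R$ has constant term algebraic over $\mathbb{Q}$ inside $\mathbb{Q}(t)$, hence in $\mathbb{Q}$. Yet $R$ is not a valuation domain since neither $t$ nor $t^{-1}$ lies in $R$. So knowing that each $(R')_{N}$ is an integrally closed PVD does not force it to be a valuation domain. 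The paper sidesteps this entirely by invoking \cite[Theorem 2.9]{df83}, which gives the equivalence of (3), (6) and (8) for LPVDs directly; the real content is the $i$-domain characterization, not a local ``integrally closed PVD $=$ valuation'' shortcut.

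Second, you correctly identify (10)$\Rightarrow$(4) as the crux but do not supply an argument; ``Jaffard-type bounds'' is too vague here. The paper's route is specific to PVDs: for each maximal ideal $M$ one has $\dim(R_{M}[X])\le 2$, and \cite[Theorem 2.5]{HH2} then says the Mori PVD $R_{M}$ is a strong $S$-ring, whence \cite[Remark 2.6]{HH2} forces $(R_{M})'$ to equal the associated valuation overring $(R_{M})^{*}$. Globalizing gives $R'=R^{*}$. Without this PVD-specific input the implication does not follow from general dimension theory. A smaller point: in your (5)$\Rightarrow$(1), after showing each $T_{N}$ is a Mori PVD you still need $T$ to have finite character before concluding $T$ is Mori; \cite[Theorem 3.4]{bd84} goes the other direction. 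The paper handles this by first passing through (8) and noting that an $i$-domain of finite character has only $i$-domain overrings of finite character.
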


\begin{proof}
Note first that (1)$\Rightarrow$(2) and (1)$\Rightarrow$(3) are trivial, while (1)$\Rightarrow$(4) and (2)$\Rightarrow$(5) can be derived from Proposition \ref{tafmori} and \cite[Propopsition 3.3, Theorem 3.4]{bd84}. On the other hand, the equivalence of (3), (6) and (8) follows from \cite[Theorem 2.9]{df83}.\\
\\
(4)$\Rightarrow$(5): Follows from Corollary \ref{triv58}.\\
\\
(5)$\Leftrightarrow$(7): Follows from Proposition \ref{tafmori}, \cite[Theorem 3.1]{mad18} and the well-known fact that an integral domain is Dedekind if and only if it is Krull domain with Krull dimension at most 1 \cite[Theorem 12.5]{Mat}.\\
\\
(5)$\Rightarrow$(8): Follows from \cite[Theorem 2.9]{df83}.\\
\\
(8)$\Rightarrow$(1): Suppose that (8) holds, and let $T$ be an overring of $R$. Note first that by Proposition \ref{tafmori} $R$ is an $i$-domain of finite character, and so is $T$. Let $N$ be a maximal ideal of $T$. Then by Proposition \ref{tafmori},
it suffices to show that $T_{N}$ is a Mori PVD. We may assume that $T_{N}\neq K$. Let $M=N\cap R$. Then $M$ is a maximal ideal of $R$ since $R$ is one-dimensional.  Now $R_{M}$ is a Mori PVD $i$-domain. Hence $(R_{M})'$ is the associated valuation overring of $R_{M}$ \cite[Corollary 2.10]{df83}, which is a DVR by Corollary \ref{accp0}
. Thus $T_{N}$ is an integral overring of $R_{M}$ \cite[Proposition 1.16.(3)]{Gi}, and $MR_{M}$ is the maximal ideal of $T_{N}$. It follows that $T_{N}$ is a Mori PVD by Corollary \ref{accp0}. 
\\
\\
(5)$\Rightarrow$(9): If $R'$ is Dedekind, then it is a strong $S$-ring \cite[Proposition 2.5]{m79}, so $R$ is a strong $S$-ring \cite[Corollary 4.7]{m79}.\\
\\
(9)$\Rightarrow$(10): Since $R$ has Krull dimension at most 1  by Proposition \ref{tafmori}, the conclusion follows from \cite[Theorem 39]{K}.\\
\\
(10)$\Rightarrow$(4): Since $R$ has Krull dimension at most 1, $R$ is a strong $S$-ring if and only if $R$ is an $S$-domain. If $\textnormal{dim}(R[X])\le 2$, then for each maximal ideal $M$ of $R$, $\textnormal{dim}(R_{M}[X])=\textnormal{dim}(R[X]_{R\setminus M})\le 2$ and $R_{M}$ is a strong $S$-ring by \cite[Theorem 2.5]{HH2}. Now $(R_{M})'$ is the associated valuation overring of $R_{M}$ \cite[Remark 2.6]{HH2}. Hence $(R')_{R\setminus M}=(R_{M})'=(R_{M})^{*}=(R^{*})_{R\setminus M}$. Since this equality holds for arbitrary maximal ideal $M$ of $R$, we must have $R'=R^{*}$. 
\end{proof}

Recall that an integral domain $R$ is said to be a \textit{globalized pseudo-valuation domain}, or a $GPVD$ in short, if there exists a Pr{\"u}fer overring $T$ of $R$ satisfying the following two conditions.
\begin{enumerate}
    \item $R\subseteq T$ is a unibranched extension.
    \item There exists a nonzero radical ideal $A$ common to $T$ and $R$ such that
each prime ideal of $T$ (respectively, $R$) which contains $A$ is a maximal ideal
of $T$ (respectively, $R$).
\end{enumerate}

As mentioned in \cite[p.155-156]{df83}, the class of GPVDs is a stronger globalization of that of PVDs than that of LPVDs, in the sense that every GPVD is an LPVD, and given a maximal ideal $M$ of $R$, there exists unique maximal ideal $N$ of $T$ such that $T_{N}$ is the associated valuation overring of $R_{M}$. In this case, $T$ is uniquely determined by the above conditions, and is called the \textit{Pr{\"u}fer domain associated to $R$}.  From now on, when $R$ is a GPVD, $T$ will denote the Pr{\"u}fer domain associated to $R$.
However, note that even a Noetherian LPVD may not be a GPVD \cite[Example 3.4]{df83}.


\begin{prop}
Let $R$ be a GPVD. Then $R$ is Mori if and only if $T$ is a Dedekind domain. 
\end{prop}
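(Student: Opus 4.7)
The plan is to prove $T = R^{*}$ under the Mori hypothesis (using Corollary~\ref{triv58} to identify the complete integral closure), and to use Corollary~\ref{moria} locally for the converse together with Theorem~\ref{theo34}.(4).

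Forward direction. Suppose $R$ is Mori. Since every GPVD is an LPVD and hence an LAPVD, Corollary~\ref{triv58} yields that $R^{*}$ is a Dedekind domain and $R\subseteq R^{*}$ is a unibranched extension. The strategy is to show $T=R^{*}$, which immediately gives that $T$ is Dedekind. For each maximal ideal $M$ of $R$, let $N_{M}$ (respectively $Q_{M}$) denote the unique maximal ideal of $T$ (respectively $R^{*}$) lying over $M$. By the very definition of the Pr\"ufer domain associated to a GPVD, $T_{N_{M}}$ is the associated valuation overring of the Mori PVD $R_{M}$, which by Lemma~\ref{mc1}.(3) coincides with $(R_{M})^{*}$. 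Since $R^{*}$ is Krull, Theorem~\ref{bh96t}.(3) gives $(R_{M})^{*}=(R^{*})_{R\setminus M}=(R^{*})_{Q_{M}}$, so $T_{N_{M}}=(R^{*})_{Q_{M}}$ as subrings of $K$.

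To glue these local identifications, I first observe that contraction induces a bijection between the maximal ideals of $T$ and the maximal ideals of $R$: if $N$ is a maximal ideal of $T$, then $T_{N}$ is a DVR, so $N\cap R$ cannot be zero (otherwise $T_{N}$ would contain $K$), and since $R$ is one-dimensional $N\cap R$ is maximal in $R$; the unibranched property handles the injectivity, and $M\mapsto N_{M}$ provides the inverse. The analogous statement for $R^{*}$ follows from the unibranched extension of Corollary~\ref{triv58}. Using the standard fact that a domain equals the intersection of its localizations at maximal ideals, I then compute
\[
T=\bigcap_{N}T_{N}=\bigcap_{M}T_{N_{M}}=\bigcap_{M}(R^{*})_{Q_{M}}=\bigcap_{Q}(R^{*})_{Q}=R^{*},
\]
where $N$ (respectively $Q$) ranges over the maximal ideals of $T$ (respectively $R^{*}$). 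Thus $T=R^{*}$ is Dedekind.

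Reverse direction. Suppose $T$ is Dedekind. Then $T_{N_{M}}$ is a DVR for each maximal ideal $M$ of $R$. Since $R_{M}$ is a PVD (as $R$ is an LPVD) with associated valuation overring $MR_{M}:MR_{M}=T_{N_{M}}$ by Theorem~\ref{theo1}.(4), Corollary~\ref{moria} gives that $R_{M}$ is a Mori PVD. To invoke Theorem~\ref{theo34}.(4), I need $R$ to be of finite character. For any nonzero nonunit $x\in R$, the maximal ideals of $R$ containing $x$ correspond bijectively via contraction with the maximal ideals of $T$ containing $x$ (the bijection is $M\leftrightarrow N_{M}$, and $x\in M$ iff $x\in N_{M}$ since $M=N_{M}\cap R\subseteq N_{M}$); the latter set is finite because $T$ is Dedekind. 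Hence $R$ is locally Mori and of finite character, so $R$ is Mori by Theorem~\ref{theo34}.(4).

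The main obstacle is identifying $T$ with $R^{*}$ in the forward direction. Everything else is bookkeeping over the three bijections of maximal ideals (of $R$, $T$, and $R^{*}$), together with the standard intersection representation of a domain as $\bigcap_{M}R_{M}$.
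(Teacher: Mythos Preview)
Your argument is correct. One small slip: in the forward direction, when you argue the bijection on maximal ideals and write ``if $N$ is a maximal ideal of $T$, then $T_{N}$ is a DVR'', you have not yet shown this for an \emph{arbitrary} maximal $N$ of $T$ (only for those of the form $N_{M}$). The conclusion $N\cap R\neq 0$ that you actually need there follows anyway, either because $T_{N}$ is a nontrivial valuation domain (as $T$ is Pr\"ufer) or directly from the unibranched condition built into the definition of a GPVD, so the gap is cosmetic.

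Your route differs from the paper's. For the forward direction the paper simply invokes Corollary~\ref{tafover}, but that corollary lists conditions \emph{equivalent} to ``every overring of $R$ is a Mori LPVD'' and does not on its face produce that the associated Pr\"ufer domain $T$ is Dedekind; your explicit identification $T=R^{*}$ via the local equalities $T_{N_{M}}=(R_{M})^{*}=(R^{*})_{Q_{M}}$ and the intersection formula $T=\bigcap_{N}T_{N}$ is a cleaner and self-contained argument (and is essentially how the paper establishes $T=R^{*}$ in Lemma~\ref{66} immediately afterward). For the reverse direction the paper's proof is truncated mid-sentence; your argument---$R_{M}$ is a Mori PVD by Corollary~\ref{moria}, finite character is inherited from the Dedekind domain $T$ through the unibranched bijection of maximal ideals, and then Theorem~\ref{theo34}.(4) globalizes---supplies exactly the missing details.
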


\begin{proof}
Suppose that $R$ is Mori. Since every GPVD is an LPVD, the conclusion follows from Corollary \ref{tafover}. Conversely, suppose that $T$ is Dedekind. Then given a maximal ideal $M$ of $R$, there exists unique maximal ideal $N$ of $T$ such that $T_{N}$ is the associated valuation overring of $R_{M}$. Since $(R_{M})^{*}=(T_{N})^{*}$,  
\end{proof}

\begin{lemm}
\label{66}
Let $R$ be a Mori domain. Then $R$ is a GPVD if and only if $R:R^{*}\neq (0)$ and $R$ is an LPVD. In this case, $T=R^{*}$ and $A=R:R^{*}$, where $T$ and $A$ are as mentioned in the definition of a GPVD. 
\end{lemm}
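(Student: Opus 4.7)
The plan is to take $T=R^{*}$ and $A=R:R^{*}$ throughout.

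For the sufficiency direction, suppose $R$ is a Mori LPVD with $R:R^{*}\neq(0)$. Since every LPVD is an LAPVD, Corollary \ref{triv58} gives that $R^{*}$ is Dedekind (hence Pr\"ufer) and that $R\subseteq R^{*}$ is unibranched, realizing condition (1) with $T=R^{*}$. For condition (2), the ideal $A=R:R^{*}$ is nonzero by hypothesis, is contained in $R$, and is closed under multiplication by $R^{*}$, so it is common to $R$ and $R^{*}$. By Proposition \ref{tafmori} and Theorem \ref{theo34}.(2), $R$ is one-dimensional of finite character, so only finitely many maximal ideals $M_{1},\ldots,M_{n}$ of $R$ contain $A$. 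Using Theorem \ref{bh96t}.(3) to identify $(R^{*})_{M}=(R_{M})^{*}$ at each maximal ideal $M$ of $R$, together with the resulting localization $(R:R^{*})_{M}=R_{M}:(R_{M})^{*}$, I will show that at each $M_{i}$ one has $R_{M_{i}}\neq (R_{M_{i}})^{*}$, whence Corollary \ref{c13}.(3) yields local conductor $M_{i}R_{M_{i}}$. A local check then yields $A=M_{1}\cap\cdots\cap M_{n}$, so $A$ is radical; and since $R$ and $R^{*}$ are one-dimensional, every nonzero prime containing $A$ is maximal.

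For the necessity direction, suppose $R$ is a Mori GPVD with associated Pr\"ufer overring $T$ and common radical ideal $A$. Every GPVD is an LPVD (noted before the statement), so Corollary \ref{triv58} still gives that $R^{*}$ is Dedekind and $R\subseteq R^{*}$ is unibranched. The main task is to identify $T=R^{*}$. For each maximal ideal $M$ of $R$ with unique lift $N_{M}$ in $T$, the GPVD definition says $T_{N_{M}}$ is the associated valuation overring of the Mori PVD $R_{M}$, which by Lemma \ref{mc1}.(3) equals $(R_{M})^{*}$, and by Theorem \ref{bh96t}.(3) equals $(R^{*})_{M}$. Inheriting finite character from $R$, the Pr\"ufer domain $T$ is Dedekind by Theorem \ref{dd}, and both $T$ and $R^{*}$ arise as intersections of the same family of DVRs inside the quotient field, forcing $T=R^{*}$. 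Finally, since $A$ is a nonzero common ideal of $R$ and $R^{*}$, one has $A\subseteq R:R^{*}$, so $R:R^{*}\neq(0)$; the sufficiency argument then confirms that $A=R:R^{*}$ is a valid choice.

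The main obstacle is the local conductor computation in the sufficiency direction: showing that $R:R^{*}$ itself equals $M_{1}\cap\cdots\cap M_{n}$, rather than merely sharing the same radical, requires both the localization-of-conductor identity (which relies on $R^{*}$ being Krull) and the exclusion of the DVR case at each $M_{i}$; only then does Corollary \ref{c13}.(3) furnish the maximal-ideal conductor locally and allow the global radicality of $R:R^{*}$ to be read off.
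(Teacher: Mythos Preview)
Your proposal is correct and follows essentially the same approach as the paper: in both directions you localize, invoke Corollary~\ref{triv58} for the Dedekind and unibranched properties of $R^{*}$, use Lemma~\ref{mc1}.(3) to identify the local associated valuation overring with $(R_{M})^{*}$, and then globalize. One small correction: the localization-of-conductor identity $(R:R^{*})_{M}=R_{M}:(R^{*})_{M}$ does not follow from $R^{*}$ being Krull, but rather from the Mori property of $R$ via Theorem~\ref{theo34}.(1) (since $R^{*}$ is a fractional ideal of $R$, there is a finitely generated $J\subseteq R^{*}$ with $R:R^{*}=R:J$, and conductors by finitely generated modules commute with localization); this is exactly the justification the paper supplies.
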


\begin{proof}
Suppose that $R$ is a GPVD. Given a maximal ideal $M$ of $R$, $R_{M}$ is a Mori PVD, and there exists the unique prime ideal $N$ of $T$ such that $N\cap R=M$. Notice that $(R_{M})^{*}$ is the associated valuation overring of $R_{M}$ by Lemma \ref{mc1}. Then by \cite[Lemma 3.1]{b93} and Corollary \ref{triv58} we have $(R^{*})_{R\setminus M}=(R_{M})^{*}=T_{N}=T_{R\setminus M}$, and $T=R^{*}$ by globalization. $R:R^{*}\neq (0)$ then follows from the definition of GPVDs.\\
\\
Conversely, suppose that $R:R^{*}\neq (0)$ and $R$ is an LPVD, and let $I=R:R^{*}$.  Then $R$ is a TAF-ring by Proposition \ref{tafmori}, so $R^{*}$ is a Dedekind domain and $R\subseteq R^{*}$ is a unibranched extension by Corollary \ref{triv58}. If $I=R$, then $R=R^{*}$ is a GPVD. Suppose that $I$ is a proper ideal of $R$. Since $R^{*}$ is a fractional ideal of $R$, $R_{S}:(R^{*})_{S}=I_{S}$ for each multiplicatively closed subset $S$ of $R$ by Theorem \ref{theo34}.(1). On the other hand, since $R^{*}$ is a Dedekind domain, there exist $a_{1},\dots, a_{n}\in\mathbb{N}$ and maximal ideals $N_{1},\dots, N_{n}$ of $R^{*}$ such that $I=N_{1}^{a_{1}}\cdots N_{n}^{a_{n}}$. Let $M_{i}=N_{i}\cap R$ for each $i\in\{1,\dots, n\}$. Then $\{M_{1},\dots, M_{n}\}$ is the set of maximal ideals of $R$ containing $I$, and $IR_{M_{i}}=R_{M_{i}}:(R^{*})_{R\setminus M_{i}}=R_{M_{i}}:(R_{M_{i}})^{*}=M_{i}R_{M_{i}}$ by Theorem \ref{bh96t}.(3) and Lemma \ref{mc1}.(3). Thus $a_{i}=1$ for each $i$, and $I$ is an intersection of maximal ideals of $R^{*}$. Thus $I$ is a common radical ideal of $R$ and $R^{*}$. Moreover, $I=M_{1}\cap\cdots\cap M_{n}$, so each prime ideal of $R^{*}$ (respectively, $R$) which contains $I$ is a maximal ideal of $R^{*}$ (respectively, $R$). 
We conclude that $R$ is a GPVD with $R^{*}$ its associated Pr{\"u}fer domain.
\end{proof}

Recall that a ring in which each proper ideal is a finite intersection of primary ideals is said to be \textit{Laskerian}. A Laskerian ring in which each primary ideal contains a power of its radical is said to be \textit{strongly Laskerian}. 
In the next lemma, we record that an LPVD is Mori if and only if it is strongly Laskerian, extending a result of Barucci \cite[Corollary 3.7]{b83}.

\begin{lemm}
\label{morilas}
The following are equivalent when $R$ is an LPVD.
\begin{enumerate}
\item $R$ is strongly Laskerian.
    \item $R$ is finite-absorbing.
    \item $R$ is Mori.
\end{enumerate}
\end{lemm}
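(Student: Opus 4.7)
The plan is to establish $(1)\Leftrightarrow(3)$ as the LPVD extension of Barucci's \cite[Corollary 3.7]{b83}, and to connect $(2)$ via the TAF-domain characterization from Proposition \ref{tafmori}. For $(1)\Rightarrow(3)$, I would localize: each $R_{M}$ is a strongly Laskerian PVD, hence a Mori PVD by Barucci. Corollary \ref{moria} then forces $R_{M}$ to be one-dimensional, so $R$ is one-dimensional. In a one-dimensional strongly Laskerian domain, the radical of each principal ideal is a finite intersection of maximal ideals, giving finite character, and Theorem \ref{theo34}.(4) yields that $R$ is Mori. Conversely, for $(3)\Rightarrow(1)$, Proposition \ref{tafmori} gives that $R$ is a TAF-domain, one-dimensional of finite character with each $R_{M}$ a Mori PVD, and Corollary \ref{moria} describes the ideals of $R_{M}$ explicitly: each proper ideal is either principal or a power of $MR_{M}$, and contains a power of $MR_{M}$ in either case. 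Assembling the local data via the finite-character primary decomposition gives strong Laskerianness globally.

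For $(3)\Rightarrow(2)$, the same local analysis shows each primary component $Q_{i}$ of an ideal $I$ of $R$ is $n_{i}$-absorbing by a direct valuation computation on the DVR $V_{M_{i}}:=M_{i}R_{M_{i}}:M_{i}R_{M_{i}}$, and since the $Q_{i}$ are pairwise comaximal, $I=Q_{1}\cdots Q_{r}$ should be $(n_{1}+\cdots+n_{r})$-absorbing by a straightforward induction using comaximality. For $(2)\Rightarrow(3)$, localization gives that each $R_{M}$ is a finite-absorbing PVD. Since primes in a PVD are totally ordered (Theorem \ref{theo1}), every proper ideal of $R_{M}$ is primary, and finite-absorbingness forces the associated valuation overring to be a DVR---a non-discrete valuation would produce ideals with arbitrarily small valuation gaps, hence unbounded $n$-absorbing invariants. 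By Corollary \ref{moria}, $R_{M}$ is a Mori PVD, and one-dimensionality of $R$ together with the local bounded primary decomposition yields finite character, so $R$ is Mori by Theorem \ref{theo34}.(4).

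The main obstacle will be the valuation-theoretic bookkeeping required for both directions involving $(2)$: for $(3)\Rightarrow(2)$, verifying that comaximal products of $n_{i}$-absorbing ideals are $(\sum n_{i})$-absorbing (a restricted case of the open Anderson-Badawi conjecture that is tractable here thanks to the DVR structure of each $V_{M_{i}}$), and for $(2)\Rightarrow(3)$, exhibiting explicit families of ideals with unbounded $\omega_{R_{M}}$ in the non-DVR case to rule out non-discrete associated valuation overrings. Both steps rely on the pullback description of PVDs established in Proposition \ref{pullback1} and the structure of ideals from Corollary \ref{moria}.
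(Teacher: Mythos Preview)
Your approach is substantially more hands-on than the paper's, which simply cites three results from \cite{c211}: Lemma~19 for $(1)\Rightarrow(2)$ (strongly Laskerian $\Rightarrow$ finite-absorbing in general), Lemma~30 and Corollary~54 together with Corollary~\ref{accp0} for $(2)\Rightarrow(3)$, and Proposition~\ref{tafmori} for the remaining implication. Your route via explicit localization, Barucci's PVD result, and valuation bookkeeping is self-contained and would work, but three points deserve attention.

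First, your structural claim in $(3)\Rightarrow(1)$ that ``each proper ideal [of $R_{M}$] is either principal or a power of $MR_{M}$'' is false: in $R=L+XF[[X]]$ with $L\subsetneq F$ a field extension, the ideal $X(L+L\alpha)+X^{2}F[[X]]$ for $\alpha\in F\setminus L$ is neither principal nor a power of $M$. What you actually need---and what Lemma~\ref{step4} together with the DVR structure of $V$ gives---is only that every nonzero ideal \emph{contains} a power of $MR_{M}$, which suffices for local strong Laskerianness.

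Second, your separate argument for $(3)\Rightarrow(2)$ via comaximal products of $n_{i}$-absorbing primaries is doing unnecessary work and, as you note, brushes against an Anderson--Badawi-type statement. Once you have $(3)\Rightarrow(1)$, the general implication $(1)\Rightarrow(2)$ (strongly Laskerian $\Rightarrow$ finite-absorbing, valid in any ring; this is \cite[Lemma~19]{c211}) closes the cycle with no further valuation computation. If you want a self-contained proof of that general fact, it is short: given $I=\bigcap Q_{i}$ with $P_{i}^{n_{i}}\subseteq Q_{i}$, any product of $N+1=\sum n_{i}+1$ elements landing in $I$ must, by pigeonhole over the $P_{i}$, admit an $N$-subproduct lying in each $Q_{i}$.

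Third, in $(2)\Rightarrow(3)$ your sketch ``non-discrete valuation $\Rightarrow$ unbounded $\omega$'' is correct but must be executed with elements of $R$, not $V$. This is fine because $M\subseteq R$: for rank one non-discrete, pick $a\in M$ with $v(a)$ arbitrarily small and use powers; for rank $\geq 2$, use elements of $M$ with valuations $(1,-n)$ and $(0,1)$ in a $\mathbb{Z}^{2}$-lex model. The paper sidesteps this entirely via \cite[Corollary~54]{c211}, which together with Corollary~\ref{accp0} handles the local case directly.
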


\begin{proof}
(1)$\Rightarrow$(2): This is \cite[Lemma 19]{c211}.\\
\\
(2)$\Rightarrow$(3): Note that by \cite[Lemma 30]{c211} and Proposition \ref{tafmori}, we may assume that $R$ is a PVD with maximal ideal $M$. But then the conclusion follows from \cite[Corollary 54]{c211} and Corollary \ref{accp0}.\\
\\
(3)$\Rightarrow$(2): Follows from Proposition \ref{tafmori}.
\end{proof}


As we have seen from Proposition \ref{tafmori}, TAF-domains have some interesting ring-theoretic properties. On the other hand, in \cite{mad18}, the authors focus on Noetherian TAF-domains. In particular, they characterized when a Noetherian domain $R$ with $R:R'\neq (0)$ is a TAF-domain \cite[Corollary 4.10]{mad18}. Note that if $R$ is a Noetherian domain, then $R$ is Mori and $R'=R^{*}$. The next theorem is motivated by this observation. Recall that a ring is \textit{reduced} if its zero ideal is a radical ideal.


\begin{theorem}
\label{tafpullback}
(cf. \cite[Corollary 4.10]{mad18}) 
Let $R$ be an integral domain that is not a field. Then the following are equivalent.
\begin{enumerate}
\item $R$ is a TAF-domain  such that $R:R^{*}\neq (0)$.
\item $R$ is a Mori GPVD.
\item There exists a Dedekind domain $T$, distinct maximal ideals $N_{1},\dots, N_{n}$ of $T$ and field extensions $K_{i}\subseteq T/N_{i}$  such that $R$ is a pullback domain $\pi^{-1}(\prod\limits_{i=1}^{n}K_{i})$ where $\pi: T\to \prod\limits_{i=1}^{n} (T/N_{i})$ is the canonical map.
\item $R^{*}$ is a Dedekind domain and $R/(R:R^{*})\subseteq R^{*}/(R:R^{*})$ is a unibranched extension of Artinian reduced rings.
\end{enumerate}
\end{theorem}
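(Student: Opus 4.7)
The strategy is to verify (1) $\Leftrightarrow$ (2) directly from earlier results, then close the loop via (2) $\Rightarrow$ (3) $\Rightarrow$ (4) $\Rightarrow$ (2). The equivalence (1) $\Leftrightarrow$ (2) is immediate: Proposition \ref{tafmori} identifies TAF-domains with Mori LPVDs, and Lemma \ref{66} identifies Mori GPVDs with Mori LPVDs satisfying $R:R^{*}\neq(0)$.

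For (2) $\Rightarrow$ (3), assume $R$ is a Mori GPVD, so by Lemma \ref{66} its associated Pr\"ufer overring is $T:=R^{*}$, which is Dedekind by Corollary \ref{triv58}, and $A:=R:R^{*}$ is a nonzero radical ideal common to $R$ and $T$. As in the proof of Lemma \ref{66}, $A=N_{1}\cdots N_{n}=N_{1}\cap\cdots\cap N_{n}$ in $T$, where $N_{1},\ldots,N_{n}$ are the maximal ideals of $T$ above $A$, and the unibranched property identifies $M_{i}:=N_{i}\cap R$ as the distinct maximal ideals of $R$ above $A$. The Chinese Remainder Theorem in $T$ yields $T/A\cong\prod T/N_{i}$, and the Chinese Remainder Theorem in $R$ (the $M_{i}$ being pairwise comaximal) yields $R/A\cong\prod K_{i}$ with $K_{i}:=R/M_{i}$, so that the inclusion $R/A\hookrightarrow T/A$ is the componentwise extension $\prod K_{i}\hookrightarrow\prod T/N_{i}$. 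Hence $R=\pi^{-1}(\prod K_{i})$.

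For (3) $\Rightarrow$ (4), set $A:=\ker\pi=\bigcap N_{i}$. After deleting factors with $K_{i}=T/N_{i}$ (which leaves $R$ unchanged), we may assume $K_{i}\subsetneq T/N_{i}$ for all $i$. Since $A$ is a nonzero ideal of $T$ contained in $R$, choosing $0\neq a\in A$ gives $T\subseteq(1/a)R$, so $R$ and $T$ share the quotient field $K$; moreover $at^{k}\in A\subseteq R$ for every $t\in T$ and $k\in\mathbb{N}$, giving $T\subseteq R^{*}$, while $R^{*}\subseteq T^{*}=T$ because the Dedekind domain $T$ is completely integrally closed. Thus $R^{*}=T$. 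For the conductor, $A\subseteq R:R^{*}$ is clear; conversely, any ideal of $T$ strictly containing $A$ surjects onto some $T/N_{j}$ under $T/A\cong\prod T/N_{i}$, which fails to embed into $\prod K_{i}=R/A$ because $K_{j}\subsetneq T/N_{j}$, so $R:R^{*}=A$. Then $R/A\cong\prod K_{i}$ and $R^{*}/A\cong\prod T/N_{i}$ are products of finitely many fields, hence Artinian reduced, and the extension is unibranched because each $T/N_{i}$ is a field.

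For (4) $\Rightarrow$ (2), put $T:=R^{*}$ and $A:=R:R^{*}$; as $R$ is not a field, $A\neq(0)$. Let $N_{1},\ldots,N_{n}$ be the maximal ideals of $T$ containing $A$ (finite in number, corresponding to the maximal ideals of the Artinian ring $T/A$), and $M_{i}:=N_{i}\cap R$; the unibranched hypothesis identifies $\{M_{1},\ldots,M_{n}\}$ with the maximal ideals of $R$ containing $A$. For a maximal ideal $M$ of $R$ not containing $A$, standard pullback theory gives $R_{M}=T_{N}$ (a DVR) for a unique maximal ideal $N$ of $T$ contracting to $M$. For $M=M_{i}$, localizing the pullback at $R\setminus M_{i}$ presents $R_{M_{i}}$ as the pullback of $K_{i}\hookrightarrow T/N_{i}$ over the DVR $T_{N_{i}}$, so Corollary \ref{accp0} makes $R_{M_{i}}$ a Mori PVD. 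Finite character of $R$ follows from that of $T$ together with finiteness of $\{M_{1},\ldots,M_{n}\}$, so Proposition \ref{tafmori} promotes $R$ to a TAF-domain and Lemma \ref{66} completes the cycle. The main obstacle is step (3) $\Rightarrow$ (4), in particular pinning down $R:R^{*}=A$ after the reduction to $K_{i}\subsetneq T/N_{i}$; the parallel local analysis in (4) $\Rightarrow$ (2), where the pullback description must be localized carefully at each $M_{i}$ to recover the PVD structure, is the other technical core.
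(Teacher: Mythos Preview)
Your proof is correct and follows essentially the same route as the paper: both use Proposition~\ref{tafmori} and Lemma~\ref{66} for (1)$\Leftrightarrow$(2), invoke Corollary~\ref{triv58} and the Chinese Remainder Theorem to pass from (2) to the pullback description (3), and rely on the local pullback analysis (via \cite[Lemma~1.1.6]{fhp} and \cite[Proposition~2.6]{ad}) to recover the Mori PVD structure at each maximal ideal. Your cycle closes via (3)$\Rightarrow$(4)$\Rightarrow$(2) whereas the paper proves (3)$\Rightarrow$(2) and (3)$\Rightarrow$(4) simultaneously and then (4)$\Rightarrow$(3); your explicit reduction to the case $K_{i}\subsetneq T/N_{i}$ in (3)$\Rightarrow$(4) makes the conductor identification $R:R^{*}=A$ cleaner than the paper's somewhat terse ``by Chinese remainder theorem it follows that $R:T=N_{1}\cap\cdots\cap N_{n}$,'' but the underlying argument is the same.
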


\begin{proof}
(1)$\Leftrightarrow$(2): Follows from Proposition \ref{tafmori} and Lemma \ref{66}.\\
\\
(2)$\Rightarrow$(3): If (2) holds, then $R$ is a Mori GPVD with $R^{*}$ its  associated Pr{\"u}fer domain by Lemma \ref{66}. Let $T=R^{*}$ and let $A$ be the ideal mentioned in the definition of a GPVD. Since $T=R^{*}$ is a Dedekind domain by Corollary \ref{triv58}, $A$ is a finite product of maximal ideals of $T$, say, $N_{1},\dots, N_{n}$. Note that since $A$ is a radical ideal of a Dedekind domain, $N_{1},\dots, N_{n}$ are distinct maximal ideals of $T$. If we let $M_{i}=N_{i}\cap R$ and $K_{i}=R/M_{i}$ for each $i\in\{1,\dots, n\}$, then $A=\bigcap\limits_{i=1}^{n}N_{i}=\bigcap\limits_{i=1}^{n}M_{i}$, and $R$ is the pullback	of the form stated in (2) (cf. \cite[Theorem 3.1]{df83}). \\
\\
(3)$\Rightarrow$(2) and (3)$\Rightarrow$(4): Suppose that (3) holds. Then $T$ is an overring of $R$, and $R^{*}=T^{*}=T$ by \cite[Lemma 1.1.4.(10)]{fhp} 
 and the fact that Dedekind domains are completely integrally closed. By Chinese remainder theorem it follows that $R:T=N_{1}\cap\cdots\cap N_{n}$. Let $M$ be a maximal ideal of $R$. If $R:T\subseteq M$, then $M=N_{i}\cap R$ for some $i\in\{1,\dots, n\}$ \cite[Lemma 1.1.4.(6)]{fhp}, and $R_{M}$ is the pullback $\pi_{i}^{-1}(K_{i})$ where $\pi_{i}: T_{N_{i}}\to T_{N_{i}}/N_{i}T_{N_{i}}$ is the canonical map (cf. \cite[Lemma 1.1.6]{fhp}). Therefore $R_{M}$ is a PVD by \cite[Proposition 2.6]{ad}. On the other hand, if $R:T\not\subseteq M$, then there exists unique prime ideal $N$ of $T$ such that $M=N\cap R$, and $R_{M}$ is isomorphic to $T_{N}$\cite[Lemma 1.1.4.(3)]{fhp}, which is a DVR. It also follows that $R$ is an LPVD and $R\subseteq T$ is a unibranched extension. Hence by Corollary \ref{triv58}, $R$ is Mori. Now Lemma \ref{66}, $R$ is a GPVD and (2) follows. Since $R\subseteq T$ is unibranched, so is $R/R:T\to T/R:T$. Since $R:T$ is an intersection of $n$ maximal ideals (as an ideal of both $R$ and $T$), $R/R:T$ and $T/R:T$ are both isomorphic to a product of $n$ fields by Chinese remainder theorem, so they are Artinian reduced rings, and (4) follows. \\
 \\
 (4)$\Rightarrow$(3): Assume (4). Since $R/(R:R^{*})$ is Artinian, if $R:R^{*}$ is the zero ideal, then $R$ is a field, which is a contradiction. Therefore $R:R^{*}\neq (0)$. Since $R^{*}$ is a Dedekind domain, we have $R:R^{*}=N_{1}\cdots N_{n}$ for some distinct maximal ideals of $R^{*}$. Letting $T=R^{*}$, we have (3).
\end{proof}


Let $R$ be an integral domain. Given $f\in R[X]$, let $c(f)$ be the ideal of $R$ generated by the coefficients of $f$. Then $N=\{f\in R[X]\mid c(f)=R\}$ and $N_{v}=\{f\in R[X]\mid (c(f))^{v}=R\}$ are  multiplicatively closed subsets of $R[X]$ \cite[Proposition 2.1]{k89}. $R[X]_{N}$ is usually denoted by $R(X)$, and is called the \textit{Nagata ring of }$R$. 

\begin{prop}
\label{nagata2}
The following are equivalent for an integral domain $R$.
\begin{enumerate}
    \item $R(X)$ is a TAF-domain.
     \item Every overring of $R(X)$ is a TAF-domain.
    \item Every overring of $R$ is a TAF-domain.
\end{enumerate}
\end{prop}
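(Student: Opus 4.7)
The direction (2) $\Rightarrow$ (1) is immediate since $R(X)$ is an overring of itself, so my plan is to prove (1) $\Leftrightarrow$ (3) and then (1) $\Rightarrow$ (2). The entire argument hinges on the following auxiliary characterization, which is the Nagata--Mori equivalence for LPVDs promised in the introduction and to be established earlier in Section 3: $R(X)$ is a TAF-domain if and only if $R$ is a Mori LPVD satisfying $R' = R^{*}$.

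Granting this auxiliary fact, the equivalence (1) $\Leftrightarrow$ (3) drops out of Proposition \ref{tafmori} together with Corollary \ref{tafover}. Indeed, (3) forces $R$ itself to be TAF, hence a Mori LPVD, and by Corollary \ref{tafover} the further assertion that every overring of $R$ is TAF is equivalent to $R' = R^{*}$; by the auxiliary fact, the combined condition ``$R$ is a Mori LPVD and $R' = R^{*}$'' is precisely (1).

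For (1) $\Rightarrow$ (2), I will apply Corollary \ref{tafover} directly to the ring $R(X)$. From (1) and the equivalence just established we obtain (3), hence $R' = R^{*}$ is a Dedekind domain by Corollary \ref{tafover}. Combining the classical identity $(R(X))' = R'(X)$ with the preservation of the Pr{\"u}fer property under the Nagata construction, $(R(X))'$ is a Pr{\"u}fer domain. Since $R(X)$ is already a TAF-domain by hypothesis, Corollary \ref{tafover} applied to $R(X)$ now yields that every overring of $R(X)$ is a TAF-domain.

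The main technical obstacle is the auxiliary Nagata--Mori equivalence. For the forward direction, starting from a Mori LPVD $R$ with $R' = R^{*}$, I use the pullback description of each Mori PVD $R_{M}$ (Proposition \ref{pullback1}) together with the standard identification $R(X)_{MR(X)} \cong R_{M}(X)$ to show that $R_{M}(X)$ is a Mori PVD with associated valuation overring $V_{M}(X)$; here $R' = R^{*}$ ensures $R_{M}' = V_{M}$ via Lemma \ref{mc1}, securing compatibility with the Nagata construction. For the reverse direction, Corollary \ref{accp0} lets one descend the Mori PVD property from $R_{M}(X)$ to $R_{M}$, and the classical identity $(R(X))' = R'(X)$ combined with Corollary \ref{tafover} applied to the TAF-domain $R(X)$ forces $R' = R^{*}$.
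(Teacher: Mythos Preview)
Your overall architecture matches the paper's---pivot on Corollary \ref{tafover} so that (3) becomes ``$R$ is a Mori LPVD with $R'=R^{*}$''---but the auxiliary equivalence you invoke is not established earlier in Section 3. In the paper that equivalence \emph{is} Proposition \ref{nagata2} (once translated through Corollary \ref{tafover}); there is no separate prior result. So the burden falls entirely on the sketch in your last paragraph, and that sketch has a genuine gap.

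The missing ingredient is the transfer of the PVD property between $R_{M}$ and $R_{M}(X)$. You write that $R_{M}'=V_{M}$ ``secures compatibility with the Nagata construction'', but the statement that $D(X)$ is a PVD precisely when $D$ is a PVD whose integral closure is a valuation domain is a nontrivial theorem---it is exactly the content of \cite[Corollary 3.9]{c08}, which the paper cites explicitly in both the (1)$\Rightarrow$(3) and (3)$\Rightarrow$(2) directions. Without it, neither half of your sketch goes through: Corollary \ref{accp0} does not by itself let you descend from $R_{M}(X)$ to $R_{M}$, since you would first need to identify $MR_{M}(X):MR_{M}(X)$ with $(MR_{M}:MR_{M})(X)$, which is not automatic for a non-finitely-generated $M$; and your derivation of $R'=R^{*}$ from Corollary \ref{tafover} applied to $R(X)$ is circular, because none of the equivalent conditions there is known for $R(X)$ at that stage. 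The paper obtains $R'$ Pr{\"u}fer (hence $R'=R^{*}$) directly from \cite[Corollary 3.9]{c08}. The paper also treats the Mori property of $R(X)$ differently from your local assembly: for (3)$\Rightarrow$(2) it invokes Proposition \ref{polymori} to get $R[X]$ Mori and then localizes, and for (1)$\Rightarrow$(3) it passes through the overring $R[X]_{N_{v}}$ of $R(X)$ and uses \cite[Proposition 2.8]{k89} to pull the Mori condition back to $R$.
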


\begin{proof}
(1)$\Rightarrow$(3): Assume that $R(X)$ is a TAF-domain. Then $R$ is an LPVD, $R'$ is a Pr{\"u}fer domain and every overring of $R(X)$ is an LPVD \cite[Corollary 3.9]{c08}. Then $R[X]_{N_{v}}$, being an overring of $R(X)$, is a Mori LPVD by Corollary \ref{tafover}. Since a strictly ascending chain of divisorial ideals $\{I_{i}\}_{i\in\mathbb{N}}$ of $R$ induces a strictly ascending chain of divisorial ideals $\{I_{i}[X]_{N_{v}}\}_{i\in\mathbb{N}}$ of $R[X]_{N_{v}}$ by \cite[Proposition 2.8]{k89}, $R$ must be a Mori domain. It follows that each overring of $R$ is a TAF-domain by Corollary \ref{tafover}.\\
\\
(3)$\Rightarrow$(2): Suppose that every overring of $R$ is a TAF-domain. Then by Proposition \ref{polymori}, $R[X]$ is a Mori domain, and so is $R(X)$, being a localization of $R[X]$ (Theorem \ref{theo34}.(3)). On the other hand, $R$ is an LPVD and $R'$ is a Dedekind domain by Corollary \ref{tafover}, so each overring of $R(X)$ is an LPVD by \cite[Corollary 3.9]{c08}. Therefore by Proposition \ref{tafmori} and Corollary \ref{tafover}, each overring of $R(X)$ is a TAF-domain.\\
\\
(2)$\Rightarrow$(1): Trivial.
\end{proof}







\section{TAF-rings and FAF-domains}
The main theorem of this section is Theorem \ref{taf54} which generalizes Proposition \ref{tafmori} to commutative rings with zero divisors. The key part of its proof is taken from \cite[Theorem 5.1]{am92}. 
 A prime ideal $P$ of a commutative ring $R$ is said to be \textit{strongly prime} if $aP$ and $bR$ are comparable for any two elements $a,b$ of $R$. Note that for an integral domain $R$, this definition coincides with the notion of strongly prime ideal in Definition \ref{d1} (cf. \cite[Proposition 3.1]{a79}). Similarly, a ring $R$ is said to be a \textit{pseudo-valuation ring} or a $PVR$ if some maximal ideal of $R$ is strongly prime \cite[Lemma 1 and Theorem 2]{abd97}. A ring $R$ is said to be a \textit{locally pseudo-valuation ring} or an $LPVR$ if $R_{M}$ is a PVR for each maximal ideal $M$ of $R$. 

\begin{lemm}
\label{l54}
Let $R$ be a quasilocal ring with maximal ideal $M$. 
\begin{enumerate}
    \item For any $r,s\in R$ such that $r=rs$, either $r=0$ or $s$ is a unit of $R$.
    \item Let $a$ be a nonzero irreducible element of $R$. If $a=bc$ for some $b,c\in R$, then one of $b$ and $c$ is a unit of $R$.
    \item $R$ is a PVR if and only if for any two ideals $I,J$ of $R$, $I$ and $JM$ are comparable.
    \item Let $R$ be a PVR. Then every ideal of $R$ is comparable to $M^2$, and $M^2=aM$ for each nonzero irreducible element $a$ of $R$.
\end{enumerate}
\end{lemm}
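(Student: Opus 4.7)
The plan is to prove the four parts in order, using the quasilocal hypothesis to upgrade each nonunit to membership in $M$, and freely invoking earlier parts to handle later ones.

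For (1), rewrite $r=rs$ as $r(1-s)=0$. If $s$ is a nonunit, then $s\in M$, so $1-s\notin M$ (otherwise $1=s+(1-s)\in M$), hence $1-s$ is a unit in the quasilocal ring $R$, forcing $r=0$. For (2), given $a=bc$ with $a$ nonzero and irreducible, the definition of irreducibility yields $aR=bR$ or $aR=cR$; say the former, so $b=ax$ for some $x\in R$. Then $a=axc$, and applying (1) with $s=xc$ forces $xc$ to be a unit, whence $c$ is a unit.

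For (3), the implication $(\Leftarrow)$ is immediate by specializing $I=bR$ and $J=aR$, which gives the strong primality of $M$. For $(\Rightarrow)$, assume $M$ is strongly prime and $I\not\subseteq JM$; I claim $JM\subseteq I$. Pick $x\in I\setminus JM$. For each $j\in J$ and $m\in M$, strong primality says $jM$ and $xR$ are comparable. If $xR\subseteq jM$ then $x\in jM\subseteq JM$, contradicting the choice of $x$; hence $jM\subseteq xR\subseteq I$, so $jm\in I$. Writing any element of $JM$ as a finite sum of products $jm$ then gives $JM\subseteq I$.

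For (4), the first assertion follows from (3) by taking $J=M$. For the second, an irreducible element is a nonunit, so a nonzero irreducible $a$ lies in $M$, whence $aM\subseteq M^2$. To prove $M^2\subseteq aM$, fix $x,y\in M$ and compare $aM$ with $xR$ using strong primality. If $xR\subseteq aM$, write $x=ar$ with $r\in M$; then $xy=a(ry)\in aM$. Otherwise $aM\subseteq xR$, and I then compare $aR$ with $xM$. The case $aR\subseteq xM$ yields $a=xm$ with $x,m\in M$, contradicting (2) since neither factor is a unit. Hence $xM\subseteq aR$, so $xy=ar$ for some $r\in R$; if $r$ were a unit, then $a=x(yr^{-1})$ would again violate (2), since $x$ and $y$ are both nonunits. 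Thus $r\in M$ and $xy\in aM$, completing $M^2=aM$. The main obstacle is this last step: one application of strong primality does not suffice to place $xy$ in $aM$, and the key trick is to apply it twice, once to $\{aM,xR\}$ and once to $\{aR,xM\}$, ruling out the two bad subcases via (2).
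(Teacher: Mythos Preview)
Your proof is correct. Parts (1), (2), and the first assertion of (4) match the paper essentially verbatim. The differences are as follows. For (3), the paper simply cites \cite[Theorem 5]{abd97}, whereas you supply a short self-contained argument; your proof of the nontrivial direction (strong primality of $M$ implies comparability of $I$ and $JM$) via picking $x\in I\setminus JM$ and showing $jM\subseteq xR$ for every $j\in J$ is clean and correct. For the second assertion of (4), the paper argues globally: it shows $bM\subseteq aR$ for every $b\in M$ (since $a\notin bM$ by (2)), concludes $M^{2}\subseteq aR$, writes $M^{2}=aI$ for some ideal $I$, and observes that $I$ must be proper, hence $M^{2}=aI\subseteq aM$. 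Your approach is elementwise: for each product $xy$ with $x,y\in M$ you apply strong primality twice (to $\{aM,xR\}$ and then to $\{aR,xM\}$), killing the two bad subcases with (2). Your route is slightly longer but more explicit about why the obstruction $a\in M^{2}$ cannot arise; the paper's route is slicker but leaves the verification that $M^{2}\neq aR$ implicit.
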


\begin{proof}
(1): If $r=rs$, then $r(1-s)=0$. If $s$ is not a unit of $R$, then $1-s$ is a unit of $R$ since $R$ is quasilocal. Therefore $r=0$.\\
\\
(2): If $a=bc$, then without loss of generality we have $aR=bR$, and $b=ar$ for some $r\in R$. Now $a=arc$, so $c$ is a unit of $R$ by (1). \\
\\
(3) This follows from \cite[Theorem 5]{abd97}.\\
\\
(4) 
The first assertion follows from (3). For the second assertion, let $a$ be a nonzero irreducible element of $R$. Then for each $b\in M$, $a\not\in bM$ by (2). Since $R$ is a PVR, we must have $bM\subseteq aR$, from which it follows that $M^2\subseteq aR$. Then $M^2=aI$ for some ideal $I$ of $R$. Since $M^2\neq aR$, $I\subseteq M$ and $aI\subseteq aM\subseteq M^2$. Therefore $M^2=aM$.
\end{proof}

In the next lemma, we extend Corollary \ref{accp0} and part of \cite[Theorem 4.3]{mad18} to commutative rings with zero divisors.

\begin{lemm}
\label{l56}
Let $R$ be a quasilocal ring with maximal ideal $M$. Then the following are equivalent.
\begin{enumerate}
    \item $R$ is a TAF-ring.
    \item $R$ is strongly Laskerian and every ideal of $R$ is comparable to $M^2$.
    \item $R$ is a strongly Laskerian PVR.
    \item $R$ is a PVR that satisfies the ascending chain condition on principal ideals.
    \item $R$ is an atomic PVR.
    \item $R$ is an atomic ring, and for each nonzero proper ideal $I$ of $R$ there exists $n\in\mathbb{N}$ such that $M^{n}\subseteq I\subsetneq M^{n-1}$.
\end{enumerate}
\end{lemm}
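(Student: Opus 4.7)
The plan is to follow the cycle \((1) \Rightarrow (2) \Rightarrow (3) \Rightarrow (4) \Rightarrow (5) \Rightarrow (6) \Rightarrow (1)\), mirroring Corollary \ref{accp0} but systematically replacing each PVD step by its PVR analogue from Lemma \ref{l54}, so that the zero-divisor case is handled by parts (1)--(2) of that lemma wherever atomicity was invoked in the integral-domain proof.

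The links \((3) \Rightarrow (4)\) and \((4) \Rightarrow (5)\) are general: strongly Laskerian forces ACCP, and ACCP yields atomic (the key point that an irreducible cannot split nontrivially in the quasilocal ring is Lemma \ref{l54}(2)). For \((5) \Rightarrow (6)\), I fix a nonzero irreducible \(a\) and apply Lemma \ref{l54}(4) to get \(M^2 = aM\); by induction \(M^n = a^{n-1}M\) for all \(n \ge 1\). Using Lemma \ref{l54}(2) together with the PVR-comparability (Lemma \ref{l54}(3)), each irreducible of \(R\) is an associate of \(a\), so each nonzero nonunit of \(R\) has the form \(u a^k\); this gives \(\bigcap_n M^n = 0\), and since the PVR property makes \(I\) comparable with every \(M^k\), the unique smallest \(n\) with \(M^n \subseteq I\) realises \(M^n \subseteq I \subsetneq M^{n-1}\). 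For \((1) \Rightarrow (2)\), I invoke the ring-theoretic version of \cite[Lemma 19]{c211} (already used in Lemma \ref{morilas}) to get strongly Laskerian, and deduce \(M^2\)-comparability by factoring an arbitrary proper ideal as a finite product of 2-absorbing ideals and applying the \((6) \Rightarrow (1)\) analysis below to each factor. Finally, \((2) \Rightarrow (3)\) uses Lemma \ref{l54}(3): for \(J \subseteq M\), \(JM \subseteq M^2\), and the strongly Laskerian hypothesis promotes \(M^2\)-comparability to full \(JM\)-comparability via an induction on the length of a primary decomposition of \(J\).

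The core of the argument is \((6) \Rightarrow (1)\). First, any ideal \(I\) with \(M^2 \subseteq I\) is automatically 2-absorbing in a quasilocal ring: if \(abc \in I\) and one factor is a unit, reduce to a two-factor check, while if all three lie in \(M\) then \(ab \in M^2 \subseteq I\). For a general \(I\) with \(M^n \subseteq I \subsetneq M^{n-1}\) and \(n \ge 3\), the containment \(I \subseteq M^{n-1} = a^{n-2}M \subseteq a^{n-2}R\) lets me define \(J = \{r \in R : a^{n-2}r \in I\}\), giving \(I = a^{n-2}J\); I then verify \(M^2 \subseteq J \subsetneq M\), which yields \(I = (aR)^{n-2}J\), a product of 2-absorbing ideals since each \(aR \supseteq aM = M^2\) and \(J \supseteq M^2\). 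The main obstacle I anticipate is precisely this zero-divisor step: when \(a^{n-2}\) is a zero-divisor, \(I = a^{n-2}J\) does not determine \(J\) uniquely, so one must check that \(0 : a^{n-2}\) does not drag \(J\) below the correct level of the \(M\)-filtration. Lemma \ref{l54}(1) is the tool for this, since it pins every multiplicative relation \(r = rs\) to either \(r = 0\) or \(s\) a unit, and in the atomic PVR this forces the annihilator \(0 : a\) to coincide with a high power of \(a\) whose position in the filtration is controlled by the nilpotency of \(M\).
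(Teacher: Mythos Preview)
Your cycle has the right shape, and your $(6)\Rightarrow(1)$ factorization $I=(aR)^{n-2}J$ with $M^{2}\subseteq J$ is a clean direct argument (the paper instead routes through $(6)\Rightarrow(5)\Rightarrow(1)$, splitting the last step into the domain case via Corollary~\ref{accp0} and the $M$-nilpotent case via \cite[Proposition 33]{c211}). However, there are genuine gaps.

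\textbf{False step in $(5)\Rightarrow(6)$.} The claim that every irreducible of an atomic PVR is an associate of a fixed irreducible $a$ is wrong. Take $R=F+XL[[X]]$ with $F\subsetneq L$ fields: this is a Mori PVD (hence an atomic PVR), its irreducibles are $uX$ with $u\in L^{*}$, and $uX$, $vX$ are associates iff $u/v\in F^{*}$. So the deduction that every nonzero nonunit is $ua^{k}$ collapses. The conclusion $\bigcap_{n}M^{n}=0$ is still true, but you need a different argument: write a nonzero $x\in\bigcap_{n}M^{n}$ as a product of $k$ irreducibles, so $xM=M^{k+1}$ by Lemma~\ref{l54}(4); then $x\in M^{k+2}=xM^{2}$ forces $x=0$ by Lemma~\ref{l54}(1). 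The paper avoids all this by picking a nonzero $a\in I$, factoring it as $a_{1}\cdots a_{m}$, and observing $M^{m+1}=aM\subseteq I$ directly.

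\textbf{Gaps in $(1)\Rightarrow(2)$ and $(2)\Rightarrow(3)$.} The citation \cite[Lemma 19]{c211} gives strongly Laskerian $\Rightarrow$ finite-absorbing, which is the wrong direction; you need TAF $\Rightarrow$ strongly Laskerian, which the paper gets from \cite[Proposition 33]{c211}. Your plan to deduce $M^{2}$-comparability by ``applying the $(6)\Rightarrow(1)$ analysis to each $2$-absorbing factor'' is backwards: that analysis shows $M^{2}\subseteq I\Rightarrow I$ is $2$-absorbing, whereas here you need the converse, and a $2$-absorbing ideal with nonmaximal radical need not contain $M^{2}$ (you would first have to prove $\operatorname{Spec}(R)$ is a chain, which is essentially the PVR property you are after). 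Likewise, ``induction on the length of a primary decomposition of $J$'' in $(2)\Rightarrow(3)$ is not a proof: the paper instead shows directly that every nonzero irreducible $a$ satisfies $aM=M^{2}$ under hypothesis (2), hence $a_{1}\cdots a_{n}M=M^{n+1}$, and then compares $aM$ with $bR$ via the factorization lengths of $a$ and $b$. Finally, your $(6)\Rightarrow(1)$ quietly uses $M^{n-1}=a^{n-2}M$, which rests on Lemma~\ref{l54}(4) and hence on the PVR property; you must first insert the quick step $(6)\Rightarrow(5)$ (as the paper does) before this is available.
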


\begin{proof}
Note that strongly Laskerian rings satisfy the ascending chain condition on principal ideals \cite[Corollary 3.6.(b)]{hl83}, and every ring that satisfies the ascending chain condition on principal ideals is atomic  \cite[Theorem 3.2]{av96}. Therefore we have $(3)\Rightarrow (4)\Rightarrow (5)$.\\
\\
$(5)\Rightarrow (6)$: Let $R$ be an atomic PVR and $I$ a nonzero proper ideal of $R$. Choose a nonzero $a\in I$. Since $R$ is atomic, $a=a_{1}\cdots a_{n}$ for some (nonzero) irreducible elements $a_{1},\dots, a_{m}$ of $R$. Then $M^{m+1}=aM\subseteq I$, where the first equality follows from Lemma \ref{l54}.(3). Thus there exists the smallest $n\in\mathbb{N}$ such that $M^{n}\subseteq I$. If $n=1$, then we are done. If $n\ge 2$, then $M^{n-2}M=M^{n-1}\not\subseteq I$, so $I\subsetneq M^{n-2}M=M^{n-1}$ by Lemma \ref{l54}.(3).\\
\\
$(6)\Rightarrow (5)$: Assume $(6)$, and let $a,b\in R$. Then we only need to show that $aM$ and $bR$ are comparable. We may assume that $a,b$ are nonzero nonunits of $R$. Then $M^{n}\subseteq aR\subsetneq M^{n-1}$ and $M^{m}\subseteq bR\subsetneq M^{m-1}$ for some $n,m\in\mathbb{N}$. If $n<m$, then $bR\subsetneq M^{m-1}\subseteq M^{n}\subseteq aR$. It follows that $bR=aI$ for some proper ideal $I$ of $R$, so $bR\subseteq aM$. On the other hand, if $n\ge m$, then $aM\subseteq M^{n}\subseteq M^{m}\subseteq bR$.\\ 
\\
$(5)\Rightarrow(1)$: Suppose that $(5)$ holds. If $0$ is irreducible in $R$, then $R$ is an integral domain and the conclusion follows from Corollary \ref{accp0} and Proposition \ref{tafmori}. Assume that $0$ is not irreducible in $R$. Since $R$ is atomic, $0=a_{1}\cdots a_{n}$ for some (nonzero) irreducible elements $a_{1},\dots, a_{n}$ of $R$. Then $M^{n+1}=a_{1}\cdots a_{n}M=(0)$, where the first equality follows from Lemma \ref{l54}.(3). Thus $M$ is nilpotent, and $R$ is finite-absorbing \cite[Theorem 27]{c211}. Since every ideal of $R$ is comparable to $M^2$ by Lemma \ref{l54}.(3), $R$ is a TAF-ring \cite[Proposition 33]{c211}.\\
\\
$(1)\Rightarrow (2)$: \cite[Proposition 33]{c211}.\\
\\
$(2)\Rightarrow(3)$. Suppose that (2) holds. If $M=M^2$, then $M=\{x\in R\mid x\in xM\}$ \cite[Exercise 29.(d), Chapter IV, \S 2]{Bourbaki}, so $M=(0)$ and $R$ is a field. Hence we may assume that $M\neq M^2$. Notice that by Lemma \ref{l54}.(3), given irreducible elements $a_{1},\dots, a_{n}$ of $R$, $a_{1}\cdots a_{n}M=M^{n+1}$. Now choose $a,b\in R$. We need to show that $aM$ and $bR$ are comparable. We may assume that $a, b\in M\setminus\{0\}$. Note that $R$ is atomic as mentioned in the beginning of this proof, so there exist irreducible elements $a_{1},\dots, a_{n}, b_{1},\dots, b_{m}$ of $R$ such that $a=a_{1}\cdots a_{n}$ and $b=b_{1}\cdots b_{m}$. Suppose that $n<m$. Then $bR=b_{1}\cdots b_{m}R\subseteq M^{m}\subseteq M^{n+1}=a_{1}\cdots a_{n}M=aM$. On the other hand, if $n\ge m$, then $aM=a_{1}\cdots a_{n}M=M^{n+1}\subseteq M^{m+1}=b_{1}\cdots b_{m}M\subseteq b_{1}\cdots b_{m}R=bR$. Hence $M$ is strongly prime, and $R$ is a PVR \cite[Theorem 2]{abd97}.
\end{proof}

Now we can derive the promised result.

\begin{theorem}
(cf. \cite[Theorem 39.2]{G})
\label{taf54}Let $R$ be a ring. Then the following are equivalent.
\begin{enumerate}
    \item $R$ is a TAF-ring.
        \item $R$ is strongly Laskerian, and for each maximal ideal $M$ of $R$, every $M$-primary ideal of $R$ is comparable to $M^2$.
            \item $R$ is a strongly Laskerian LPVR.
    \item $R$ is a finite-absorbing LPVR.
    \item $R=R_{1}\times\cdots\times R_{r}$, where $R_{i}$ is either an atomic PVR or a Mori LPVD for each $i\in\{1,\dots, r\}$.
\end{enumerate}
\end{theorem}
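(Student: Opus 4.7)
The plan is to prove the equivalences cyclically as $(5) \Rightarrow (1) \Rightarrow (2) \Rightarrow (3) \Rightarrow (4) \Rightarrow (5)$, with the last implication carrying the main weight. For $(5) \Rightarrow (1)$, atomic PVRs are TAF-rings by Lemma \ref{l56}, Mori LPVDs are TAF-rings by Proposition \ref{tafmori}, and a finite direct product of TAF-rings is TAF. For $(1) \Rightarrow (2)$, I would invoke from \cite{c211} that every TAF-ring is strongly Laskerian, then localize at each maximal $M$: $R_M$ remains a quasilocal TAF-ring so Lemma \ref{l56} gives that every ideal of $R_M$ is comparable to $(MR_M)^2$, and since $M$-primary ideals of $R$ correspond bijectively to $MR_M$-primary ideals of $R_M$, the condition descends. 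The implication $(2) \Rightarrow (3)$ localizes the two hypotheses to $R_M$ and applies Lemma \ref{l56} to identify $R_M$ as a PVR, making $R$ an LPVR, while $(3) \Rightarrow (4)$ uses that strongly Laskerian rings are finite-absorbing, as invoked in the proof of Lemma \ref{morilas}.

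The main work is $(4) \Rightarrow (5)$. Let $R$ be a finite-absorbing LPVR. For each maximal $M$, $R_M$ is a quasilocal finite-absorbing PVR, hence an atomic PVR by Lemma \ref{l56}, and in particular its prime spectrum is totally ordered --- either $R_M$ is a PVD with totally ordered primes by \cite[Corollary 1.3]{HH}, or $MR_M$ is nilpotent and is the unique prime. Since $R$ is finite-absorbing, the zero ideal is $n$-absorbing for some $n$, and an $n$-absorbing ideal has only finitely many minimal primes by \cite{Anderson}, so $R$ has finitely many minimal primes $P_1, \dots, P_r$. I claim these are pairwise comaximal: if $P_i + P_j \subseteq M$ for some maximal $M$ with $i \neq j$, then $P_i R_M$ and $P_j R_M$ would be distinct minimal primes in the totally ordered $\Spec(R_M)$, a contradiction. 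Since $\mathfrak{n}(R) = \bigcap_i P_i$ is a nil ideal, the Chinese Remainder Theorem yields $R/\mathfrak{n}(R) \cong \prod_{i=1}^{r} R/P_i$; lifting the primitive idempotents of this product through the nil ideal $\mathfrak{n}(R)$ produces orthogonal primitive idempotents $e_1, \dots, e_r \in R$ with $\sum e_i = 1$, so $R = Re_1 \times \cdots \times Re_r$.

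Each factor $R_i := Re_i$ is an indecomposable finite-absorbing LPVR with a unique minimal prime. If $R_i$ is a domain, then Lemma \ref{morilas} and Proposition \ref{tafmori} identify it as a Mori LPVD. Otherwise $R_i$ has a nonzero nilpotent minimal prime $P$: some localization $(R_i)_M$ must then have zero divisors (else every $r \in P$ would have annihilator not contained in any maximal ideal of $R_i$, forcing $P = 0$), and at such an $M$, $M(R_i)_M$ is nilpotent by Lemma \ref{l56}, so $M$ is the only prime of $R_i$ contained in $M$; combined with $P \subseteq M$ and the primality of $P$, this forces $P = M$, and since every prime of $R_i$ contains the unique minimal prime $P$, we conclude that $P$ is the only maximal ideal, making $R_i$ a quasilocal atomic PVR. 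The main obstacle will be the pairwise-comaximality step together with this final quasilocality argument for the non-domain factors, both of which depend crucially on the totally ordered local spectra supplied by Lemma \ref{l56}.
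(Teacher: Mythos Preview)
Your argument is sound, but it is organized differently from the paper's, and there is one citation gap worth flagging.

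\textbf{Comparison with the paper.} The paper closes the cycle as $(1)\Rightarrow(2)\Rightarrow(3)\Leftrightarrow(4)\Rightarrow(1)$ and then handles $(1)\Leftrightarrow(5)$ separately by invoking \cite[Proposition 2.4, Theorem 3.3]{mad18} together with Proposition~\ref{tafmori} and Lemma~\ref{l56}; in particular, the decomposition into indecomposable factors is outsourced to \cite{mad18}. Your route instead makes $(4)\Rightarrow(5)$ the load-bearing step and supplies a self-contained argument: finitely many minimal primes via \cite[Theorem 2.5]{Anderson}, pairwise comaximality from the totally ordered local spectra of a PVR, Chinese Remainder plus idempotent lifting through $\mathrm{Nil}(R)$, and then a case analysis on the factors. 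This is a genuinely different decomposition of the work. What your approach buys is independence from the structure theorem in \cite{mad18}; what the paper's approach buys is brevity. Your steps $(1)\Rightarrow(2)\Rightarrow(3)$ are essentially the same as the paper's (localize and feed into Lemma~\ref{l56}).

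\textbf{One gap to patch.} In your $(4)\Rightarrow(5)$ you write that $R_M$ is ``a quasilocal finite-absorbing PVR, hence an atomic PVR by Lemma~\ref{l56}''. But Lemma~\ref{l56} does not list ``finite-absorbing PVR'' among its equivalent conditions; its inputs are strongly Laskerian, ACCP, atomic, or TAF. You need an extra bridge: a PVR is divided \cite[Lemma 1.(a)]{abd97}, and for (locally) divided rings finite-absorbing is equivalent to strongly Laskerian by \cite[Lemma 20.(2)]{c211}, which is exactly what the paper invokes for its $(3)\Leftrightarrow(4)$. Once you insert that, $R_M$ is a strongly Laskerian PVR and Lemma~\ref{l56} applies; the rest of your $(4)\Rightarrow(5)$ (comaximal minimal primes, idempotent lifting, and the dichotomy on the factors $R_i$) goes through as written.
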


\begin{proof}
We may assume that $R$ is not a field. \\
\\
(1)$\Rightarrow$(2): Follows from \cite[Proposition 33]{c211}.\\
\\
(2)$\Rightarrow$(3): We mimic the proof  of \cite[Theorem 5.1]{am92}. Note first that if a ring $R$ satisfies $(2)$, then so is $R_{M}$ for each maximal ideal $M$ of $R$. 
Hence we may assume that $R$ is a quasilocal ring that satisfies (2), and the conclusion follows from Lemma \ref{l56}.\\
\\
(3)$\Leftrightarrow$(4): Note that every LPVR is locally divided \cite[Lemma 1.(a)]{abd97}. Hence (3)$\Leftrightarrow$(4) follows from \cite[Lemma 20.(2)] {c211}.\\
\\
(4)$\Rightarrow$(1): By \cite[Theorem 2.5]{Anderson} and \cite[Corollary 32]{c211}, we may assume that $R$ is quasilocal, so Lemma \ref{l56} yields the conclusion.\\
\\
(1)$
\Leftrightarrow$(5): Follows from \cite[Proposition 2.4, Theorem 3.3]{mad18}, Proposition \ref{tafmori} and Lemma \ref{l56}.
\end{proof}



In \cite{adk}, the authors introduced the \textit{AF-dimension} of a ring $R$, denoted by AF-dim$(R)$, which is the minimum positive integer $n$ such that every proper ideal of $R$ can be written as a finite product of $n$-absorbing ideals of $R$ (if such $n$ does not exist, set AF-dim$(R)=\infty$). We call $R$ an \textit{FAF-ring} (finite absorbing factorization ring) if AF-dim$(R)$ is finite. An integral domain that is also an FAF-ring will be called an \textit{FAF-domain}. The authors of \cite{adk} themselves presented several examples of rings and computed their AF-dimensions. All of the rings considered in such examples, however, were Noetherian. In the remainder of this section, motivated by the result that an integral domain $R$ is a Mori LPVD if and only if $\textnormal{AF-dim}(R)\le 2$ (Proposition \ref{tafmori}), we show that  $\textnormal{AF-dim}(R)\le 3$ whenever $R$ is a Mori LAPVD, and construct a non-Noetherian  example that attains the equality. We also prove that an integral domain $R$ may have AF-dimension 3 without being Mori.

The following result enables us to compute the AF-dimension of a domain locally.

\begin{lemm}
\label{c31c}
(cf. \cite[Corollary 31]{c211}, \cite[Theorem 4.3]{adk})
Let $R$ be an integral domain and $n\in\mathbb{N}$. Then the following are equivalent.
\begin{enumerate}
    \item $\textnormal{AF-dim}(R)\le n$.
    \item $R$ is of finite character and $\textnormal{AF-dim}(R_{M})\le n$ for each maximal ideal $M$ of $R$.
\end{enumerate}
\end{lemm}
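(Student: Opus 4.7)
The plan is to adapt the arguments of \cite[Corollary 31]{c211} (which handles the $n=2$ case) and \cite[Theorem 4.3]{adk} (which handles the Noetherian setting) to the stated generality. Three background facts will play a central role: (a) $n$-absorbingness is preserved under both extension by localization and contraction along ring homomorphisms, and products of ideals commute with localization; (b) by the Anderson--Badawi bound from \cite{Anderson}, any $n$-absorbing ideal of a ring has at most $n$ minimal primes; (c) every integral domain of finite AF-dimension has Krull dimension at most $1$, a fact I would either cite from \cite{adk} or extract by reducing to the quasilocal case and imitating the Noetherian argument.

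For $(1)\Rightarrow(2)$, the localization half is routine: any global factorization $I=I_1\cdots I_k$ into $n$-absorbing ideals yields $IR_M=(I_1R_M)\cdots(I_kR_M)$, still into $n$-absorbing ideals of $R_M$, so $\textnormal{AF-dim}(R_M)\le n$. For finite character, I would fix a nonzero $a\in R$ and write $aR=I_1\cdots I_k$ with each $I_j$ nonzero (since $R$ is a domain) and $n$-absorbing. Any maximal ideal $M$ containing $a$ contains some $I_j$ because $M$ is prime, and therefore contains a minimal prime of $I_j$; by (c) such a minimal prime is itself maximal, and by (b) there are at most $n$ of them per $I_j$, giving at most $nk$ maximal ideals above $a$.

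For $(2)\Rightarrow(1)$, let $I$ be a proper ideal of $R$; the case $I=(0)$ is trivial since $(0)$ is prime and hence $n$-absorbing. Finite character produces finitely many maximal ideals $M_1,\dots,M_r$ containing $I$. For each $i$, factor $IR_{M_i}=J_{i,1}\cdots J_{i,k_i}$ with each $J_{i,j}$ an $n$-absorbing ideal of $R_{M_i}$. Since $R_{M_i}$ is a one-dimensional quasilocal domain by (c), every nonzero proper ideal of $R_{M_i}$ has radical $M_iR_{M_i}$, and in a quasilocal ring any ideal with maximal radical is automatically primary (if $ab\in J$ and $a\notin J$ then $b$ is not a unit). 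Thus each $J_{i,j}$ is $M_iR_{M_i}$-primary. Setting $L_{i,j}=J_{i,j}\cap R$, the contraction is both $M_i$-primary and $n$-absorbing in $R$ by (a). I would then verify $I=\prod_{i,j}L_{i,j}$ locally: at $M_s$, the factors $L_{i,j}$ with $i\ne s$ are $M_i$-primary and therefore extend to all of $R_{M_s}$, leaving $\prod_j L_{s,j}R_{M_s}=\prod_j J_{s,j}=IR_{M_s}$; at a maximal ideal outside $\{M_1,\dots,M_r\}$ both sides extend to the full localization.

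The hard part will be establishing (c) in the non-Noetherian setting, since once this dimensional reduction is in hand the two directions become routine local--global manipulations. A secondary subtlety is choosing the intermediate ideals $L_{i,j}$ so that they are simultaneously $M_i$-primary (ensuring the local pictures at distinct maximals reassemble into a single global product) and $n$-absorbing (which is guaranteed by the stability of the $n$-absorbing property under contraction).
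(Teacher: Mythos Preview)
The paper does not supply its own proof for this lemma; it is stated with a parenthetical reference to \cite[Corollary 31]{c211} and \cite[Theorem 4.3]{adk} and then left as a consequence of those results. Your explicit argument is correct and is precisely the expected adaptation of those references: the only substantive external input is fact (c), that an integral domain of finite AF-dimension has Krull dimension at most one, and the paper itself later invokes this as \cite[Theorem 4.1]{adk} (in the proofs of Theorem \ref{faf1} and Corollary \ref{ccc45}), so your worry about establishing it in the non-Noetherian setting is already resolved in the literature you are citing. With (c) in hand, your finite-character count via the Anderson--Badawi bound on minimal primes, your use of the fact that nonzero proper ideals of a one-dimensional quasilocal domain are primary, and your local verification of the global product $I=\prod_{i,j}L_{i,j}$ are all sound.
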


\begin{theorem}
\label{faf1}
(cf. \cite[Theorem 5.4]{adk})
Let $R$ be a Mori domain such that $R:R^{*}$ is nonzero. Then the following are equivalent.
\begin{enumerate}
\item $R$ is an FAF-domain.
\item $R_{M}$ is an FAF-domain for each maximal ideal $M$ of $R$.
\item  $R\subseteq R^{*}$ is a unibranched extension of one-dimensional domains.
\item $R$ is locally conducive.
\end{enumerate}
\end{theorem}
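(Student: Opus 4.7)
The plan is to prove the cycle $(1) \Rightarrow (2) \Rightarrow (4) \Rightarrow (3) \Rightarrow (1)$, with the recurring structural input being that $R$ Mori and $R:R^{*}\neq(0)$ together force $R^{*}$ to be a Krull domain (Theorem \ref{bh96t}.(1)) and to behave well under localization: $(R_{M})^{*}=(R^{*})_{R\setminus M}$ by Theorem \ref{bh96t}.(3), so each localization $R_{M}$ is itself a quasilocal Mori domain (Theorem \ref{theo34}.(3)) with nonzero conductor $(R:R^{*})_{R\setminus M}$ to its complete integral closure, and $(R_{M})^{*}$ is a Krull domain.

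The implication $(1)\Rightarrow(2)$ is immediate from Lemma \ref{c31c}, since $\textnormal{AF-dim}(R_{M})\le\textnormal{AF-dim}(R)$. For $(2)\Rightarrow(4)$, I would fix a maximal $M$ and use the FAF hypothesis on the quasilocal Mori domain $R_{M}$---whose complete integral closure $(R_{M})^{*}$ is already a Krull domain with nonzero conductor---to promote $(R_{M})^{*}$ to a DVR. Once this is in hand, $(R_{M})^{*}$ is a valuation overring of $R_{M}$ with nonzero conductor, and Theorem \ref{vc1} yields conducivity of $R_{M}$.

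For $(4)\Rightarrow(3)$, I would localize at any maximal $M$: the conducive Mori domain $R_{M}$ has $(R_{M})^{*}$ completely integrally closed (as was used in the proof of Lemma \ref{mc1}), and a completely integrally closed conducive Krull domain is a DVR, mirroring the $(4)\Rightarrow(2)$ step of Theorem \ref{dd} via \cite[Corollary 2.5]{DF}. Hence $(R^{*})_{R\setminus M}$ is a DVR at every maximal $M$ of $R$, so exactly one prime of $R^{*}$ contracts to $M$, giving unibranchedness; moreover $R^{*}$ is a one-dimensional Krull domain, hence Dedekind, and by Theorem \ref{bh96t}.(1) $R$ is also one-dimensional.

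For $(3)\Rightarrow(1)$, which is the principal obstacle, I would combine unibranchedness with $R^{*}$ being Dedekind to see that $(R_{M})^{*}=(R^{*})_{R\setminus M}$ is a DVR at every maximal $M$, and then verify Corollary \ref{moria}'s criterion to conclude each $R_{M}$ is a Mori APVD; thus $R$ is a Mori LAPVD. Finite character of $R$ is automatic from one-dimensionality via Theorem \ref{theo34}.(2). The finishing step is a uniform AF-dimension bound on the quasilocal factors: invoking the companion result foreshadowed in the paragraph preceding the theorem---that $\textnormal{AF-dim}\le 3$ for Mori LAPVDs---and applying Lemma \ref{c31c} gives $\textnormal{AF-dim}(R)\le 3$, so $R$ is FAF. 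The deepest input is this uniform bound for Mori LAPVDs, without which the cycle cannot close; the secondary difficulty is the promotion of $(R_{M})^{*}$ from Krull to DVR in $(2)\Rightarrow(4)$, which is where the FAF hypothesis has to do real work.
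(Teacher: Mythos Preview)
Your cycle breaks at $(3)\Rightarrow(1)$. You claim that once $(R_{M})^{*}$ is a DVR you can ``verify Corollary~\ref{moria}'s criterion to conclude each $R_{M}$ is a Mori APVD,'' but Corollary~\ref{moria} requires $MR_{M}:MR_{M}$ to be a DVR, not $(R_{M})^{*}$. These need not coincide: a quasilocal Mori domain whose complete integral closure is a DVR is \emph{not} in general an APVD. The domains $R_{P}$ built in Corollary~\ref{c53} are exactly such counterexamples---they are quasilocal Mori with $(R_{P})^{*}=V$ a DVR and nonzero conductor, yet $\textnormal{AF-dim}(R_{P})=2n+3$, so they cannot be APVDs (which would force $\textnormal{AF-dim}\le 3$). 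Consequently your appeal to the Mori LAPVD bound is illegitimate: under hypothesis~(3) alone, $R$ is simply not an LAPVD in general, and the theorem is strictly more general than that class.

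The paper closes $(3)\Rightarrow(1)$ differently: it invokes \cite[Lemma~5.1]{adk}, which bounds $\textnormal{AF-dim}$ directly for any quasilocal domain admitting a DVR overring whose maximal ideal contracts to the maximal ideal below. This bypasses APVD structure entirely and is the correct substitute for your missing step. Separately, your $(2)\Rightarrow(4)$ sketch is thin at the point where FAF ``promotes'' $(R_{M})^{*}$ from Krull to DVR; the paper handles the analogous passage $(2)\Rightarrow(3)$ via \cite[Lemma~5.2]{adk}, which gives uniqueness of the prime of $(R_{M})^{*}$ lying over $MR_{M}$ and hence quasilocality of $(R_{M})^{*}$.
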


\begin{proof} 
We may again assume that $R$ is not a field. Note that if $R$ satisfies one of $(1),(2), (3)$ and $(4)$, then $R$ has Krull dimension 1 by \cite[Theorem 4.1]{adk}, \cite[Theorem 2.2]{bd84} and Theorem \ref{theo34}.(3), so $R^{*}$ is Dedekind and $(R^{*})_{R\setminus M}=(R_{M})^{*}$ for each maximal ideal $M$ of $R$ by Theorem \ref{bh96t}.\\
\\
(1)$\Rightarrow$(2): Follows from \cite[Proposition 3.5]{adk}.\\
\\
(2)$\Rightarrow$(3): Suppose that $R$ satisfies (2), and fix a maximal ideal $M$ of $R$. We have to show that the contraction map $f: \textnormal{Spec}(R^{*})\to \textnormal{Spec}(R)$ is bijective. Since $R$ has Krull dimension 1, the surjectivity of $f$ follows from \cite[Proposition 1.1]{bh96}. On the other hand, by Theorem \ref{theo34}.(3) 
 $R_{M}$ is a Mori FAF-domain such that $R_{M}:(R_{M})^{*}\supseteq (R:R^{*})_{ R\setminus M}\neq (0)$. Hence by \cite[Lemma 5.2]{adk}, there exists only one maximal ideal of $(R^{*})_{R\setminus M}$ that contracts to $MR_{M}$. Thus $f$ is injective.\\
\\  
(3)$\Rightarrow$(4) and $(1)$: Suppose that  $R\subseteq R^{*}$ is a unibranched extension of one-dimensional domains. Since $R^{*}$ is of finite character, so is $R$. Thus there are only finitely many maximal ideals of $R$ that contains $R:R^{*}$. Consider a maximal ideal $M$ of $R$, and let $N$ be the maximal ideal of $R^{*}$ that contracts to $M$. If $M$ does not contain $R:R^{*}$, then $(R:R^{*})R_{M}=R_{M}$, and $(R^{*})_{R\setminus M}=(R^{*})_{R\setminus M}R_{M}=(R^{*})_{R\setminus M}(R:R^{*})R_{M}\subseteq R_{M}$. Hence $R_{M}=(R^{*})_{R\setminus M}=(R^{*})_{N}$ is a DVR, which is conducive by Theorem \ref{vc1}. If $R:R^{*}\subseteq M$, then $R_{M}:(R^{*})_{N}\supseteq (R:R^{*})_{M}\neq (0)$ and $(R^{*})_{N}=(R_{M})^{*}$ is a DVR that is also an overring of $R_{M}$. Therefore $R_{M}$ is a conducive domain by Theorem \ref{vc1}, and $(4)$ follows. Moreover, $R_{M}$ is an FAF-domain since $N(R^{*})_{N}\cap R_{M}=MR_{M}$\cite[Lemma 5.1]{adk}. By Lemma \ref{c31c}, (1) follows.\\
\\
$(4)\Rightarrow (3)$: Let $R$ be a locally conducive domain and  $M$ a maximal ideal of $R$. Then $R_{M}$ is a Mori conducive domain, so $(R_{M})^{*}$ has only two overrings: $(R_{M})^{*}$ and $K$ \cite[Proposition 4.3]{DF}. Hence $(R_{M})^{*}$ is a one-dimensional valuation domain by \cite[Theorem 19.6]{G}, and $R_{M}\subseteq (R_{M})^{*}=(R^{*})_{R\setminus M}$ is a unibranched extension of one-dimensional domains. Hence there exists unique maximal ideal $N$ of $R^{*}$ that contracts to $M$, and (3) follows.
\end{proof}

\cite[Lemma 5.1]{adk} gives a useful upper bound of the AF-dimension of a quasilocal conducive domain with a discrete one-dimensional valuation overring. In the next lemma, we present a result that works for a different class of integral domains.

\begin{lemm}
\label{l58}
Let $R$ be a finite-absorbing quasilocal  one-dimensional domain with maximal ideal $M$.
\begin{enumerate}
    \item If $I$ is an ideal of $R$ and $n\in\mathbb{N}$, then $\omega_{R}(I)\le n$ if and only if $M^{n}\subseteq I$.
    \item Suppose that $M^2=aM$ for some $a\in M$, and $n=\max\{\omega_{R}(I)\mid I\textnormal{ is an ideal of $R$ such that }  I\not\subseteq M^2\}$ for some $n\in\mathbb{N}$. Then $\textnormal{AF-dim}(R)=n$.
\end{enumerate}
\end{lemm}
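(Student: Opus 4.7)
The plan is to prove (1) first and then deduce (2) using (1). For (1), the direction $M^n \subseteq I \Rightarrow \omega_R(I) \le n$ uses only quasilocality: given $a_1, \dots, a_{n+1} \in R$ with $a_1 \cdots a_{n+1} \in I$, either some $a_i$ is a unit (so dropping $a_i$ leaves an $n$-wise product of the remaining $a_j$'s still in $I$) or every $a_j \in M$, in which case any $n$-wise product lies in $M^n \subseteq I$. For the converse, the key observation is that each nonzero proper ideal $I$ of $R$ satisfies $\sqrt{I} = M$, since $M$ is the unique nonzero prime of the one-dimensional domain $R$. I would argue the contrapositive: assuming $M^n \not\subseteq I$, extract from a sum decomposition a product $P = m_1 \cdots m_n$ with $m_i \in M$ and $P \notin I$, pick the minimal $K$ with $P^K \in I$ (which exists since $P \in \sqrt{I}$), and produce an element $a_{n+1} \in (I : P) \setminus \bigcup_{j=1}^{n} (I : (P/m_j))$; the $(n+1)$-tuple $(m_1, \dots, m_n, a_{n+1})$ then witnesses failure of the $n$-absorbing property. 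The heart of the argument is producing such an $a_{n+1}$, and the finite-absorbing hypothesis on $R$ is expected to be indispensable here.

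For part (2), I first establish $\textnormal{AF-dim}(R) \le n$. Let $I$ be a nonzero proper ideal of $R$. If $I \not\subseteq M^2$, then $\omega_R(I) \le n$ by the definition of $n$, and $I$ is already $n$-absorbing. Otherwise $I \subseteq M^2 = aM \subseteq aR$, and setting $J := \{r \in R : ar \in I\}$ one checks $I = aJ$ with $J \subseteq M$. Iterating this construction yields $I = a^k J_k$, and the process must terminate in some $J_k \not\subseteq M^2$ because otherwise $I \subseteq \bigcap_k a^k R$; I would verify $\bigcap_k a^k R = (0)$ using part (1): if the intersection were nonzero (and necessarily proper), its finite $\omega_R$ would, via (1), force some $M^m$ to lie inside it, and iterating $M^2 = aM$ gives $M^m = a^{m-1} M$, so $a^{m-1} M \subseteq a^{m+1} R$ yields $M \subseteq a^2 R$; then $a = a^2 r$ for some $r \in R$, so $a(1 - ar) = 0$, whence $ar = 1$ since $R$ is a domain and $a \ne 0$, making $a$ a unit and contradicting $a \in M$. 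Since $(a) \not\subseteq aM$ (else $a = am$ with $m \in M$ would give $m = 1$, contradicting $M \ne R$), we have $\omega_R((a)) \le n$ by the definition of $n$, so $I = (a)^k J_k$ is a product of $n$-absorbing ideals.

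For the lower bound $\textnormal{AF-dim}(R) \ge n$, pick $I_0 \not\subseteq M^2$ with $\omega_R(I_0) = n$ and suppose $I_0 = J_1 \cdots J_s$ with every $J_i$ being $(n-1)$-absorbing. After discarding any $J_i$ equal to $R$, the remaining factors are proper ideals, each contained in $M$. If at least two remain, then $I_0 \subseteq J_1 J_2 \subseteq M^2$, contradicting $I_0 \not\subseteq M^2$; hence exactly one factor remains and $I_0$ itself is $(n-1)$-absorbing, contradicting $\omega_R(I_0) = n$. The main obstacle is the $(\Rightarrow)$ direction of (1), which is a special case of the Anderson--Badawi conjecture $(\sqrt{I})^{\omega_R(I)} \subseteq I$; while the finite-absorbing hypothesis and one-dimensionality should control the requisite avoidance step producing $a_{n+1}$, the precise selection may need refinement, or one might invoke an instance of this conjecture already established in \cite{c211} for finite-absorbing quasilocal one-dimensional domains.
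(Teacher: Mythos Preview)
Your proposal is essentially correct and close in spirit to the paper's proof, with a few differences worth noting.

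For part (1), your direct attempt at the implication $\omega_R(I)\le n\Rightarrow M^n\subseteq I$ is not complete as written: the avoidance step producing $a_{n+1}\in (I:P)\setminus\bigcup_j (I:P/m_j)$ is exactly where the difficulty lies, and you correctly sense this. The paper simply cites \cite[Lemma 4]{c211}, which handles precisely this case; you should do the same rather than leaving the direct argument unfinished.

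For part (2), the overall strategy (iteratively factor out $a$ until the quotient escapes $M^2$) matches the paper, but the termination arguments differ. You argue that the process stops because $\bigcap_k a^kR=(0)$, which you deduce from (1); this is correct but somewhat indirect. The paper instead observes that if $I_0=aI_1$ with $\omega_R(I_0)=m$, then $M^m=aM^{m-1}\subseteq I_0=aI_1$ forces $M^{m-1}\subseteq I_1$, so $\omega_R(I_1)\le m-1<\omega_R(I_0)$ by (1). Thus $\omega_R$ strictly decreases along the chain $I_0,I_1,\dots$, and termination is immediate since $R$ is finite-absorbing. This is cleaner and makes the role of (1) more transparent.

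Your lower-bound argument is actually more explicit than the paper's: the paper simply asserts that the existence of $I$ with $I\not\subseteq M^2$ and $\omega_R(I)=n$ gives $\textnormal{AF-dim}(R)\ge n$, whereas you spell out why (any nontrivial factorization would force $I\subseteq M^2$). This is a genuine improvement in exposition. The paper also treats the case $n=1$ separately, showing $R$ is then a DVR; your argument handles all $n$ uniformly, which is fine since the lower bound $\textnormal{AF-dim}(R)\ge 1$ is trivial.
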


\begin{proof}
(1): The statement follows from \cite[Lemma 4]{c211}. \\
\\
(2): Assume first that $n=1$. If $I$ is an ideal of $R$ such that $M^2\subseteq I\subseteq M$, then either $I=M^2$ or $I=M$. Therefore $M^2=aM\subsetneq aR\subseteq M$ implies that $aR=M$. In other words, $R$ is Noetherian by Cohen's theorem, and must be a Dedekind domain \cite[Theorem 38.1]{G}. Thus $\textnormal{AF-dim}(R)=1$, and we are done. 

Suppose that $n\ge 2$. Since there exists an ideal $I$ of $R$ such that $I\not\subseteq M^2$ and $\omega_{R}(I)=n$ by our assumption, it follows that $\textnormal{AF-dim}(R)\ge n$. Now choose an ideal $I_{0}$ of $R$ contained in $M^2$. Since $M^2=aM$, we have $I_{0}\subseteq aM$ and $I_{0}=aI_{1}$ for some proper ideal $I_{1}$ of $R$. We also have $\omega_{R}(I_{0})>\omega_{R}(I_{1})$. Indeed, if $\omega_{R}(I_{0})=m$ for some $m\in\mathbb{N}$ (such $m$ exists since $R$ is finite-absorbing), then $M^{m}=aM^{m-1}$, so $M^{m-1}\subseteq I_{1}$ and $\omega_{R}(I_{1})\le m-1$ by (1). Now, either $I_{1}\not\subseteq M^2$ and $I_{1}$ is an $n$-absorbing ideal of $R$, or $I_{1}=aI_{2}$ for some proper ideal $I_{2}$ of $R$ with $\omega_{R}(I_{1})>\omega_{R}(I_{2})$.
Iterating this process, we deduce that $I_{0}=a^{r}I_{r}$ for some $r\in\mathbb{N}$ and an ideal $I_{r}$ of $R$ that is $n$-absorbing.
Since $M^2=aM\subseteq aR$, we have $\omega_{R}(aR)\le 2\le n$ by (1). Therefore
$I_{0}$ is a finite product of $n$-absorbing ideals of $R$, and $\textnormal{AF-dim}(R)\le n$. 
\end{proof}

\begin{theorem}
\label{af3}
\label{c52}
\begin{enumerate}
\item Let $R$ be a one-dimensional APVD such that $M$ is a principal ideal of $V$. Then $\textnormal{AF-dim}(R)\le 3$. 
\item Every Mori LAPVD has AF-dimension at most 3.
\item Let $R$ be a Mori domain. Then we have the following.
       \[
    \textnormal{AF-dim}(R)= 
\begin{cases}
    1& \text{if and only if } R \text{ is a Pr{\"u}fer domain},\\
    2             & \text{if and only if } R \text{ is an LPVD that is not a Pr{\"u}fer domain},\\
     3             & \text{if } R \text{ is an LAPVD that is not an LPVD}.
\end{cases}
\]
\end{enumerate}
\end{theorem}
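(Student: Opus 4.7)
The plan is to establish part (1) by a direct analysis of the ideals of $R$ using the principal generator $a$ of $M = aV$, to derive part (2) from part (1) via localization and Lemma \ref{c31c}, and to obtain part (3) by combining (2) with Proposition \ref{tafmori}, which characterizes Mori LPVDs as the Mori domains of $\textnormal{AF-dim}$ at most $2$.

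For part (1), I first record the identities $M^2 = (aV)(aV) = a^2V = aM$ and, using $M = aV \subseteq R$, $M^2 = aM \subseteq aR$. The latter inclusion already makes $aR$ a $2$-absorbing ideal: in any product $x_{1}x_{2}x_{3} \in aR$, either some $x_{i}$ is a unit (and can be factored out to obtain a pair in $aR$) or all three lie in $M$ (so any pair lies in $M^{2} \subseteq aR$). The main technical step is the following: every proper ideal $J$ of $R$ with $J \not\subseteq M^2$ already satisfies $M^3 \subseteq J$, hence is $3$-absorbing. Given $r \in J \setminus M^2$, write $r = au$ with $u \in V$; the condition $r \notin aM = a^2V$ is equivalent to $u \notin aV$, i.e., $v(u) < v(a)$ for the valuation $v$ on $V$. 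For any $w \in V$ we then have $v(a^2 w u^{-1}) = 2v(a) + v(w) - v(u) > v(a)$, so $a^2 w u^{-1} \in aV = M \subseteq R$ and hence $a^3 w = r \cdot (a^2 w u^{-1}) \in rR \subseteq J$. Letting $w$ range over $V$ gives $M^3 = a^3 V \subseteq J$.

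To handle ideals contained in $M^2$, I will descend. If $0 \neq I \subseteq M^2 = aM$, then each element of $I$ has the form $am$ with $m \in M$, so $I_1 := (I : a)$ is a proper $R$-ideal with $I = aI_1$; properness follows from $a \notin M^2 \supseteq I$ (since $M = aV$ forces $a \notin a^2V$). Iterating produces $I = a^k I_k$ and either terminates with $I_k \not\subseteq M^2$ or forces $I \subseteq \bigcap_k a^k V = (0)$ via \cite[Theorem 17.1.(3)]{G}, a contradiction. In the terminating case $I = (aR)^k I_k$ is a finite product of at most $3$-absorbing ideals, so $\textnormal{AF-dim}(R) \le 3$. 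For part (2), each localization of a Mori LAPVD at a maximal ideal is a Mori APVD (Theorem \ref{theo34}.(3)) whose associated valuation overring is a DVR by Corollary \ref{moria}, so $\textnormal{dim}(R_M) = 1$ by Lemma \ref{l13}; thus $R$ is one-dimensional and of finite character by Theorem \ref{theo34}.(2), and Lemma \ref{c31c} combined with part (1) yields the bound. For part (3), the Pr{\"u}fer case is the classical Dedekind characterization; the LPVD case is Proposition \ref{tafmori} combined with the Pr{\"u}fer case to force strict equality; the LAPVD case combines part (2) with the observation that $\textnormal{AF-dim}(R) \le 2$ would force $R$ to be an LPVD, again by Proposition \ref{tafmori}.

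The main obstacle will be the valuation-theoretic computation $M^3 \subseteq J$ in part (1). Because $M$ (but not necessarily $N$) is principal in $V$, the ambient valuation domain $V$ need not be a DVR, so one cannot directly invoke the bound in \cite[Lemma 5.1]{adk} that was applied in Theorem \ref{faf1}; the principality of $M$ must be exploited via the inequality $v(u) < v(a)$ extracted from $r = au \notin aM$.
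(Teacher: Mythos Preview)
Your proof is correct and follows essentially the same route as the paper's: both arguments show that every proper ideal $J\not\subseteq M^2$ satisfies $M^3\subseteq J$ (you via an element-wise valuation computation with $r=au$, the paper via the one-line ideal-theoretic observation that $JV\supsetneq M^2$ in the valuation domain $V$ forces $M^3\subseteq JVM=JM\subseteq J$), and then factor arbitrary nonzero ideals by peeling off copies of $aR$ until the quotient escapes $M^2$. The only difference is packaging: the paper isolates the descent step as Lemma~\ref{l58} (using finite-absorbing to guarantee termination) and invokes it, whereas you carry it out inline and terminate via $\bigcap_k a^kV=(0)$; parts (2) and (3) are handled identically.
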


\begin{proof}
(1): Let $M=aV$ for some $a\in V$. Note that $V$ is one-dimensional by Lemma \ref{l13} and $a\in M$. We also have $\bigcap\limits_{i=1}^{\infty}M^{i}=(0)$ \cite[Theorem 17.1]{G}. Hence given a proper ideal $I$ of $R$, either $IV=(0)$ or $M^{i}\subseteq IV$ for some $i$. The former yields that $I=(0)$, which is a prime ideal. The latter gives $M^{i+1}\subseteq IVM=IM\subseteq I$. Note that for each nonzero proper ideal $J$ of $R$, $\omega(J)\le n$ if and only if $M^{n}\subseteq J$ \cite[Lemma 4]{c211}. Hence we must have $\omega_{R}(I)\le i+1$, and $R$ is finite-absorbing. If $I\not\subseteq M^2$, then $IV\not\subseteq M^2$ and $M^2\subsetneq IV$, so $M^3\subseteq IM\subseteq I$ and $\omega_{R}(I)\le 3$. Therefore $\textnormal{AF-dim}(R)\le 3$ by Lemma \ref{l58}. \\
\\
(2): Let $R$ be a Mori LAPVD. We may assume that $R$ is not a field. Since every Mori APVD has Krull dimension at most 1 by Corollary \ref{moria}, $R$ is one-dimensional. Then by Theorem \ref{theo34}.(4), $R$ is of finite character. Hence by Lemma \ref{c31c}, we may assume that $R$ is a Mori APVD. Then $V=M:M$ is a DVR by Corollary \ref{moria}, so $R$ is one-dimensional and $M$ is a principal ideal of $V$, and $\textnormal{AF-dim}(R)\le3$ by (1). 
\\
\\
(3): We may assume that $R$ is not a field. Since $R$ is a Mori domain, $R$ is Pr{\"u}fer if and only if $R$ is Dedekind \cite[Corollary 2.6.21]{e19}, from which the first case follows. The second and third assertions then follow from (2) and Proposition \ref{tafmori}.
\end{proof}

Note that given $n\in\mathbb{N}$, one can construct a commutative ring with zero divisors whose AF-dimension equals $n$ \cite[Proposition 3.8]{adk}. However, every FAF-domain we mentioned so far, including the examples in \cite{adk}, is an LAPVD that has AF-dimension at most 3. Hence one may ask whether we can construct an integral domain that has AF-dimension $n$, where $n\ge 4$ is a preassigned natural number. In the next corollary we prove that such construction is possible, and the integral domain can be chosen to be non-Noetherian.

\begin{coro}
\label{c53}
Choose a field extension $F\subseteq L$, an indeterminate $X$ and $n\in\mathbb{N}$. Let 
\begin{align*}
    R&=F+X^2 L+X^4 L+\cdots+X^{2n}L+X^{2n+2}L[X],\\
S&=F+X^3 L+X^6 L+\cdots+X^{3n-3}L+X^{3n}L+X^{3n+2}L[X].
\end{align*} Then
\begin{enumerate}
    \item $R$ and $S$ are Mori FAF-domains
    .
    \item $\textnormal{AF-dim}(R)=2n+3$ and $\textnormal{AF-dim}(S)=2n+2$.
\end{enumerate}
\end{coro}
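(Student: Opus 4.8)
The plan is to realise both rings as pullbacks along $\pi\colon L[X]\to L[X]/\mathfrak c$, pin down their complete integral closures, verify the Mori and finite-absorbing hypotheses needed for the localization machinery of Lemmas \ref{c31c} and \ref{l58}, and finally read off the AF-dimension from the value semigroup of one distinguished local ring. First I would record the global structure. Both $R$ and $S$ are subrings of $L[X]$, and $R^{*}=S^{*}=L[X]$: every $\ell\in L$ and the element $X$ are almost integral over $R$ (multiply by the conductor element $X^{2n+2}$, resp.\ $X^{3n+2}$), while $L[X]$ is a PID, hence completely integrally closed and Dedekind. Thus $R:R^{*}=X^{2n+2}L[X]\neq(0)$ and $S:S^{*}=X^{3n+2}L[X]\neq(0)$. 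Since $XL[X]$ is the only maximal ideal of $R^{*}$ containing the conductor, there is a unique maximal ideal $M_{0}=\{f\in R:f(0)=0\}$ of $R$ lying over it, and every other localization of $R$ agrees with a DVR localization of $L[X]$; in particular $R$ has finite character and $R\subseteq R^{*}$ is a unibranched extension of one-dimensional domains (and likewise for $S$).

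Next I would establish that $R$ and $S$ are Mori. By Theorem \ref{theo34}.(4), given finite character it suffices to check that every $R_{M}$ is Mori; away from $M_{0}$ these are DVRs, so the content is that $D:=R_{M_{0}}$ is Mori. Here $D=\pi^{-1}(A)$ with $\pi\colon V\to V/\mathfrak c$, where $V=L[X]_{(X)}$ is a DVR, $\mathfrak c=X^{2n+2}V$ is a divisorial ideal of $V$ with $V/\mathfrak c$ Artinian, and $A=F+LX^{2}+\cdots+LX^{2n}$. I would deduce Mori-ness of $D$ from the pullback results used for $\mathbb{Q}+(X^{2}-1)\mathbb{R}[X]$ above (\cite[Proposition 4.1 and Theorem 4.3]{bh96}), now applied to this conductor pullback over the Dedekind domain $L[X]$, or else argue directly that $D$ satisfies ACC on divisorial ideals. \emph{I expect this to be the main obstacle.} Because $D$ is \emph{not} an APVD — its value semigroup $\Gamma=\{0,2,\dots,2n\}\cup\{m\ge 2n+2\}$ has gaps, so $\mathfrak m V\neq\mathfrak m$ — the clean dichotomy underlying the proof of Corollary \ref{moria} fails; indeed the maximal ideal $\mathfrak m$ is a divisorial ideal that is neither principal nor a fractional ideal of $V$. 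One must instead show that the $v$-operation collapses the (when $[L:F]=\infty$, infinitely many) ideals differing only in partial residue $F$-subspaces onto finitely describable divisorial ideals, so that ACC on divisorial ideals survives even though $D$ may be non-Noetherian.

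With Mori-ness in hand, part (1) follows: since $R$ is Mori with nonzero conductor and $R\subseteq R^{*}$ unibranched of one-dimensional domains, Theorem \ref{faf1} gives that $R$ (and $S$) is an FAF-domain. For part (2), Lemma \ref{c31c} and finite character reduce the computation to the single local ring, $\textnormal{AF-dim}(R)=\max_{M}\textnormal{AF-dim}(R_{M})=\textnormal{AF-dim}(D)$, the other localizations being DVRs of AF-dimension $1$. The ring $D$ is finite-absorbing (every nonzero ideal contains a power of $\mathfrak m$, via the conductor, so \cite[Lemma 4]{c211} applies), quasilocal and one-dimensional, and a short computation in $\Gamma$ shows $\mathfrak m^{2}=X^{2}\mathfrak m$ (for $S$, $\mathfrak m^{2}=X^{3}\mathfrak m$). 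Lemma \ref{l58}.(2) then reduces everything to computing $\max\{\omega_{D}(I):I\not\subseteq\mathfrak m^{2}\}$.

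Finally I would carry out this semigroup computation. By Lemma \ref{l58}.(1), $\omega_{D}(I)=\min\{k:\mathfrak m^{k}\subseteq I\}$, and $\mathfrak m^{k}$ has least value $2k$ (resp.\ $3k$). The ideals $I\not\subseteq\mathfrak m^{2}$ are precisely those containing an element whose value lies in $\Gamma\setminus v(\mathfrak m^{2})$, and the extremal one is generated by an element of the largest such value, namely $2n+3$ for $R$ and $3n+4$ for $S$. For $I=(X^{2n+3})$ one checks that $\mathfrak m^{k}\subseteq I$ forces $2k\ge 4n+6$, i.e.\ $k\ge 2n+3$, with equality attained, so $\textnormal{AF-dim}(R)=2n+3$; for $I=(X^{3n+4})$ one gets $3k\ge 6n+6$, i.e.\ $k\ge 2n+2$, so $\textnormal{AF-dim}(S)=2n+2$. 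The remaining care is twofold: confirming that these are genuine maxima (ideals meeting the smaller exceptional values $2$, resp.\ $3$ and $3n+2$, have strictly smaller $\omega$), and confirming that the value-level containments $\mathfrak m^{k}\subseteq(X^{\lambda})$ are true ideal containments despite the residue restriction to $F$ at value $0$ — which they are, since all the relevant values exceed the conductor of $\Gamma$, where the coefficients range freely over $L$ and the $F$-versus-$L$ subtlety does not intervene.
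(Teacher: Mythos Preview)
Your overall strategy matches the paper's: realise $R$ as a conductor pullback of $L[X]$, reduce via Lemma~\ref{c31c} to the single local ring $D=R_{M_{0}}$, verify $\mathfrak m^{2}=X^{2}\mathfrak m$ (resp.\ $X^{3}\mathfrak m$ for $S$), and invoke Lemma~\ref{l58} together with a value-semigroup computation. Your argument for part~(2) is correct and essentially the paper's, including the extremal ideal $(X^{2n+3})$ for $R$; for $S$ the paper does not spell things out, and your choice $(X^{3n+4})$ with $\omega=2n+2$ is right.

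The genuine gap is exactly where you flag it: the Mori-ness of $D$. Your proposed route via \cite[Theorem~4.3]{bh96} does not go through in the interesting case $[L:F]=\infty$. Those pullback-Mori results require control on the bottom ring $A=R/(R:R^{*})=F+LX^{2}+\cdots+LX^{2n}$, and when $[L:F]=\infty$ the $A/\mathfrak n$-module $\mathfrak n/\mathfrak n^{2}\cong L$ is not finitely generated, so $A$ is neither Noetherian nor Artinian; the hypothesis that made the $\mathbb{Q}+(X^{2}-1)\mathbb{R}[X]$ example work (the bottom ring is a field) fails here. Your fallback ``argue directly that $D$ satisfies ACC on divisorial ideals'' is a restatement of the goal, not a method.

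The paper supplies the missing idea: pass not down to $A$ but \emph{up} to the overring $T=L+M$ of $D$. Since $V=T+XT$ is a finite $T$-module and $V$ is a DVR, Eakin--Nagata forces $T$ to be Noetherian regardless of $[L:F]$. One checks $D:T=X^{2}D$, whence $D:HT=(D:T):H=X^{2}(D:H)$ for every ideal $H$ of $D$. Now given a nonzero ideal $I$ of $D$, Noetherianity of $T$ gives a finitely generated $J\subseteq I$ with $IT=JT$, so $D:I=X^{-2}(D:IT)=X^{-2}(D:JT)=D:J$, and Theorem~\ref{theo34}.(1) yields that $D$ is Mori. This trick bypasses exactly the obstruction you identified: the infinitely many $F$-subspaces of $L$ that make $A$ and $D$ non-Noetherian are absorbed once one enlarges the constants from $F$ to $L$.
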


\begin{proof}
We will prove the Corollary only for $R$, since the proof can be easily adapted for the case of $S$.

Let $I=X^{2n+2}L[X]$ and $P=X^2 L+X^4 L+\cdots+X^{2n}L+X^{2n+2}L[X]$. Then $R$ is a pullback domain $R/I\times_{L[X]/I} L[X]$, and  $\textnormal{Spec}(R)=\{P, 0\}\cup\{Q\cap R\mid Q\in \textnormal{Spec}(L[X]), X\not\in Q\}$ \cite[Lemma 1.1.4.(3)]{fhp}. It follows that $R\subseteq L[X]$ is a unibranched extension, so $R$ is a one-dimensional domain of finite character.
Moreover, for each $Q\in \textnormal{Spec}(L[X])$ such that $X\not\in Q$, $R_{Q\cap R}\cong L[X]_{Q}$ is a DVR \cite[Lemma 1.1.4.(3)]{fhp} and has AF-dimension 1. Thus $ \textnormal{AF-dim}(R)= \textnormal{AF-dim}(R_{P})$ by \cite[Corollary 31]{c211}. 
Let  $V=L[X]_{XL[X]}$ and $M=PR_{P}=X^2 L+X^4 L+\cdots+X^{2n}L+X^{2n+2}V$. Then $M$ is the maximal ideal of $R_{P}$, and $M^2=X^{2}M$. It also follows that $R_{P}$ is a conducive domain by Theorem \ref{vc1}, since $X^{2n+2}\in R_{P}:V$. So $R$ is locally conducive.\\
\\
(1): Note that $X^{2}\in R:L[X]=R:R^{*}$, so by Theorem \ref{faf1} it remains to show that $R$ is Mori. In fact, by (1) and (4) of Theorem \ref{theo34}, we only need to prove that for each nonzero ideal $I$ of $R_{P}$, there exists a finitely generated ideal $J$ of $R_{P}$ such that $R_{P}:I=R_{P}:J$ and $J\subseteq I$. 
Let $I$ be a nonzero ideal of $R_{P}$, and let $T=L+M$. Then $T$ is a quasilocal overring of $R_{P}$ with maximal ideal $M$. Moreover, $T$ is a Noetherian ring by Eakin-Nagata theorem, so $IT=JT$ for some finitely generated ideal $J\subseteq I$. Since $R_{P}:T=X^{2}R_{P}$, we have $R_{P}:HT=(R_{P}:T):H=X^{2}(R_{P}:H)$ for each ideal $H$ of $R_{P}$. Therefore $R_{P}:I=X^{-2}(R_{P}:IT)=X^{-2}(R_{P}:JT)=R_{P}:J$.
\\
\\
(2): Let $\mathcal{S}=\{I\textnormal{ is an ideal of }R_{P}\mid   I\not\subseteq M^2\}$ and choose $I\in\mathcal{S}$. Then we have $IV=X^{r}V$ for some $r\le 2n+3$. Hence $4n+4+i-r\ge 2n+2$ for each $i\in\mathbb{N}$, and $X^{4n+4+i}L\subseteq fR$ for every $f\in V$ with $fV=X^{r}V$. So  $M^{2n+3}=X^{4n+4}M=X^{4n+6} L+X^{4n+8} L+\cdots+X^{6n+4}L+X^{6n+6}V\subseteq I$, and $\omega_{R_{P}}(I)\le 2n+3$.  On the other hand, let $I=X^{2n+3}R$. Then $I\in \mathcal{S}$, and $M^{2n+2}\not\subseteq I$ since $X^{4n+4}\in M^{2n+2}\setminus I$. Thus $\omega_{R_{P}}(I)=2n+3$, and $2n+3=\max\{\omega_{R_{P}}(I)\mid I\in\mathcal{S}\}$. Moreover, $R_{P}$ is  finite-absorbing by \cite[Theorem 29.(2)]{c211}, \cite[Proposition 3.5]{adk} and (1). Therefore $\textnormal{AF-dim}(R_{P})=2n+3$ by Lemma \ref{l58}.
\end{proof}

\begin{rem}
\begin{enumerate}
    \item The ring $R$ in Corollary \ref{c53} is Noetherian exactly when $[L:F]<\infty$ \cite[Theorem 4]{br76}. 
    \item Unlike TAF-domains, FAF-domains are not necessarily Mori. Let $V$ be a valuation domain of value group $\mathbb{Q}$ that contains a field of characteristic zero \cite[Proposition 18.4, Corollary 18.5]{G}. Then for a nonzero nonunit $a\in V$, set $M=aV$. It follows that $V/M$ has characteristic zero, so it contains a field $L$ (of characteristic zero). Now let $R=L\times_{V/M}V$ 
which is an APVD with maximal ideal $M$ by Proposition \ref{pullback1}. Since $\textnormal{AF-dim}(R)\le3$ by Theorem \ref{af3}.(1), $R$ is an FAF-domain. But the value group of $V$ is a nondiscrete subgroup of $\mathbb{R}$, so $V=M:M$ is not a DVR and $R$ is not a Mori domain by Corollary \ref{moria}, and $\textnormal{AF-dim}(R)=3$ by Proposition \ref{tafmori}.
\end{enumerate}
\end{rem}

We now focus on the pullback properties of FAF-domains.

\begin{prop}
\label{p56}
Let $R$ be a Mori FAF-domain with $R:R^{*}\neq (0)$.
Then there exists a Dedekind domain $T$, maximal ideals $N_{1},\dots, N_{n}$ of $T$, $a_{1},\dots, a_{n}\in\mathbb{N}$ and unibranched ring extensions $D_{i}\subseteq T/N_{i}^{a_{i}}$ such that $R=\pi^{-1}(\prod\limits_{i=1}^{n} D_{i})$ where $\pi:T\to \prod\limits_{i=1}^{n} (T/N_{i}^{a_{i}})$ is the canonical projection.
\end{prop}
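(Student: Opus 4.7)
The plan is to set $T := R^*$, use Theorem~\ref{faf1} to get $T$ Dedekind with $R\subseteq T$ unibranched of Krull dimension one, let $I := R:R^*$, factor $I$ in $T$, and then apply Chinese remainder theorems in both $T$ and $R$ to decompose $R/I$ inside $T/I$ as a product.

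Concretely, I would first invoke Theorem~\ref{faf1} to conclude that $T=R^*$ is Dedekind, $R\subseteq T$ is unibranched, and both rings are one-dimensional. Factoring the nonzero conductor $I=R:R^*$ in the Dedekind domain $T$ as $I=N_1^{a_1}\cdots N_n^{a_n}$ with $N_1,\dots,N_n$ distinct maximal ideals of $T$, CRT in $T$ gives $T/I\cong\prod_{i=1}^n T/N_i^{a_i}$ and identifies $\pi$ with the canonical quotient modulo $I$. Set $M_i:=N_i\cap R$ and $J_i:=N_i^{a_i}\cap R$; the $M_i$ are distinct (by unibranchedness) maximal ideals of $R$, each containing the nonzero ideal $I$.

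Next I would run CRT in $R$ using the ideals $J_i$. The identity $\sqrt{J_i}=\sqrt{N_i^{a_i}}\cap R=N_i\cap R=M_i$ shows $\sqrt{J_i+J_j}\supseteq M_i+M_j=R$ for $i\neq j$, so $J_i+J_j=R$; and $\bigcap_i J_i=(\bigcap_i N_i^{a_i})\cap R=I$. Thus CRT yields $R/I\cong\prod_{i=1}^n R/J_i$. Setting $D_i:=R/J_i$, the natural map $D_i\hookrightarrow T/N_i^{a_i}$ is injective by construction of $J_i$. Both rings have spectrum consisting of a single prime (the unique prime of $T/N_i^{a_i}$ is $N_i/N_i^{a_i}$, while every prime of $D_i$ must contain $\sqrt{J_i}/J_i=M_i/J_i$), so the extension $D_i\subseteq T/N_i^{a_i}$ is trivially unibranched.

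Finally, the pullback identity $R=\pi^{-1}(\prod D_i)$ would follow by combining the two CRT isomorphisms: given $t\in T$ with $\pi(t)\in\prod D_i$, for each $i$ there is an $r_i\in R$ with $t\equiv r_i\pmod{N_i^{a_i}}$, and by CRT in $R$ one finds $r\in R$ with $r\equiv r_i\pmod{J_i}$ for every $i$; then $t-r\in\bigcap_i N_i^{a_i}=I\subseteq R$, forcing $t\in R$. The step I expect to demand the most care is the comaximality $J_i+J_j=R$, for which the unibranched hypothesis is essential: without it, $\sqrt{J_i}$ could contain several distinct maximal ideals of $R$ and the CRT decomposition would break, as the pullback $\mathbb{Q}+(X^2-1)\mathbb{R}[X]$ from the remark after Theorem~\ref{tafpullback} illustrates.
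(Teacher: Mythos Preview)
Your proposal is correct and follows essentially the same route as the paper: set $T=R^{*}$, factor the conductor $I=R:R^{*}$ in the Dedekind domain $T$, put $D_{i}=R/(N_{i}^{a_{i}}\cap R)$, and identify $R$ with the pullback $\pi^{-1}(\prod D_{i})$. The paper compresses your two Chinese remainder arguments into a single citation of \cite[Lemma~1.1.6]{fhp} and invokes Theorem~\ref{bh96t}(1) rather than Theorem~\ref{faf1} for Dedekindness of $R^{*}$, but the substance is identical; your explicit verification of the comaximality $J_{i}+J_{j}=R$ via unibranchedness (which the paper leaves implicit) is a welcome clarification.
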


\begin{proof}
 $R^{*}$ is a Dedekind domain by Theorem \ref{bh96t}.(1). Therefore, letting $T=R^{*}$ and $I=R:R^{*}$, it follows that $R=R/I\times_{T/I}T$.  Moreover, there exist only finitely many maximal ideals $N_{1},\dots, N_{n}$ of $T$ that contains $I$, and $I=\prod\limits_{i=1}^{n}N_{i}^{a_{i}}$ for some $a_{i}\in\mathbb{N}$. Fix $i\in\{1,\dots, n\}$ and let $M_{i}=N_{i}\cap R$, $D_{i}=R/(N_{i}^{a_{i}}\cap R)$. Then  $D_{i}\subseteq T/N_{i}^{a_{i}}$ is a unibranched ring extension, and $R=\pi^{-1}(\prod\limits_{i=1}^{n} D_{i})$\cite[Lemma 1.1.6]{fhp}.
\end{proof}

\begin{prop}
\label{fafp}
Let $R$ be an integral domain that is not a field. Then the following are equivalent.
\begin{enumerate}
    \item $R$ is a Noetherian FAF-domain with $R:R'\neq (0)$.
    \item There exists a Dedekind domain $T$, maximal ideals $N_{1},\dots, N_{n}$ of $T$, $a_{1},\dots, a_{n}\in\mathbb{N}$ and unibranched ring extensions $D_{i}\subseteq T/N_{i}^{a_{i}}$ such that 
     $T/N_{i}^{a_{i}}$ is a finite $D_{i}$-module for each $i$, and $R=\pi^{-1}(\prod\limits_{i=1}^{n} D_{i})$ where $\pi:T\to \prod\limits_{i=1}^{n} (T/N_{i}^{a_{i}})$ is the canonical projection.
     \item $R'$ is a Dedekind domain, and $R/(R:R')\to R'/(R:R')$ is a unibranched ring extension that is also a finite module extension. 
\end{enumerate}
In particular, if $(1)$ holds, then $R'$ is a finite $R$-module
and $\textnormal{AF-dim}(R)=\max\limits_{1\le i\le n}\{\textnormal{AF-dim}(R_{N_{i}\cap R})\}$.
\end{prop}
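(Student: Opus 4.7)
The plan is to prove $(1) \Rightarrow (3) \Rightarrow (2) \Rightarrow (1)$, since the most efficient route between the three conditions passes through the cleaner intrinsic description in $(3)$. The linchpin of the whole argument is a preliminary observation that I would record first: \emph{if $R$ is a Noetherian domain with nonzero conductor $\mathfrak{f}=R:R'$, then $R^{*}=R'$ and $R'$ is a finite $R$-module}. Both facts follow from picking $0\neq c\in\mathfrak{f}$ and observing $cR^{*}\subseteq R$, so that $R^{*}\subseteq c^{-1}R\cong R$ as $R$-modules; since $R$ is Noetherian, the submodule $R^{*}$ is finitely generated, hence integral, hence equal to $R'$. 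Once this is in hand, the ``in particular'' statement of the proposition is essentially free in the direction where $(1)$ holds.

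For $(1)\Rightarrow(3)$: Noetherianness gives $R'=R^{*}$, so by Theorem~\ref{faf1} and Theorem~\ref{bh96t}.(1), $R'$ is a Dedekind domain and $R\subseteq R'$ is unibranched of dimension one; passing to the quotient by $R:R'$ preserves unibranchedness, and finiteness of $R'$ over $R$ gives finiteness of the module extension $R/(R:R')\hookrightarrow R'/(R:R')$. For $(3)\Rightarrow(2)$: set $T=R'$ and $\mathfrak{f}=R:R'$; since $T$ is Dedekind and $\mathfrak{f}\neq 0$ (forced by the finiteness of the quotient extension, as otherwise $R$ would be a field), factor $\mathfrak{f}=\prod_{i=1}^{n}N_{i}^{a_{i}}$ with the $N_{i}$ distinct maximal. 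The Chinese Remainder Theorem produces $T/\mathfrak{f}\cong\prod_{i=1}^{n}T/N_{i}^{a_{i}}$; because each factor $T/N_{i}^{a_{i}}$ is local and the extension $R/\mathfrak{f}\subseteq T/\mathfrak{f}$ is unibranched, the idempotents of $T/\mathfrak{f}$ lift into $R/\mathfrak{f}$, so $R/\mathfrak{f}$ splits accordingly as $\prod D_{i}$, with $D_{i}\subseteq T/N_{i}^{a_{i}}$ a unibranched finite module extension. The pullback description $R=\pi^{-1}(\prod D_{i})$ is then \cite[Lemma 1.1.6]{fhp}.

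For $(2)\Rightarrow(1)$: set $\mathfrak{f}=\prod N_{i}^{a_{i}}\subseteq R$. From finiteness of each $T/N_{i}^{a_{i}}$ over $D_{i}$, the quotient $T/\mathfrak{f}$ is finite over $R/\mathfrak{f}$; lifting generators and using $\mathfrak{f}\subseteq R$, we deduce that $T$ itself is a finite $R$-module. Eakin--Nagata then forces $R$ to be Noetherian, $R:R'\supseteq R:T=\mathfrak{f}\neq 0$, and Proposition~\ref{p56}, read in reverse via Theorem~\ref{faf1}, certifies that $R$ is an FAF-domain (the unibranched hypothesis on each $D_{i}\subseteq T/N_{i}^{a_{i}}$ globalizes to unibranchedness of $R\subseteq T=R'$, while localizations at maximal ideals not containing $\mathfrak{f}$ are DVRs). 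The final AF-dimension formula then drops out of Lemma~\ref{c31c}, since $R$ has finite character (Theorem~\ref{theo34}.(2)) and $R_{M}$ is a DVR of AF-dimension $1$ whenever $M$ does not contain $\mathfrak{f}$, leaving only the maximal ideals $N_{i}\cap R$ to contribute to the maximum.

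The main technical friction I expect is in $(3)\Rightarrow(2)$: carefully justifying that unibranchedness of $R/(R:R')\hookrightarrow R'/(R:R')$ together with the product decomposition of the target forces the source to split as a product of the $D_{i}$'s (equivalently, that the $N_{i}^{a_{i}}\cap R$ are pairwise comaximal in $R$). The Artinian structure of both sides, combined with the bijection on $\Spec$, makes this true, but it is the step most likely to hide a subtlety if one does not explicitly invoke idempotent lifting; everything else is routine manipulation of pullbacks and the conductor.
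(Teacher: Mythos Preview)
Your approach is correct and takes a genuinely different route from the paper. The paper runs the cycle $(1)\Rightarrow(2)\Rightarrow(3)\Rightarrow(1)$: for $(1)\Rightarrow(2)$ it invokes Proposition~\ref{p56} to obtain the pullback decomposition and then appeals to \cite[Theorem~6]{bdf86} (a characterization of Noetherian conducive domains) to extract finiteness of each $T/N_i^{a_i}$ over $D_i$; for $(3)\Rightarrow(1)$ it cites \cite[Proposition~1.1.7]{fhp} for Noetherianness of the pullback. You instead run $(1)\Rightarrow(3)\Rightarrow(2)\Rightarrow(1)$, get finiteness of $R'$ over $R$ directly from the conductor, and use Eakin--Nagata for Noetherianness. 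Your route is more self-contained---it avoids the external input from \cite{bdf86}---at the price of having to do the Artinian/idempotent-lifting argument in $(3)\Rightarrow(2)$, which the paper sidesteps since for it $(2)\Rightarrow(3)$ is the trivial direction. Both approaches ultimately rest on Theorem~\ref{faf1} for the FAF conclusion.

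Two small slips to clean up. In your preliminary observation, ``$cR^{*}\subseteq R$'' does not follow from $c\in R:R'$; that assumes what you want. The standard argument needs no conductor: for $x\in R^{*}$ there is $0\neq d\in R$ with $R[x]\subseteq d^{-1}R$, and Noetherianness makes $R[x]$ finite, so $x\in R'$. Then the conductor gives $R'\subseteq c^{-1}R$, hence $R'$ is a finite $R$-module. Second, in $(3)\Rightarrow(2)$ the reason $\mathfrak{f}\neq 0$ is not that ``$R$ would be a field'' but simply that a finitely generated $R$-submodule of $K$ has nonzero conductor; if $\mathfrak{f}=0$ then $(3)$ says $R'$ is finite over $R$, forcing $R:R'\neq 0$ after all. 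Your flagged step---that unibranchedness forces the primitive idempotents of $T/\mathfrak{f}$ to lie in $R/\mathfrak{f}$---is fine once you note (via Eakin--Nagata and the integral extension) that $R/\mathfrak{f}$ is itself Artinian, hence a product of local rings, and the Spec bijection then matches the two idempotent decompositions.
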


\begin{proof}
(1)$\Rightarrow$(2): Suppose that $R$ is a Noetherian FAF-domain with $R:R'\neq (0)$. By Proposition \ref{p56} there exists a Dedekind domain $T$, maximal ideals $N_{1},\dots, N_{n}$ of $T$, $a_{1},\dots, a_{n}\in\mathbb{N}$ and  $D_{i}\subseteq T/N_{i}^{a_{i}}$ is a unibranched ring extension for each $i\in\{1,\dots, n\}$ such that $R=\pi^{-1}(\prod\limits_{i=1}^{n} D_{i})$ where $\pi:T\to \prod\limits_{i=1}^{n} (T/N_{i}^{a_{i}})$ is the canonical projection. Therefore, $R_{M_{i}}=D_{i}\times_{T/N_{i}^{a_{i}}}T_{N_{i}}$, where $M_{i}=N_{i}\cap R$ for each $i\in\{1,\dots, n\}$ \cite[Lemma 1.1.6]{fhp}. Moreover, $R_{M_{i}}$ is a Noetherian conducive domain by Theorem \ref{faf1}. Hence $T/N_{i}^{a_{i}}$ is a finite $D_{i}$-module \cite[Theorem 6]{bdf86}. 
\\
\\
(2)$\Rightarrow$(3): Suppose that $(2)$ holds. For each $i\in\{1,\dots, n\}$, since $D_{i}\to T/N_{i}^{a_{i}}$ is unibranched and finite, so is $\prod\limits_{i=1}^{n}D_{i}\subseteq \prod\limits_{i=1}^{n}(T/N_{i}^{a_{i}})\cong T/\prod\limits_{i=1}^{n}N_{i}^{a_{i}}$. It follows that $\prod\limits_{i=1}^{n}N_{i}^{a_{i}}$ is a common ideal of $R$ and $T$, so $R'=R^{*}=T^{*}=T$ and $R:R'=\prod\limits_{i=1}^{n}N_{i}^{a_{i}}$. Hence $(3)$ follows.\\
\\
(3)$\Rightarrow$(1): Assume (3). If $R:R'=(0)$, then $R'$ is a finite $R$-module, which is a contradiction. Therefore $R:R'\neq (0)$, and $R$ is a pullback domain $R/(R:R')\times_{R'/(R:R')}R'$. Therefore, $R$ is Noetherian \cite[Proposition 1.1.7]{fhp} and $R\subseteq R'$ is unibranched \cite[Lemma 1.1.4.(3)]{fhp}. It follows that $R$ is an FAF-domain by Theorem \ref{faf1}.
\end{proof}

\begin{prop}
\label{lpvd58}
Let $R$ be an integral domain. Then the following are equivalent.
\begin{enumerate}
    \item $R$ is a Mori LAPVD with $R:R^{*}\neq (0)$.
    \item There exists a Dedekind domain $T$, maximal ideals $N_{1},\dots, N_{n}$ of $T$, $a_{1},\dots, a_{n}\in\mathbb{N}$ and subfields  $F_{i}$ of $T/N_{i}^{a_{i}}$ for each $i\in\{1,\dots, n\}$, such that $R=\pi^{-1}(\prod\limits_{i=1}^{n} F_{i})$ where $\pi:T\to \prod\limits_{i=1}^{n} (T/N_{i}^{a_{i}})$ is the canonical projection.
    \item $R^{*}$ is a Dedekind domain, $R/(R:R^{*})$ is an Artinian reduced ring and $R/(R:R^{*})\subseteq R^{*}/(R:R^{*})$ is a unibranched ring extension.
\end{enumerate}
\end{prop}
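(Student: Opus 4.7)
The plan is to prove the cyclic implications $(1)\Rightarrow(2)\Rightarrow(3)\Rightarrow(1)$, paralleling Theorem \ref{tafpullback} for Mori LPVDs while accommodating the APVD feature that the exponents $a_i$ in the Dedekind factorization of the conductor may exceed $1$. Proposition \ref{pullback1}'s pullback characterization of APVDs is the structural anchor throughout.

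For $(1)\Rightarrow(2)$, since a Mori LAPVD has $\textnormal{AF-dim}(R)\le 3$ by Theorem \ref{af3}.(2), $R$ is an FAF-domain, so Proposition \ref{p56} furnishes a Dedekind domain $T=R^{*}$, distinct maximal ideals $N_1,\dots,N_n$ of $T$, exponents $a_i\in\mathbb{N}$, and unibranched ring extensions $D_i\subseteq T/N_i^{a_i}$ such that $R=\pi^{-1}(\prod D_i)$. The key work is to upgrade each $D_i$ to a field. Setting $M_i=N_i\cap R$ and localizing, one identifies $D_i\cong R_{M_i}/N_i^{a_i}T_{N_i}$. The APVD $R_{M_i}$ has associated valuation overring $V_i=T_{N_i}$ (a DVR, by Lemma \ref{mc1}) with $M_i R_{M_i}=R_{M_i}:V_i$ by Corollary \ref{c13}.(3). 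Using the Mori conductor-localization identity $(R:R^{*})V_i=R_{M_i}:V_i$, one obtains $M_i R_{M_i}=N_i^{a_i}V_i$, so $D_i\cong R_{M_i}/M_i R_{M_i}$ is the residue field of $R_{M_i}$, and in particular a field.

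For $(2)\Rightarrow(3)$, $R^{*}=T^{*}=T$ by \cite[Lemma 1.1.4.(10)]{fhp} since Dedekind domains are completely integrally closed. After discarding trivial factors one may assume each $F_i\subsetneq T/N_i^{a_i}$; the nilpotent structure of $T/N_i^{a_i}$ then forces $R:R^{*}=\prod N_i^{a_i}$. The Chinese Remainder Theorem yields $R^{*}/(R:R^{*})\cong\prod T/N_i^{a_i}$ and $R/(R:R^{*})\cong\prod F_i$, the latter being Artinian reduced, and the unibranched condition of the inclusion is immediate since every factor inclusion $F_i\hookrightarrow T/N_i^{a_i}$ is between local rings with a unique prime apiece.

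For $(3)\Rightarrow(1)$, we first show $R\subseteq R^{*}$ is unibranched by a dichotomy on maximal ideals $M$ of $R$: if $R:R^{*}\not\subseteq M$, then $R_M=(R^{*})_{R\setminus M}$ is a localization of $R^{*}$ (giving a unique prime of $R^{*}$ over $M$); if $R:R^{*}\subseteq M$, the unibranchedness modulo the conductor furnishes uniqueness. As $R^{*}$ is Dedekind, $R$ inherits finite character. For each maximal ideal $M$ of $R$, \cite[Lemma 1.1.6]{fhp} expresses $R_M$ as a pullback against the DVR $T_N$ (with $N$ the unique prime of $R^{*}$ over $M$): when $R:R^{*}\subseteq M$ the base factor $R_M/(R:R^{*})R_M$ is a field (a component of the Artinian reduced ring $R/(R:R^{*})$), so Proposition \ref{pullback1} certifies $R_M$ as an APVD; otherwise $R_M$ is already a DVR, hence APVD. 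Thus $R$ is an LAPVD, and Corollary \ref{triv58} then delivers that $R$ is Mori. The main obstacle is in $(1)\Rightarrow(2)$, where reconciling the Dedekind-factorization exponent $a_i$ with the primary-ideal exponent appearing in the APVD pullback representation of $R_{M_i}$ rests essentially on the conductor localizing well in Mori domains.
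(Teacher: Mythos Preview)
Your proposal is correct and follows essentially the same route as the paper: the cycle $(1)\Rightarrow(2)\Rightarrow(3)\Rightarrow(1)$ via Proposition \ref{p56}, Proposition \ref{pullback1}, and Corollary \ref{triv58}. You supply more detail in $(1)\Rightarrow(2)$ (the conductor-localization identity yielding $N_i^{a_i}T_{N_i}=M_iR_{M_i}$, where the paper simply invokes Proposition \ref{pullback1}) and in $(2)\Rightarrow(3)$ (justifying $R:R^{*}=\prod N_i^{a_i}$ via the nilpotent structure of $T/N_i^{a_i}$), but the underlying strategy is identical.
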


\begin{proof}
(1)$\Rightarrow$(2): Suppose that (1) holds. By Proposition \ref{p56}, there exists a Dedekind domain $T$, maximal ideals $N_{1},\dots, N_{n}$ of $T$, $a_{1},\dots, a_{n}\in\mathbb{N}$ and unibranched ring extensions $D_{i}\subseteq T/N_{i}^{a_{i}}$ such that $R=\pi^{-1}(\prod\limits_{i=1}^{n} D_{i})$ where $\pi:T\to \prod\limits_{i=1}^{n} (T/N_{i}^{a_{i}})$ is the canonical projection. Since $R_{M_{i}}=D_{i}\times_{T/N_{i}^{a_{i}}}T_{N_{i}}$, $D_{i}$ must be a field by Proposition \ref{pullback1}.\\
\\
(2)$\Rightarrow$(3): If $(2)$ holds, then let $M_{i}=N_{i}\cap R$ for each $i\in\{1,\dots, n\}$. Since $R:R^{*}=N_{1}^{a_{1}}\cdots N_{n}^{a_{n}}$, $\{M_{i}\}_{i=1}^{n}$ is the set of (distinct) prime ideal of $R$ that contains $R:R^{*}$ \cite[Lemma 1.1.4.(6)]{fhp}. It then follows that $R:R^{*}=M_{1}\cap\cdots \cap M_{n}$, and $R/(R:R^{*})$ is an Artinian reduced ring and $R/(R:R^{*})\subseteq R^{*}/(R:R^{*})$ is a unibranched ring extension. By \cite[Lemma 1.1.4.(10)]{fhp}, $R^{*}=T^{*}=T$ is Dedekind.\\
\\
(3)$\Rightarrow$(1): Assume that (3) is true. It is routine to see that $R:R^{*}\neq (0)$.  Since $R$ is the pullback domain $R/(R:R^{*})\times_{R^{*}/(R:R^{*})}R^{*}$, it follows that $R\subseteq R^{*}$ is unibranched \cite[1.1.4.(3)]{fhp}. It also follows that $R$ is an LAPVD by \cite[Lemma 1.1.6]{fhp} and Proposition \ref{pullback1}. By Corollary \ref{triv58}, we obtain the result promised.
\end{proof}

In \cite[Corollary 3.9]{adk}, the authors gave a complete description of AF-dimension of $\mathbb{Z}[\sqrt{m}]$ when $m$ is a square-free integer. On the other hand, \cite[Theorem 2.5]{df87} establishes a characterization theorem of an order of a quadratic number fields being a GPVD. In the last topic of this section, we extend these results simultaneously.

Recall that given a square-free integer $n$, a rank 2 free $\mathbb{Z}$-submodule of a quadratic number field $\mathbb{Q}(\sqrt{n})$ is called an \textit{order} of $\mathbb{Q}(\sqrt{n})$. It is well-known that every quadratic order of $\mathbb{Q}(\sqrt{n})$ is of the form $\mathbb{Z}[r\omega_{n}]$ for some $r\in\mathbb{N}$ where 
\[\omega_{n}=
\begin{cases}\cfrac{1+\sqrt{n}}{2} &\text{ if } n\equiv 1(\textnormal{mod } 4),\\
\\
\sqrt{n} &\text{ if } n\not\equiv 1(\textnormal{mod } 4).
\end{cases}
\]
Recall that for an integer $a$ and a prime number $p$, the Kronecker symbol $a \choose p$ is defined as follows:
\[{a \choose 2}=
\begin{cases}
0 &\text{ if } a\equiv 0(\textnormal{mod } 2)\\
1 & \text{ if } a\equiv \pm1(\textnormal{mod } 8)\\
-1 & \text{ if } a\equiv \pm3(\textnormal{mod } 8)
\end{cases}
\]
when $p\neq 2$, $a\choose p$ equals the Legendre symbol:
\[{a \choose p}=
\begin{cases}
0 &\text{ if } p\mid a\\
1 & \text{ if } p\nmid a \text{ and }  x^2\equiv a(\textnormal{mod } p) \text{ has an integer solution}\\
-1 & \text{ otherwise }
\end{cases}
\]
For a square-free integer $n\not\in\{0,1\}$, 
 \[d_{n}=
\begin{cases}
n & \text{ if } n\equiv 1(\textnormal{mod } 4)\\
4n & \text{ otherwise }
\end{cases}
\]
is called the \textit{discriminant} of  $\mathbb{Q}(\sqrt{n})$.
\begin{theorem}
\label{cox}
\cite[Proposition 5.16]{c89}
Let $n$ be a square-free integer and $p$ a prime number. Then
\begin{enumerate}
    \item If ${d_{n} \choose p}=0$, then $p\mathbb{Z}[\omega_{n}]=P^2$ for some prime ideal $P$ of  $\mathbb{Z}[\omega_{n}]$.
    \item If ${d_{n} \choose p}=1$, then $p\mathbb{Z}[\omega_{n}]=P_{1}P_{2}$ for some distinct prime ideals $P_{1}, P_{2}$ of  $\mathbb{Z}[\omega_{n}]$.
    \item If ${d_{n} \choose p}=-1$, then $p\mathbb{Z}[\omega_{n}]$ is a prime ideal of  $\mathbb{Z}[\omega_{n}]$.
\end{enumerate}
\end{theorem}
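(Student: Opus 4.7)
The plan is to invoke the Dedekind--Kummer factorization theorem, which reduces the splitting of a rational prime $p$ in the ring of integers $\mathcal{O}_{K}=\mathbb{Z}[\omega_{n}]$ of $K=\mathbb{Q}(\sqrt{n})$ to the factorization of the minimal polynomial of $\omega_{n}$ modulo $p$. The classical fact that $\mathbb{Z}[\omega_{n}]$ is the full ring of integers of $\mathbb{Q}(\sqrt{n})$ guarantees that there is no conductor obstruction, so Dedekind--Kummer applies to every rational prime $p$ uniformly.

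First I would write down the minimal polynomial $m(X)\in\mathbb{Z}[X]$ of $\omega_{n}$ over $\mathbb{Q}$ in the two cases. When $n\equiv 1\pmod{4}$ we have $m(X)=X^{2}-X+(1-n)/4$, which has discriminant $n$, and otherwise $m(X)=X^{2}-n$, which has discriminant $4n$. In either case the discriminant of $m$ equals $d_{n}$, which is the crucial link between the factorization behavior of $m$ mod $p$ and the Kronecker symbol ${d_{n}\choose p}$.

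Next, for an odd prime $p$, I would split into three subcases. If ${d_{n}\choose p}=0$ then $p\mid d_{n}$, so $m(X)\bmod p$ has a repeated root $\bar a$, and Dedekind--Kummer gives $p\mathcal{O}_{K}=(p,\omega_{n}-a)^{2}$, the ramified case. If ${d_{n}\choose p}=1$, then $d_{n}$ is a nonzero square mod $p$, so $m\bmod p$ has nonzero square discriminant and factors into two distinct linear factors; Dedekind--Kummer then yields $p\mathcal{O}_{K}=P_{1}P_{2}$ with $P_{1}\neq P_{2}$. If ${d_{n}\choose p}=-1$, then $m(X)\bmod p$ is an irreducible quadratic and $p\mathcal{O}_{K}$ remains prime, corresponding to a residue field extension of degree two.

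The main obstacle is the prime $p=2$, since the Kronecker symbol ${d_{n}\choose 2}$ is defined by congruence conditions mod $8$ rather than by squareness in $\mathbb{F}_{2}$, so the discriminant-based dichotomy above needs a separate verification. Here I would argue directly by splitting on the residue of $n$ modulo $8$: when $n\equiv 1\pmod{8}$ the polynomial $X^{2}-X+(1-n)/4$ reduces to $X(X+1)\pmod{2}$, giving the split case; when $n\equiv 5\pmod{8}$ it reduces to $X^{2}+X+1$, irreducible mod $2$, giving inertness; and when $n\not\equiv 1\pmod{4}$, the polynomial $X^{2}-n$ collapses to either $X^{2}$ or $(X+1)^{2}$ in $\mathbb{F}_{2}[X]$, producing ramification. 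Cross-checking these three subcases with the defining mod-$8$ conditions for ${d_{n}\choose 2}\in\{0,1,-1\}$ finishes $p=2$ and completes the proof.
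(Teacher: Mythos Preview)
Your argument via Dedekind--Kummer is correct and is the standard route to this classical splitting law; the minimal polynomials, discriminant computations, and case analysis (including the separate treatment of $p=2$ by residues mod $8$) are all in order.

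Note, however, that the paper does not supply its own proof of this theorem: it is simply quoted from \cite[Proposition~5.16]{c89} and used as a black box in the subsequent computation of AF-dimensions of quadratic orders. So there is no in-paper argument to compare yours against. What you have sketched is essentially the proof one finds in Cox (or any standard algebraic number theory text), so your proposal is both correct and aligned with the cited source.
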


\begin{theorem}
\label{quad}
Let $n$ be a square-free integer, $r\in\mathbb{N}\setminus\{1\}$ with prime factorization $r=p_{1}^{a_{1}}\cdots p_{m}^{a_{m}}$, and
$R=\mathbb{Z}[r\omega_{n}]$. Then
\begin{enumerate}
    \item $\textnormal{AF-dim}(R)<\infty$ if and only if ${d_{n} \choose p_{i}}\neq 1$ for each $i\in\{1,\dots, m\}$. 
\item Suppose that $\textnormal{AF-dim}(R)<\infty$, and set\begin{align*}
    \mathcal{F}_{1}&=\{i\in\{1,\dots, m\}\mid {d_{n} \choose p_{i}}=0\},\\
    \mathcal{F}_{2}&=\{i\in\{1,\dots, m\}\mid {d_{n} \choose p_{i}}=-1\}.
\end{align*} Then $\textnormal{AF-dim}(R)=
\max\{\max\limits_{i\in\mathcal{F}_{1}}\{2a_{i}+1\}, \max\limits_{i\in\mathcal{F}_{2}}\{2a_{i}\}\}.
$\end{enumerate}
\end{theorem}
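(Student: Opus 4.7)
The plan is to reduce everything to a local AF-dimension computation at each maximal ideal containing the conductor, and then apply the pullback structure of quadratic orders. Write $R^{*} = \mathbb{Z}[\omega_{n}]$, which is Dedekind, and note that the conductor $R:R^{*}=r\mathbb{Z}[\omega_{n}]$ is nonzero. Since $R$ is a Noetherian domain of Krull dimension $1$, it is Mori and of finite character, so by Theorem~\ref{faf1} (together with Lemma~\ref{c31c}) AF-dim$(R)$ is finite precisely when $R\subseteq R^{*}$ is a unibranched extension, and in that case equals the maximum of AF-dim$(R_{M})$ as $M$ ranges over the maximal ideals of $R$ containing the conductor. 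These are exactly the maximal ideals above the prime divisors $p_{i}$ of $r$.

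The first step is to check that for each $p_{i}\mid r$, the ring $R/p_{i}R$ is local with a single maximal ideal $M_{i}$. Since $p_{i}\mid r$, the element $r\omega_{n}\in R$ satisfies $(r\omega_{n})^{2}=r^{2}\omega_{n}^{2}\in p_{i}R$, so $R/p_{i}R\cong \mathbb{F}_{p_{i}}[t]/(t^{2})$, whose only maximal ideal is $(t)$. Hence there is a unique $M_{i}$ above $p_{i}$, and unibranchedness at $M_{i}$ is equivalent to $p_{i}R^{*}$ having exactly one maximal ideal above it. Theorem~\ref{cox} translates this into ${d_{n}\choose p_{i}}\neq 1$, giving part~(1). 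For part~(2), let $V_{i}=(R^{*})_{N_{i}}$ be the DVR localization at the unique prime $N_{i}$ above $M_{i}$, with maximal ideal $\mathfrak{m}_{i}$. After absorbing the unit factor $s$ with $r=p_{i}^{a_{i}}s$, one checks $R_{M_{i}}=\mathbb{Z}_{(p_{i})}+p_{i}^{a_{i}}\omega_{n}\mathbb{Z}_{(p_{i})}$ and $R_{M_{i}}:V_{i}=p_{i}^{a_{i}}V_{i}$; this equals $\mathfrak{m}_{i}^{2a_{i}}$ in the ramified case and $\mathfrak{m}_{i}^{a_{i}}$ in the inert case.

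In the ramified case ($i\in\mathcal{F}_{1}$) both $R_{M_{i}}$ and $V_{i}$ have residue field $\mathbb{F}_{p_{i}}$, so $R_{M_{i}}$ fits the pullback pattern of Corollary~\ref{c53} with parameter $n=a_{i}-1$: the valuation subsemigroup of $R_{M_{i}}$ in $V_{i}$ is $\{0,2,4,\ldots,2a_{i},2a_{i}+1,2a_{i}+2,\ldots\}$, matching $F+X^{2}L+\cdots+X^{2(a_{i}-1)}L+X^{2a_{i}}L[X]$. Hence AF-dim$(R_{M_{i}})=2(a_{i}-1)+3=2a_{i}+1$. In the inert case ($i\in\mathcal{F}_{2}$) the residue field of $V_{i}$ is $\mathbb{F}_{p_{i}^{2}}$, so Corollary~\ref{c53} does not directly apply. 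The strategy is to compute $M_{loc}^{k}=p_{i}^{k}\mathbb{Z}_{(p_{i})}+p_{i}^{k+a_{i}-1}\omega_{n}\mathbb{Z}_{(p_{i})}$ from $M_{loc}=(p_{i},p_{i}^{a_{i}}\omega_{n})R_{M_{i}}$, then apply Lemma~\ref{l58}. The ideal $I_{0}=(p_{i}^{a_{i}}\omega_{n})R_{M_{i}}=p_{i}^{2a_{i}}\mathbb{Z}_{(p_{i})}+p_{i}^{a_{i}}\omega_{n}\mathbb{Z}_{(p_{i})}$ has $\omega_{R_{M_{i}}}(I_{0})=2a_{i}$; moreover $I_{0}+M_{loc}^{2a_{i}-1}=M_{loc}$, so any factorization of $I_{0}$ into $(2a_{i}-1)$-absorbing ideals forces each factor to contain $M_{loc}$, which quickly contradicts $I_{0}\neq M_{loc}^{\ell}$ for any $\ell$. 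This gives the lower bound $2a_{i}$. For the upper bound one shows every proper ideal of $R_{M_{i}}$ decomposes into a product of ideals of the form $p_{i}^{k}R_{M_{i}}$, $(p_{i}^{a_{i}}\omega_{n})R_{M_{i}}$ and powers of $M_{loc}$, each of which is at most $2a_{i}$-absorbing.

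The main obstacle is the upper bound in the inert case: unlike the ramified setting, where Corollary~\ref{c53} packages the AF-dimension into a single existing statement, here one must carry out an explicit ideal-theoretic classification in $R_{M_{i}}$. Because $R_{M_{i}}$ is not a PID (the maximal ideal $M_{loc}$ is not principal when $a_{i}\geq 2$), each ideal has to be tracked by its pair of $p_{i}$-adic exponents in the real and $\omega_{n}$-components, and one must verify that every such ideal factors (in the ideal-product sense) into the distinguished family above. Once this classification is in place, combining the local formulas over all $i$ with Lemma~\ref{c31c} yields the claimed global AF-dimension.
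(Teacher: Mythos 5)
Your reduction for part (1) is sound and essentially the paper's: the conductor is $r\mathbb{Z}[\omega_{n}]$, $R/p_{i}R\cong\mathbb{F}_{p_{i}}[t]/(t^{2})$ is local, so finiteness of the AF-dimension is equivalent to unibranchedness of $R\subseteq R^{*}$, which Theorem \ref{cox} converts into ${d_{n}\choose p_{i}}\neq 1$. The local reduction for part (2) (only the maximal ideals over the $p_{i}$ matter, $M_{loc}^{2}=p_{i}M_{loc}$, conductor $=p_{i}^{a_{i}}V_{i}$) is also correct. But part (2) has genuine gaps in both cases. In the ramified case you conclude $\textnormal{AF-dim}(R_{M_{i}})=2a_{i}+1$ by ``matching'' the value semigroup of $R_{M_{i}}$ in $V_{i}$ with that of the ring of Corollary \ref{c53}. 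Corollary \ref{c53} is a statement about one specific ring $F+X^{2}L+\cdots+X^{2n}L+X^{2n+2}L[X]$; there is no isomorphism between $R_{M_{i}}$ and that ring, and no transfer principle in the paper (or in your argument) saying that AF-dimension depends only on the value semigroup of the local ring inside its DVR overring. Equality of value semigroups does not by itself control $\omega_{R}(I)$ for all ideals $I$, so this step is an analogy, not a proof. The paper instead runs the computation directly: for $I\not\subseteq M^{2}$ one has $IV=fV$ with $v(f)\le 2a_{i}+1$, hence $M^{2a_{i}+1}=p_{i}^{2a_{i}}M\subseteq fM\subseteq I$, and the ideal $p_{i}^{a_{i}}bS$ (with $b$ a uniformizer) is a witness attaining $2a_{i}+1$; Lemma \ref{l58}.(2) then closes the case.

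In the inert case your witness $I_{0}=(p_{i}^{a_{i}}\omega_{n})R_{M_{i}}$ is correct (it is the paper's $p_{j}^{a_{j}}wS$ with $w=\omega_{n}$, a unit of $V_{i}$ outside $S+N$), and $\omega_{R_{M_{i}}}(I_{0})=2a_{i}$ does follow from $M_{loc}^{k}=p_{i}^{k}\mathbb{Z}_{(p_{i})}+p_{i}^{a_{i}+k-1}\omega_{n}\mathbb{Z}_{(p_{i})}$ and Lemma \ref{l58}.(1). However, your auxiliary identity $I_{0}+M_{loc}^{2a_{i}-1}=M_{loc}$ is false for $a_{i}\ge 2$ (the left side is $p_{i}^{2a_{i}-1}\mathbb{Z}_{(p_{i})}+p_{i}^{a_{i}}\omega_{n}\mathbb{Z}_{(p_{i})}$), and the ensuing ``each factor must contain $M_{loc}$'' argument is both unnecessary and unjustified. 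More seriously, your upper bound is an unexecuted program: you propose to classify all ideals of $R_{M_{i}}$ and factor each into a distinguished family, and you yourself flag this as the main obstacle. This is exactly the work that Lemma \ref{l58}.(2) is designed to eliminate: since $M_{loc}^{2}=p_{i}M_{loc}$, it suffices to show $M_{loc}^{2a_{i}}\subseteq I$ for every ideal $I\not\subseteq M_{loc}^{2}$, which follows in two lines from $IV_{i}=p_{i}^{s}V_{i}$ with $s\le a_{i}$ (if $s\ge a_{i}+1$ then $I\subseteq p_{i}^{a_{i}+1}V_{i}\cap R_{M_{i}}\subseteq p_{i}M_{loc}=M_{loc}^{2}$). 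Until the ramified transfer and the inert upper bound are replaced by actual computations of $\max\{\omega_{R_{M_{i}}}(I)\mid I\not\subseteq M_{loc}^{2}\}$, the proof of part (2) is incomplete.
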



\begin{proof}
(1): Let $I=R:R'$. Note that $R'=\mathbb{Z}[\omega_{n}]$ is a Dedekind domain. Thus we have the pullback\[ \begin{tikzcd}
R \arrow{r} \arrow[swap]{d} & 
R/I \cong \mathbb{Z}/r\mathbb{Z}\arrow{d}{\iota} \\%
R'\arrow{r}{\pi}& R'/I\cong R'/rR'.
\end{tikzcd}
\]
 It follows that $R$ is an FAF-domain if and only if the number of minimal prime ideals of $I$ in $R$ is the same as that of $I$ in $R'$ (Proposition \ref{fafp}), which happens exactly when $pR'$ is a power of a prime ideal of $R'$ for each prime factor $p$ of $r$ in $\mathbb{Z}$. Theorem \ref{cox} then yields (1).\\
\\
(2): Note that $\{1,\dots, m\}=\mathcal{F}_{1}\cup \mathcal{F}_{2}$ by (1). Fix $j\in\{1,\dots, m\}$, and let $N$ be the maximal ideal of $R'$ that contains $p_{j}R'$. Let $V=R'_{N}$, $S=R_{N\cap R}$ and $M$ the maximal ideal of $S$. Then $V$ is a DVR, and  $M^2=p_{j}M$ since $M=p_{j}S+p_{j}^{a_{j}}V$. Choose $b\in N$ such that $NV=bV$.  Suppose that $j\in\mathcal{F}_{1}$. Then $p_{j}R'=N^2$ by Theorem \ref{cox}, so $N^{2a_{j}+2}V=p_{j}^{a_{j}+1}V\subseteq p_{j}M=M^2$. Hence if $I$ is an ideal of $S$ such that $I\not\subseteq M^2$, then  $IV=fV$ for some $f\in I$ such that $fV=N^{t}V$ for some $t\le 2a_{j}+1$. In other words, $f=b^{t}u$ for some unit $u$ of $V$.  Then $p_{j}^{2a_{j}+1}\in fp_{j}^{a_{j}}V$
and $p_{j}^{3a_{j}+1}V\subseteq fp_{j}^{a_{j}}V$, so $M^{2a_{j}+1}=p_{j}^{2a_{j}}M\subseteq fM\subseteq I$. On the other hand, $J=p_{j}^{a_{j}}bS$ is an ideal of $S$ such that $J\not\subseteq M^2$, and $p_{j}^{2a_{j}}\in M^{2a_{j}}\setminus J$. Thus by Lemma \ref{l58}, $\textnormal{AF-dim}(R_{N\cap R})=2a_{j}+1$.

Now consider the case when $j\in \mathcal{F}_{2}$. Then $N=p_{j}R'$ by Theorem \ref{cox}. We have  $N^{a_{j}+1}V=p_{j}^{a_{j}+1}V\subseteq p_{j}M=M^2$. Hence if $I$ is an ideal of $S$ such that $I\not\subseteq M^2$, then  $IV=p_{j}^{s}V$ for some $s\le a_{j}$. In other words, $f=b^{s}v$ for some unit $v$ of $V$. Since $p_{j}^{2a_{j}}\subseteq fp_{j}^{a_{j}}V$ and $p_{j}^{3a_{j}-1}\subseteq fp_{j}^{a_{j}}V$,  
 it follows that $M^{2a_{j}}=p_{j}^{2a_{j}-1}M\subseteq I$.  We then claim that $V\neq S+N$. Indeed, if $V=S+N$, then $M=p_{j}S+p_{j}^{a_{j}}V=p_{j}S+p_{j}^{a_{j}}(S+N)=p^{j}S+p^{a_{j}}S+p^{a_{j}}N=p_{j}S$, so $S$ is a DVR by Krull intersection theorem. Then  $S=V$, which is a contradiction. Hence, $V\neq S+N$ and there exists a unit $w$ of $V$ such that $w\not\in S+N$. Note also that $w^{-1}\not\in S+N$ since $S+N$ is an integral overring of $S$. Now $J'=p_{j}^{a_{j}}wS$ is an ideal of $S$ not in $M^2$. Indeed, if $p_{j}^{a_{j}}w\in M^2$, then $p_{j}^{a_{j}}w=p_{j}^2s+p_{j}^{a_{j}+1}u$ for some $s\in S, u\in V$. We may assume that $s=p^{a}u'$ for some $a\in\mathbb{N}_{0}$ and $u'\in R$, where $u'$ is a unit of $R$. Then $p_{j}^{a_{j}}w-p_{j}^2s=p_{j}^{a_{j}+1}u$. Since $p_{j}$ generate the maximal ideal of $V$, we must have $a=a_{j}-2$ and $w=u'+p_{j}u\in S+N$, a contradiction. Similarly, we deduce that $p_{j}^{a_{j}-1}w^{-1}\not\in S$, so $p_{j}^{2a_{j}-1}\in M^{2a_{j}-1}\setminus J'$. Therefore, we have $\textnormal{AF-dim}(R_{N\cap R})=2a_{j}$ when $j\in\mathcal{F}_{2}$ by Lemma \ref{l58}. The conclusion now follows from Proposition \ref{fafp} and the pullback structure of $R$ discussed in the proof of (1) of this theorem.
\end{proof}

The first part of the following corollary retrieves \cite[Theorem 2.5]{df87}.

\begin{coro}
\label{simp8}
Let $n\in\mathbb{Z}\setminus\{0\}$ be square-free, and $R=\mathbb{Z}[r\omega_{n}]$ for some $r\in\mathbb{N}$. 
\begin{enumerate}
    \item The following are equivalent.
\begin{enumerate}
    \item $R$ is a GPVD.
    \item $R$ is an LPVD.
    \item $\textnormal{AF-dim}(R)\le 2$.
    \item $r$ is square-free and ${d_{n}\choose p}=-1$ whenever $p$ is a prime factor of $r$.
\end{enumerate}
    \item The following are equivalent.
\begin{enumerate}
    \item $R$ is an LAPVD.
    \item $\textnormal{AF-dim}(R)\le 3$.
    \item $r$ is square-free and ${d_{n}\choose p}\neq 1$ whenever $p$ is a prime factor of $r$.
\end{enumerate}
 \item The following are equivalent.
\begin{enumerate}
    \item $R$ is locally conducive.
    \item $\textnormal{AF-dim}(R)<\infty$.
    \item ${d_{n}\choose p}\neq 1$ whenever $p$ is a prime factor of $r$.
\end{enumerate}
\end{enumerate}
\end{coro}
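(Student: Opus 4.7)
The plan is to treat the three parts in parallel, using the explicit formula of Theorem~\ref{quad}(2) as the common computational engine and converting the resulting numerical conditions into the ring-theoretic ones via Propositions~\ref{tafmori} and~\ref{lpvd58}, Lemma~\ref{66} and Theorem~\ref{faf1}. The case $r=1$ is immediate since then $R=\mathbb{Z}[\omega_{n}]$ is Dedekind, so every listed condition holds trivially; otherwise I may assume $r\ge 2$ as in Theorem~\ref{quad}. Throughout, $R$ is Noetherian (hence Mori), $R^{*}=R'=\mathbb{Z}[\omega_{n}]$ is Dedekind, and the conductor $R:R^{*}=rR'$ is nonzero, so all hypotheses of the quoted results are automatic.

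For Part~(3), the equivalence (b)$\iff$(c) is Theorem~\ref{quad}(1), and (a)$\iff$(b) follows from Theorem~\ref{faf1}. For Part~(1), (a)$\iff$(b) is Lemma~\ref{66}, (b)$\iff$(c) is Proposition~\ref{tafmori}, and for (c)$\iff$(d) I would read off from Theorem~\ref{quad}(2) that $\textnormal{AF-dim}(R)\le 2$ forces $\mathcal{F}_{1}=\emptyset$ (because $2a_{i}+1\ge 3$ always) and $a_{i}=1$ for every $i\in\mathcal{F}_{2}$, which is precisely the condition that $r$ is square-free with ${d_{n}\choose p}=-1$ for every prime factor $p$ of $r$.

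For Part~(2), the equivalence (b)$\iff$(c) again drops out of Theorem~\ref{quad}(2): $\textnormal{AF-dim}(R)\le 3$ forces $\mathcal{F}_{1}\cup\mathcal{F}_{2}=\{1,\dots,m\}$ together with $a_{i}=1$ for all $i$ (since $2a_{i}+1\le 3$ and $2a_{i}\le 3$ both force $a_{i}=1$), and conversely these conditions bound every term of the maximum by $3$. The step I expect to require the most care is (a)$\iff$(c), where I would invoke Proposition~\ref{lpvd58}(3): writing $I=R:R^{*}=rR'$, the quotient $R/I\cong\mathbb{Z}/r\mathbb{Z}$ is Artinian reduced exactly when $r$ is square-free, while the inclusion $R/I\hookrightarrow R^{*}/I=R'/rR'$ is unibranched exactly when each $p_{i}R'$ lies beneath a single maximal ideal of $R'$; by Theorem~\ref{cox} the latter is precisely ${d_{n}\choose p_{i}}\neq 1$ for every $i$. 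This bridge between the unibranched condition on $R\subseteq R^{*}$ and the Kronecker-symbol dichotomy is the only nontrivial point; once it is in place, the three-way comparison of the cases $\textnormal{AF-dim}(R)\in\{1,2,3,\infty\}$ assembles all three parts of the corollary.
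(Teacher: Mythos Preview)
Your proposal is correct and follows essentially the same approach as the paper: both rely on Theorem~\ref{quad} for the numerical equivalences, Theorem~\ref{faf1} for Part~(3), Lemma~\ref{66} and Proposition~\ref{tafmori} for Part~(1), and Proposition~\ref{lpvd58} with Theorem~\ref{cox} for Part~(2). The only cosmetic difference is that you exploit the biconditional form of Lemma~\ref{66} and Proposition~\ref{lpvd58} to get $(a)\Leftrightarrow(b)$ and $(a)\Leftrightarrow(c)$ directly, whereas the paper closes the cycle via $(d)\Rightarrow(a)$ using Theorem~\ref{tafpullback} in Part~(1) and via $(a)\Rightarrow(b)$ using Theorem~\ref{af3}(2) in Part~(2); your organization is marginally cleaner but not substantively different.
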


\begin{proof}
(1): Since $R$ is Noetherian and $R:R'\neq (0)$, $(a)\Rightarrow (b)$ follows from Lemma \ref{66}, and
$(b)\Rightarrow(c)$ from Proposition \ref{tafmori}. The equivalence of $(c)$ and $(d)$ follows from Theorem \ref{quad} and \cite[Corollary 3.9]{adk}.  $(d)\Rightarrow(a)$ can be deduced from Theorem \ref{tafpullback} and Theorem \ref{cox}.\\
\\
(2): $(a)\Rightarrow (b)$ follows from Theorem \ref{af3}.(2). The equivalence of $(b)$ and $(c)$ follows from Theorem \ref{quad} and \cite[Corollary 3.9]{adk}.   $(c)\Rightarrow(a)$ can be deduced from Proposition \ref{lpvd58} and Theorem \ref{cox}.\\
\\
(3): Follows from Theorems \ref{faf1} and \ref{quad}.(1).
\end{proof}

\section{Rings of the form $A+XB[X]$}
How the polynomial ring $R[X]$ behaves when we manipulate the coefficient ring $R$ has been a stimulating topic for ring theorists. Probably the most famous result in this direction is the celebrated Hilbert basis theorem which states that if $R$ is a Noetherian ring, then so is $R[X]$. In this spirit, several researchers studied the structure of the rings of the form $A+XB[X]$ where $A\subseteq B$ is an extension of rings (see \cite{z03} and its reference list), and one of the main topics was the investigation of Krull dimension of $A+XB[X]$ under the assumption that $A$ and $B$ are integral domains. In this section, we take an opposite approach by restricting the Krull dimension of $A+XB[X]$ to one and studying the behavior of $A$ and $B$. Specifically, we characterize when $A+XB[X]$ is a TAF-ring in terms of $A$ and $B$. 
We first need the following well-known lemma.

\begin{lemm}
\label{kzero}
\cite[Exercise 1-6.1]{K}
 Let $A\subseteq B$ an extension of rings and $P$ a minimal prime ideal of $A$. Then $N\cap A=P$ for some prime ideal $N$ of $B$.
\end{lemm}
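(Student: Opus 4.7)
The plan is to use a standard localization argument. I let $S = A \setminus P$, which is a multiplicatively closed subset of $A \subseteq B$ since $P$ is prime, and consider the localization $S^{-1}B$. The goal is to produce a maximal ideal of $S^{-1}B$ whose contraction to $A$ is exactly $P$.

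The key point to verify is that $S^{-1}B$ is nonzero. This is immediate: $S^{-1}B = 0$ would require $0 \in S$, but since $P$ is an ideal we have $0 \in P$, hence $0 \notin S$. So $S^{-1}B$ admits a maximal ideal $\mathfrak{m}$ by Zorn's Lemma, and letting $N$ denote the contraction of $\mathfrak{m}$ to $B$ under the canonical map $B \to S^{-1}B$, I obtain a prime ideal $N$ of $B$ disjoint from $S$. Translating this back to $A$, we have $N \cap A \subseteq P$. Since $N \cap A$ is itself a prime ideal of $A$ contained in the minimal prime $P$, the minimality of $P$ forces $N \cap A = P$, as desired.

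There is essentially no obstacle in this argument; the subtlest step is the nonvanishing of $S^{-1}B$, and this is handled by the elementary observation that $0 \in P$ prevents $0$ from lying in $S$.
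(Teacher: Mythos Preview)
Your proof is correct and follows essentially the same approach as the paper: both set $S = A \setminus P$ and produce a prime ideal of $B$ disjoint from $S$, then use minimality of $P$ to conclude. The only cosmetic difference is that the paper phrases this via an ideal of $B$ maximal with respect to missing $S$, while you phrase it via a maximal ideal of $S^{-1}B$ and contraction---these are standard equivalent formulations.
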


\begin{proof}
Let $S=A\setminus P$. Then $S$ is a multiplicatively closed subset of $B$. Choose an ideal $N$ maximal with respect to the property such that $N\cap S=\emptyset$. Such $N$ is a prime ideal of $B$, so $N\cap A$ is a prime ideal of $A$ contained in $P$. Since $P$ is a minimal prime ideal of $A$, we have $N\cap A=P$.
\end{proof}
 
In the following lemma, a couple of well-known results concerning the prime ideals of rings of the form $A+XB[X]$ are collected. Recall that when we say $\textnormal{Spec}(R)$ is \textit{Noetherian} for a ring $R$, we mean that $R$ satisfies the ascending chain condition on radical ideals.
 
 \begin{lemm}
 \label{con56}
Let $A\subseteq B$ an extension of rings and $R=A+XB[X]$. Then 
\begin{enumerate}
    \item  $\textnormal{Spec}(R)=S_{1}\cup S_{2}$, where \begin{align*}
    S_{1}&=\{P+XB[X]\mid P\in \textnormal{Spec}(A)\},\\
    S_{2}&=\{Q\cap R\mid Q\in \textnormal{Spec}(B[X]), X\not\in Q\}.
\end{align*}
\item $\textnormal{Spec}(R)$ is Noetherian if and only if $\textnormal{Spec}(A)$ and $\textnormal{Spec}(B)$ are Noetherian.
\item $1+\max\{\textnormal{dim}(A), \textnormal{dim}(B)\}\le\textnormal{dim}(R)\le \textnormal{dim}(A)+\textnormal{dim}(B[X])$.
\end{enumerate}
 \end{lemm}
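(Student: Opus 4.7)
My approach is to exploit the pullback structure of $R$. Writing $\epsilon\colon B[X] \to B$ for evaluation at $X = 0$, one has $R = \epsilon^{-1}(A)$, and $\ker\epsilon = XB[X]$ is a common ideal of $R$ and $B[X]$. A short calculation gives $R_X = B[X, X^{-1}]$, since for $b \in B$ one has $b = (bX)(X^{-1})$ with $bX \in XB[X] \subseteq R$.

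For (1), I would first establish the dichotomy that a prime $P \in \textnormal{Spec}(R)$ contains $X$ if and only if it contains $XB[X]$. The nontrivial direction uses a squaring trick: for any $b \in B$, the element $Xb^2$ lies in $XB[X] \subseteq R$, so $(Xb)^2 = X \cdot Xb^2 \in XR \subseteq P$, whence $Xb \in P$ by primality. Primes containing $XB[X]$ then match, via $R/XB[X] \cong A$, the elements of $S_1$. Primes not containing $X$ survive to $R_X$ and correspond, through $R_X = B[X, X^{-1}]$, to primes of $B[X]$ not containing $X$ by contraction-extension, giving $S_2$.

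For (2), I plan to use (1) to decompose $\textnormal{Spec}(R) = V(XB[X]) \cup D(X)$, where $V(XB[X])$ is homeomorphic to $\textnormal{Spec}(A)$ and $D(X)$ is homeomorphic to $\textnormal{Spec}(B[X, X^{-1}])$, an open subspace of $\textnormal{Spec}(B[X])$. Invoking the classical theorem of Ohm--Pendleton that $\textnormal{Spec}(B[X])$ is Noetherian whenever $\textnormal{Spec}(B)$ is, together with the stability of Noetherian spectra under closed and open subspaces and finite unions, the forward implication follows. Conversely, if $\textnormal{Spec}(R)$ is Noetherian, then $V(XB[X]) \cong \textnormal{Spec}(A)$ is Noetherian as a closed subspace, and $D(X) \cong \textnormal{Spec}(B[X, X^{-1}])$ is Noetherian as an open subspace; this Noetherianness transfers to $\textnormal{Spec}(B)$ via the continuous surjection $Q \mapsto Q \cap B$, which is surjective because $P \mapsto PB[X, X^{-1}]$ provides a set-theoretic section.

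For (3), the lower bound is obtained by lifting chains. Starting from a maximal chain $P_0 \subsetneq \cdots \subsetneq P_d$ in $A$ with $d = \dim A$ and $P_0$ a minimal prime of $A$, Lemma \ref{kzero} provides $Q \in \textnormal{Spec}(B)$ with $Q \cap A = P_0$; then $Q[X] \cap R = P_0 + XQ[X] \subsetneq P_0 + XB[X]$ extends the chain by one step in $R$. A symmetric construction starting from a chain in $B$ produces a chain of length $\dim B + 1$ in $R$. For the upper bound, any chain $P_0 \subsetneq \cdots \subsetneq P_n$ in $R$ splits at a unique index $k$ with $P_0, \ldots, P_k \in S_2$ and $P_{k+1}, \ldots, P_n \in S_1$ (allowing $k = -1$ or $k = n$). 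The $S_1$-tail gives a chain in $A$ of length $n-k-1$, while the $S_2$-head lifts to a chain $Q_0 \subsetneq \cdots \subsetneq Q_k$ in $B[X]$ which I extend by one step using a prime of $B[X]$ containing the proper ideal $Q_k + XB[X]$, producing a chain of length $k+1 \le \dim B[X]$. Summing yields $n \le \dim A + \dim B[X]$. The main obstacle will be the clean identification of $XB[X]$ as the correct ideal for the $S_1/S_2$ dichotomy via the squaring trick in (1); once that is in place, parts (2) and (3) reduce to familiar arguments on Noetherian spaces and chain-splitting.
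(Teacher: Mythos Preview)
Your proof is correct and considerably more self-contained than the paper's. For (1) and (2) the paper simply cites \cite[Lemma 1.1.4]{fhp} and \cite[Proposition 6.1.(2)]{p97}, whereas you unpack both: the squaring trick $(Xb)^2 = X\cdot(Xb^2)$ cleanly shows that $X\in P$ forces $XB[X]\subseteq P$, and your topological argument for (2) (Ohm--Pendleton plus the fact that a finite union of Noetherian subspaces is Noetherian, plus transfer along the continuous surjection $\textnormal{Spec}(B[X,X^{-1}])\to\textnormal{Spec}(B)$) is a genuine proof rather than a citation. For (3) the lower bound is argued the same way in both; for the upper bound the paper invokes \cite[Theorem 1.4]{fhp} (that $\textnormal{Spec}(R)$ is a quotient of $\textnormal{Spec}(A)\sqcup\textnormal{Spec}(B[X])$), while your chain-splitting argument is direct and elementary.

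One point deserves a line of justification: you assert that $Q_k + XB[X]$ is proper in $B[X]$, but this is not automatic (e.g.\ $Q_k=(X-1)$ over a field). It does hold in your situation precisely because $k<n$: if $1=q+Xf$ with $q\in Q_k$, then $q=1-Xf\in R$, so $q\in Q_k\cap R=P_k\subseteq P_{k+1}=P'+XB[X]$, forcing $1\in P'$, which is absurd. With this sentence added, your argument is complete; the payoff is an explicit, reference-free proof where the paper relies on three external results.
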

 
 \begin{proof}
(1): Since $R$ is a pullback ring $A\times_{B[X]/XB[X]}B[X]$, (1) holds by \cite[Lemma 1.1.4]{fhp}.\\
\\
(2): \cite[Proposition 6.1.(2)]{p97}.\\
\\
(3): Let $P\in\textnormal{Spec}(A)$ and $M\in\textnormal{Spec}(B)$. Choose a minimal prime ideal $P_{0}$ of $A$ contained in $P$, and $N\in\textnormal{Spec}(B)$ such that $P_{0}=N\cap A$ (such $N$ exists by Lemma \ref{kzero}). Then $N[X]\cap R=P_{0}+XN[X]$ is a minimal prime ideal of $R$ properly contained in $P+XB[X]$. Hence $1+ht_{A}(P)\le ht_{R}(P+XB[X])\le\textnormal{dim}(R)$. On the other hand, $M[X]\cap R=(M\cap A)+XM[X]\subsetneq (M\cap A)+XB[X]$, so by (1) $1+ht_{B}(M)\le 1+ht_{R}(M[X]\cap R)\le\textnormal{dim}(R)$. Thus the first inequality of (3) follows. The second inequality follows from the fact that $\textnormal{Spec}(R)$ is a quotient space of the disjoint union of $\textnormal{Spec}(A)$ and $\textnormal{Spec}(B[X])$ \cite[Theorem 1.4]{fhp}.
 \end{proof}
 
Throughout this manuscript, whenever $R$ is of the form $A+XB[X]$ for a ring extension $A\subseteq B$, the notations $S_{1}$ and $S_{2}$ will be used to denote the sets introduced in Lemma \ref{con56}.(1).  The following theorem, taken from \cite[Corollaire 9]{r81}, is crucial in the proof of Lemma \ref{keylemma}.

\begin{theorem}
\label{rs2}
Let $A\subseteq B$ be an extension of rings such that $B$ is a strongly Laskerian ring. Suppose that there exists a nonzero common ideal $I$ of $A$ and $B$ contained in only finitely many prime ideals of $A$, and these prime ideals are all maximal ideals of $A$.
Then $A$ is a strongly Laskerian ring.
\end{theorem}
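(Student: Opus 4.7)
The plan is to exploit the structural dichotomy induced by the common ideal $I$: primes of $A$ not containing $I$ are controlled by the extension to $B$, while primes of $A$ containing $I$ are controlled by the finiteness-and-maximality hypothesis. The crucial starting observation is that $A/I$ embeds into $B/I$ (since $I$ is an ideal of both rings), and $B/I$ inherits the strongly Laskerian property from $B$ as a quotient.

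The first step is to show that $A/I$ is itself strongly Laskerian. Since $\Spec(A/I)$ is finite with all primes maximal, $A/I$ is semi-local and zero-dimensional, so every radical ideal is automatically a finite intersection of maximal ideals, and the task reduces to verifying the power-of-radical condition for primary ideals. This condition can be transferred from the strongly Laskerian ring $B/I$ across the embedding $A/I \hookrightarrow B/I$ after localizing at each of the finitely many maximal ideals of $A/I$ and matching them with the primes of $B/I$ lying above, the existence of which is guaranteed by Lemma \ref{kzero}.

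The second step is to handle a general ideal $J$ of $A$. I would consider a primary decomposition $JB = Q_1 \cap \cdots \cap Q_n$ in $B$ and set $J^{*} = JB \cap A = \bigcap_{i}(Q_i \cap A)$, a finite intersection of primary ideals of $A$ (contractions of primary are primary when proper). A direct computation gives $I \cdot J^{*} \subseteq I \cdot JB \subseteq IJ \subseteq J$, using that $IB = I$ since $I$ is already an ideal of $B$; hence $J^{*}/J$ is annihilated by $I$ and so is naturally an $A/I$-module. Combining the decomposition of $J^{*}$ coming from $B$ (which yields primary components lying over primes of $A$ that are contractions of primes of $B$) with a decomposition of the $A/I$-module $J^{*}/J$ supplied by step~1 should reconstitute a primary decomposition of $J$ in $A$, with each component containing a power of its radical.

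The main obstacle is the gluing in the second step: verifying that the combined decomposition really equals $J$ (rather than some ideal strictly between $J$ and $J^{*}$), and that the power-of-radical condition survives the combination. The hypothesis that the finitely many primes of $A$ above $I$ are all maximal is decisive here, since it forces the two families of primary components — those transferred from $B$ and those extracted from the $A/I$-module structure on $J^{*}/J$ — to involve disjoint sets of primes in $A$, so that a Chinese remainder type argument inside $A/I$ can merge them without interaction.
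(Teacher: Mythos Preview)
The paper does not prove this statement at all; it is quoted verbatim from \cite[Corollaire 9]{r81} and used as a black box in Lemma~\ref{keylemma}. So there is no argument in the paper to compare yours against, and your proposal has to be judged on its own.

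Your Step~1 can be made to work, but not by the route you sketch. The efficient argument is to take a primary decomposition of $I$ in $B$, contract it to $A$, and observe that every associated prime of the contraction lies among the finitely many maximal ideals $M_{1},\dots ,M_{r}$ containing $I$; this yields $M_{1}^{N_{1}}\cdots M_{r}^{N_{r}}\subseteq I$ for suitable $N_{i}$, so $A/I$ is a quotient of $\prod_{i}A/M_{i}^{N_{i}}$, a finite product of local rings with nilpotent maximal ideal, hence strongly Laskerian. Your ``localize and match primes via Lemma~\ref{kzero}'' does not by itself produce this conclusion.

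Step~2 has a genuine gap. You correctly obtain $I\cdot J^{*}\subseteq J$, but ``a decomposition of the $A/I$-module $J^{*}/J$ supplied by step~1'' is not something Step~1 gives you: strongly Laskerian is a statement about \emph{ideals}, not arbitrary modules. Even granting the finer fact that $A/I$ is a quotient of a finite product of local Artinian-like rings, the candidate identity $J=J^{*}\cap (J+I)$ that would make a Chinese-remainder merge work is \emph{false}: you only know $I\cdot J^{*}\subseteq J$, not $I\cap J^{*}\subseteq J$. (Take $A=k[t^{2},t^{3}]\subseteq B=k[t]$, $I=t^{2}B$, $J=t^{4}A$; then $J^{*}=t^{4}B$, $J+I=I$, and $J^{*}\cap(J+I)=t^{4}B\neq J$.) What is actually doing the work in this setting, and what your sketch misses, is the identity $A_{P}=B_{A\setminus P}$ for every prime $P$ of $A$ with $I\not\subseteq P$ (multiply any $b\in B$ by an element of $I\setminus P$ to land in $A$); this is the mechanism that transfers Laskerian data from $B$ to $A$ away from $V(I)$, and without it your gluing remains a hope rather than an argument.
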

 
 \begin{lemm}
 \label{keylemma} 
Let $A\subseteq B$ be an extension of rings and $R=A+XB[X]$. Then $R$ is a one-dimensional strongly Laskerian ring if and only if $A$ is a zero-dimensional strongly Laskerian ring and $B$ is an Artinian ring. 
 \end{lemm}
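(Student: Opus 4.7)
The proof proceeds in two directions.

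\emph{Backward direction.} Assume $A$ is zero-dimensional strongly Laskerian and $B$ is Artinian. Since $B$ is Artinian, $B[X]$ is a one-dimensional Noetherian ring, hence strongly Laskerian, and Lemma \ref{con56}.(3) yields $\dim(R)=1$. A zero-dimensional strongly Laskerian ring has only finitely many prime ideals: the primary decomposition of $(0)$ in $A$ has a finite set of associated primes, which exhausts $\operatorname{Spec}(A)$ because every prime of $A$ is maximal. By Lemma \ref{con56}.(1) the primes of $R$ containing the nonzero common ideal $XB[X]$ of $R$ and $B[X]$ are precisely $\{P+XB[X]:P\in\operatorname{Spec}(A)\}$, a finite set of maximal ideals of $R$. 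Theorem \ref{rs2} applied to the extension $R\subseteq B[X]$ with the common ideal $XB[X]$ then yields that $R$ is strongly Laskerian.

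\emph{Forward direction.} Assume $R$ is one-dimensional strongly Laskerian. Lemma \ref{con56}.(3) gives $\dim(A)=\dim(B)=0$ at once, and since $A\cong R/XB[X]$ is a homomorphic image of $R$, $A$ is strongly Laskerian. The remaining claim is that $B$ is Artinian, which—since $\dim(B)=0$—amounts to $B$ being Noetherian. I plan to argue this by contrapositive: suppose $B$ admits a strictly ascending chain $I_1\subsetneq I_2\subsetneq\cdots$ of ideals, set $J=\bigcup_n I_n$, and form the $R$-ideal
\[
H=\sum_{n\ge 1}X^nI_n,
\]
which is well-defined since each $I_n$ is an $A$-submodule of $B$. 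Laskerian yields a finite primary decomposition $H=Q_{\min}\cap Q_{\mathrm{emb}}$ in $R$, where $Q_{\mathrm{emb}}$ collects the components at the embedded associated primes. Each embedded associated prime that contains $X$ has the form $P+XB[X]$ for some $P\in\operatorname{Spec}(A)$, and the strongly-Laskerian hypothesis provides an integer $N$ with $(P+XB[X])^N\subseteq Q$ for every such embedded primary $Q$. Since $(P+XB[X])^N\supseteq X^NB[X]$, this gives $X^NB[X]\subseteq Q_{\mathrm{emb}}$. Intersecting,
\[
X^NB[X]\cap Q_{\min}\subseteq Q_{\min}\cap Q_{\mathrm{emb}}=H,
\]
and a degree-$N$ coefficient comparison, using that $Q_{\min}\subseteq\operatorname{Nil}(A)+XB[X]$, forces the inclusion $J\subseteq I_N$, which contradicts $I_N\subsetneq I_{N+1}\subseteq J$.

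\emph{Main obstacle.} The delicate step is the last degree-$N$ analysis and, in particular, identifying $Q_{\min}$ precisely enough to extract the needed coefficient-level inclusion $J\subseteq I_N$. I would handle it by first localizing $R$ at each of its (finitely many) maximal ideals of the form $M+XB[X]$ with $M$ maximal in $A$ to reduce to the case where $A$ is quasilocal; in that setting $Q_{\min}$ coincides with the nilradical of $R$ (so the coefficient of $X^N$ in any of its elements must lie in a prescribed proper ideal), and the desired contradiction can be read off directly from the definition of $H$.
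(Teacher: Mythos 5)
Your backward direction is correct and is essentially the paper's own argument: $B[X]$ is Noetherian (hence strongly Laskerian), $XB[X]$ is a nonzero common ideal of $R$ and $B[X]$ contained in only finitely many primes of $R$, all maximal (since a zero-dimensional Laskerian ring has finite spectrum), so Theorem \ref{rs2} applies, and Lemma \ref{con56}.(3) gives $\dim(R)=1$. The forward direction also begins as in the paper ($\dim(A)=\dim(B)=0$ from Lemma \ref{con56}.(3), and $A\cong R/XB[X]$ is strongly Laskerian as a quotient). But your argument that $B$ is Noetherian is a different route from the paper's and, as written, has two genuine gaps. First, you deduce $X^NB[X]\subseteq Q_{\mathrm{emb}}$ only from the embedded components whose radical contains $X$; the primes of $R$ in the set $S_{2}$ of Lemma \ref{con56}.(1) do not contain $X$, nothing in your write-up rules them out as embedded primes of $H$, and their primary components need not contain any power of $X$, so the stated inclusion is unjustified. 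Second, $X^NB[X]\cap Q_{\min}\subseteq H$ does not by itself yield $J\subseteq I_N$: for $b\in J$ you need $X^Nb\in Q_{\min}$, and your proposed justification --- that after localizing, $Q_{\min}$ is the nilradical of $R$ --- is false in general, because $H\subseteq N[X]\cap R$ only for those primes $N$ of $B$ with $J\subseteq N$, so the minimal primes over $H$ are not the minimal primes of $R$ and $\sqrt{H}\neq Nil(R)$. (The strategy is probably repairable: split all primary components of $H$ according to whether their radical contains $X$; radicals containing $X$ are maximal ideals $P+XB[X]$, so strong Laskerianness puts $X^NB[X]$, hence $X^Nb$, inside those components, while for a component $Q$ whose radical avoids $X$ one gets $X^Nb\in Q$ from $X^mb\in H\subseteq Q$ by primaryness. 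None of this is in your proposal.)

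The paper avoids all of this with a shorter device that you should compare with. Given a strictly ascending chain $(b_1,\dots,b_n)B$, let $J$ be the ideal of $R$ generated by $\{b_nX^n\}_{n\in\mathbb{N}}$. Then $b_nX^n=(b_nX)\cdot X\cdots X$ is a product of $n$ elements of $R$ no $(n-1)$-subproduct of which lies in $J$: the degree-$(n-1)$ coefficient of any element of $J$ lies in $Ab_{n-1}+Bb_{n-2}+\cdots+Bb_{1}\subseteq (b_1,\dots,b_{n-1})B$, which contains neither $1$ nor $b_n$. Hence $\omega_{R}(J)=\infty$, contradicting the fact that a strongly Laskerian ring is finite-absorbing \cite[Lemma 19]{c211}. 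Either adopt that argument or carry out the component-splitting sketched above in full.
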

 
\begin{proof}
Suppose that $R$ is one-dimensional strongly Laskerian ring. Then both $A$ and $B$ are zero-dimensional rings by Lemma \ref{con56}.(3). Since $A\cong R/XB[X]$ and $R$ is strongly Laskerian, so is $A$. It then remains to show that $B$ is Noetherian. We will use the argument similar to that of  \cite[Proposition 59]{c211}.  Suppose that $B$ is not Noetherian, and choose  $\{b_{n}\}_{n\in\mathbb{N}}\subseteq B$ so $\{(b_{1},\dots, b_{n})B\}_{n\in\mathbb{N}}$ is a strictly ascending chain of ideals of $B$. Set $J$ be the ideal of $R$ generated by $\{b_{n}X^{n}\}_{n\in\mathbb{N}}$. Then for each $n\in\mathbb{N}\setminus\{1\}$,
$b_{n}X^{n}=(b_{n}X)X^{n-1}$ is a product of $n$ elements of $R$, but no $(n-1)$-subproduct of it is in $J$. Thus $\omega_{R}(J)\ge n-1$. Since $n$ is chosen arbitrarily, we must have $\omega_{R}(J)=\infty$. However, since $R$ is strongly Laskerian, it must be finite-absorbing \cite[Lemma 19]{c211}, so we have a contradiction. Therefore $B$ must be Noetherian.

Conversely, suppose that $A$ is a zero-dimensional strongly Laskerian ring and $B$ is an Artinian ring. Then $B[X]$ is a Noetherian ring, and $XB[X]$ is a nonzero common ideal of $R$ and $B[X]$. From Lemma \ref{con56}.(1) it also follows that every prime ideal of $R$ containing $XB[X]$ is a maximal ideal, and there are only finitely many minimal prime ideals of $XB[X]$ in $R$. Hence we conclude that $XB[X]$ is contained in only finitely many prime ideals of $R$. Thus $R$ is strongly Laskerian by Theorem \ref{rs2}, and it is one-dimensional by Lemma \ref{con56}.(3). 
\end{proof}

Recall that a commutative ring $R$ is said to be \textit{von Neumann regular} if it is reduced and its Krull dimension is zero, \textit{B{\'e}zout} if every finitely generated ideal of $R$ is principally generated, \textit{arithmetical} if the set of ideals of $R_{M}$ forms a chain under set inclusion for each maximal ideal $M$ of $R$, and \textit{satisfies $(*)$} if each ideal of $R$ whose radical is prime is a primary ideal of $R$ \cite{g62, g64, gm65}. 
It is known that $R[X]$ is a B{\'e}zout ring if and only if $R$ is von Neumann regular \cite[Theorem 18.7]{g84}, \cite[Theorem 6]{a76}. If $A\subsetneq B$ is a ring extension, then
even if $A$ and $B$ are both von Neumann regular rings, $A+XB[X]$ may not be B{\'e}zout; if $A\subsetneq B$ are fields, then $A+XB[X]$ is not B{\'e}zout \cite[Corollary 1.1.9.(1)]{fhp}. On the other hand, $\mathbb{Z}+X\mathbb{Q}[X]$, being a pullback domain $\mathbb{Z}\times_{\mathbb{Q}}\mathbb{Q}[X]$, is a two-dimensional B{\'e}zout domain by Lemma \ref{con56}.(3) and \cite[Corollary 1.1.11]{fhp}, 
 while $R[X]$ is a one-dimensional ring if $R[X]$ is B{\'e}zout. Hence one may conjecture that $A+XB[X]$ is a one-dimensional B{\'e}zout ring if and only if $A=B$ is a von Neumann regular ring, and this is indeed true as the next Proposition shows.

\begin{prop}
\cite[Corollaries 3.4 and 4.8]{mad18})
\label{coro56}
Let $A\subseteq B$ rings, $X$ an indeterminate, and $R=A+XB[X]$.
\begin{enumerate}
\item The following are equivalent.
\begin{enumerate}
    \item $R$ is a one-dimensional reduced ring.
    \item Both $A$ and $B$ are von Neumann regular rings.
\end{enumerate}
\item The following are equivalent.
\begin{enumerate}
\item $R$ is a Mori PVD for each maximal ideal $M$ of $R$. 
    \item $R_{M}$ is a one-dimensional domain for each maximal ideal $M$ of $R$.
    \item $R$ satisfies  $(*)$.
    \item $R$ is a one-dimensional LPVR.
    \item $R$ is a one-dimensional locally divided ring.
    \item $A\subseteq B$ is a unibranched extension of von Neumann regular rings.
\end{enumerate}
   \item The following are equivalent.
    \begin{enumerate}
        \item $R$ is a one-dimensional strongly Laskerian reduced ring.
        \item Both $A$ and $B$ are  semi-quasilocal von Neumann regular rings.
    \end{enumerate}
   \item The following are equivalent.
    \begin{enumerate}
        \item $R$ is a one-dimensional B{\'e}zout ring.
        \item $R_{M}$ is a DVR for each maximal ideal $M$ of $R$.
        \item $R$ is a one-dimensional arithmetical ring.
        \item $A=B$ is a von Neumann regular ring.
    \end{enumerate}
    \item The following are equivalent.
\begin{enumerate}
    \item $R$ is a TAF-ring.
    \item $R$ is an FAF-ring.
    \item $R$ is strongly Laskerian and locally divided.
    \item $R$ is finite-absorbing and locally divided.
    \item $A\subseteq B$ is a unibranched extension of semi-quasilocal von Neumann regular rings.
   \item There exist fields $\{F_{i}\}_{i=1}^{n}$ and $\{L_{i}\}_{i=1}^{n}$ such that  $A\cong\prod\limits_{i=1}^{n}F_{i}$,  $B\cong\prod\limits_{i=1}^{n}L_{i}$ and $F_{i}\subseteq L_{i}$ for each $i\in\{1,\dots, n\}$.
    \end{enumerate}
    \item (cf. \cite[Corollary 4.4]{adk}) The following are equivalent.
    \begin{enumerate}
        \item $R$ is a general ZPI-ring.
        \item $R$ is a strongly Laskerian B{\'e}zout ring.
        \item $R$ is a TAF-ring and $A=B$.
        \item $A=B$ is a semi-quasilocal von Neumann regular ring.
    \end{enumerate}
\end{enumerate}
\end{prop}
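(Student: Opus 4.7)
The unifying strategy is to exploit the pullback presentation $R = A\times_{B} B[X]$ (identifying $B$ with $B[X]/XB[X]$ via $X \mapsto 0$), and to deduce each part by combining this with Lemma~\ref{con56}, Lemma~\ref{keylemma}, Theorem~\ref{taf54}, and the pullback machinery of \cite{fhp}. Lemma~\ref{con56}(1) provides the bridge between prime ideals of $R$ and those of $A$ and $B[X]$, while \cite[Lemma 1.1.6]{fhp} describes each localization $R_{M}$ as a pullback of localizations of $A$ and $B[X]$ at the corresponding primes.

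For part (1), the inequality in Lemma~\ref{con56}(3) forces $\textnormal{dim}(R)=1$ iff $\textnormal{dim}(A)=\textnormal{dim}(B)=0$, using that $\textnormal{dim}(B[X])=1$ whenever $B$ is reduced zero-dimensional; reducedness of $R$ transfers to $B$ via $R\hookrightarrow B[X]$, and the conjunction is von Neumann regularity of $B$ (and hence of the subring $A$). Part (3) then follows from (1) together with Lemma~\ref{keylemma}, since a reduced zero-dimensional strongly Laskerian ring has only finitely many primes and is therefore semi-quasilocal.

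For part (2), fix $M = P + XB[X]$ with $P\in\textnormal{Spec}(A)$. The unibranched vNR hypothesis provides a unique prime $Q$ of $B$ above $P$, and \cite[Lemma 1.1.6]{fhp} exhibits $R_{M}$ as a pullback of the field $A_{P}$ with a localization of $B[X]$ that is a DVR; Proposition~\ref{pullback1} and Corollary~\ref{accp0} then identify $R_{M}$ as a Mori PVD. The other listed conditions on $R_{M}$ (one-dimensional domain, $(*)$, LPVR, locally divided) follow at once, and each conversely forces the pullback to have the required form. For part (4), if $R$ is B\'ezout then for every $b\in B$ the ideal $(X, bX)R$ must be principal, which a direct unwinding of the pullback shows can happen only if $b\in A$; hence $A = B$, and $R = A[X]$ is one-dimensional B\'ezout iff $A$ is vNR by \cite[Theorem 18.7]{g84} and \cite[Theorem 6]{a76}.

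For part (5), the equivalences among (a), (c), (d) follow from Theorem~\ref{taf54} together with the fact that LPVR implies locally divided, while the converse implication finite-absorbing plus locally divided $\Rightarrow$ LPVR is available via \cite[Lemma 20]{c211}. The implication (a)$\Rightarrow$(e) combines (3) with the unibranchedness extracted from the local analysis in (2); the converse uses (2) to show each $R_{M}$ is either a Mori PVD or a DVR, so $R$ is TAF by Proposition~\ref{tafmori} and Theorem~\ref{taf54}. The equivalence (e)$\Leftrightarrow$(f) is the structure theorem for semi-quasilocal vNR rings, and (b)$\Leftrightarrow$(a) follows because FAF-ness in this pullback setting forces one-dimensionality and hence TAF-ness. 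Part (6) is (5) specialized to $A=B$, noting that $\prod F_{i}[X]$ is a product of PIDs and hence a general ZPI-ring. The main obstacle is the careful pullback localization in part (2), which must produce all the listed characterizations of $R_{M}$ from a single computation, and the subsequent extraction of unibranchedness in (5); once these are in hand, the remaining equivalences follow formally.
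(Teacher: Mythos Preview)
Your overall architecture is right, and much of it matches the paper: parts (1), (3), and the easy directions of (5) and (6) go through essentially as you describe. However, there are two genuine gaps.

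\textbf{Part (2), the converse direction.} You prove only $(f)\Rightarrow(a)$, and then assert that ``each conversely forces the pullback to have the required form.'' This is the substantive content of the proposition, and it does not follow formally. The delicate implication is $(e)\Rightarrow(f)$: from ``$R$ is one-dimensional locally divided'' you must extract both that $A,B$ are reduced and that $A\subseteq B$ is unibranched. The paper's argument is specific: for a minimal prime $I = P + XN[X]$ contained in a maximal ideal $M$, the divided hypothesis gives $IR_{M}\subseteq XR_{M}$ (since $X\notin I$), hence $\mathrm{Nil}(R)\subseteq XR$; from this one deduces $\mathrm{Nil}(A)=0$ and then $\mathrm{Nil}(B)\subseteq A$, so $B$ is reduced. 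Unibranchedness is then obtained by noting that two distinct primes of $B$ over a single $P\in\mathrm{Spec}(A)$ would produce two incomparable primes of $R$ under the same maximal ideal, contradicting divided. None of this is automatic from the pullback description. You also only treat maximal ideals in $S_{1}$; for $(f)\Rightarrow(a)$ you must check separately that $R_{M}$ is a DVR when $M\in S_{2}$, which the paper does via \cite[Theorem 6]{a76}.

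\textbf{Part (4), the elementary argument.} Your claim that $(X,bX)R$ principal forces $b\in A$ does not unwind as directly as you suggest. Writing $(X,bX)=(f)$ with $f\in XA + bXA + X^{2}B[X]$, one finds $f = X(a_{1}+ba_{2}+Xg)$ and the relations $X=fh$, $bX=fk$ yield only $b(1-a_{2}k(0)) = a_{1}k(0)\in A$, which does not give $b\in A$ without further input. The paper instead passes through $(c)\Rightarrow(d)$: once $R$ is arithmetical, part (2) gives the unibranched vNR extension, and then for each $P\in\mathrm{Spec}(A)$ the local pullback $R_{M}=A_{P}\times_{B_{N}}B[X]_{N+XB[X]}$ is a valuation domain, which by \cite[Proposition 1.1.8.(1)]{fhp} forces $A_{P}=B_{N}$; globalizing via $(B/A)_{P}=0$ for all $P$ gives $A=B$. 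Your route can likely be repaired, but it is not a one-line unwinding.
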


\begin{proof}
(1): $(a)\Rightarrow (b)$: Suppose that $R$ is one-dimensional reduced ring. Then both $A$ and $B$ are zero-dimensional by Lemma \ref{con56}.(3). Now, $A$, being a subring of a reduced ring $R$, is a reduced ring. Similarly, if $b\in Nil(B)$, then $bX\in Nil(R)=(0)$, so $b=0$ and we conclude that $Nil(B)=(0)$. Since both $A$ and $B$ are reduced rings with Krull dimension zero, they must be von Neumann regular.\\
\\
$(b)\Rightarrow (a)$: Suppose that $(b)$ holds. Then $\textnormal{dim}(A)=0$ and $\textnormal{dim}(B[X])=1$ \cite[Corollary 30.3]{G}, so we have $\textnormal{dim}(R)=1$ by Lemma \ref{con56}.(3). On the other hand, since $B$ is reduced, so is $B[X]$. Hence $R$ is reduced since it is a subring of $B[X]$.\\
\\
(2): $(a)\Rightarrow(b)$: By Proposition \ref{tafmori}, a Mori PVD has Krull dimension at most 1. Since $R$ does not have any maximal ideal of height 0, $(a)$ implies $(b)$.\\
\\
$(b)\Rightarrow(c)\Rightarrow (e)$: Follows from the fact that $R$ satisfies $(*)$ if and only if for each maximal ideal $M$ of $R$, $R_{M}$ is either a zero-dimensional ring or a one-dimensional domain \cite[Theorem 1]{gm65}.\\
\\
$(e)\Rightarrow (f)$: Suppose that $(e)$ holds. 
Now choose a maximal ideal $M$ of $R$ and a minimal prime ideal $I$ of $R$ contained in $M$. Since $\textnormal{dim}(A)=0$ by Lemma \ref{con56}.(3), $I\not\in S_{1}$. Hence $I\in S_{2}$, and $I=N[X]\cap R$ for some $N\in\textnormal{Spec}(B)$. Let $P=N\cap A$, and we have $I=P+XN[X]$. Since $X\not\in I$, we have $Nil(R)R_{M}\subseteq IR_{M}\subseteq XR_{M}$ since $R_{M}$ is locally divided. Therefore $Nil(R)\subseteq XR$. If $a\in Nil(A)$, then $a\in Nil(R)\subseteq XR$, so we must have $a=0$. Hence $Nil(A)=(0)$ and $A$ is reduced. Similarly, if $b\in Nil(B)$, then $bX\in Nil(R)\subseteq XR=XA+X^2B[X]$, so $bX\in XA$ and $b\in A$. Thus $b\in Nil(A)=(0)$ and we conclude that $Nil(B)=(0)$. Then $R$ is reduced, being a subring of a reduced ring $B[X]$. Hence $R$ is a one-dimensional reduced ring, and both $A$ and $B$ are von Neumann regular by (1).

It remains to show that $A\subseteq B$ is unibranched. Let $P$ be a prime ideal of $A$. Then by Lemma \ref{kzero} there exists a prime ideal of $B$ that contracts to $P$. On the other hand, suppose that there exist two prime ideals $M,N$ of $B$ that contracts to $P$. Then $M'=M[X]\cap R=P+XM[X]$ and $N'=N[X]\cap R=P+XN[X]$ are incomparable prime ideals of $R$ contained in the maximal ideal $P+XB[X]$ of $R$. Since $R$ is locally divided, this is a contradiction \cite[Proposition 2.1.(d)]{bd01}. Hence there exists exactly one prime ideal of $B$ that contracts to $P$. Consequently, $A\subseteq B$ is unibranched. \\
\\
$(f)\Rightarrow (a)$: Suppose that $(f)$ holds, and choose a maximal ideal $M$ of $R$. If $M\in S_{1}$, then $M=P+XB[X]$ for some $P\in \textnormal{Spec}(A)$. Let $S=(A\setminus P)+XB[X]$ and choose $N\in\textnormal{Spec}(B)$ such that $M=N\cap A$. Since $A\subseteq B$ is unibranched, $M_{0}=N[X]\cap R=P+XN[X]$ is the only prime ideal of $R$ properly contained in $M$, and $A_{P}$ is a subfield of a field $B_{N}$ since the former is isomorphic to $A/P$ and the latter to $B/N$. Therefore, $B[X]_{S}=B[X]_{N+XB[X]}=(B_{N}[X])_{N+XB[X]}$ is a local PID, which is a DVR. Since $R_{M}$ is a pullback domain $A_{P}\times_{B_{N}}B[X]_{S}$ \cite[Proposition 1.9]{f80}, $R_{M}$ is a Mori PVD by \cite[Proposition 2.6]{ad} and Corollary \ref{accp0}. If $M\in S_{2}$, then $M=Q\cap R$ for some maximal ideal $Q$ of $B[X]$ such that $X\not\in Q$, and $R_{M}\cong B[X]_{Q}$. Since $B$ is von Neumann regular, $B[X]_{Q}$ is a DVR as mentioned in the proof of $(1)\Rightarrow (2)$ of \cite[Theorem 6]{a76}. In particular, $R_{M}$ is a Mori PVD. Therefore $(a)$ follows.\\
\\
$(a)\Rightarrow(d)$: Follows from definition.\\
\\
$(d)\Rightarrow(e)$: Follows from \cite[Lemma 1.(a)]{abd97}.\\
\\
(3): Follows from (1) and Lemma \ref{keylemma}.
\\
\\
(4): $(a)\Rightarrow(c)$: Follows from \cite[Theorem 2]{j66}.\\
\\
$(c)\Rightarrow (d)$: Suppose that $R$ is a one-dimensional arithmetical ring. Then by (1), $A\subseteq B$ is a unibranched extension of von Neumann regular rings. Now, let $P$ be a maximal ideal of $A$ and $M=P+XB[X]$. Then there exists unique maximal ideal $N$ of $B$ such that $N\cap A=P$, and $R_{M}$ is the pullback domain $A_{P}\times_{B_{N}}B[X]_{N+XB[X]}$ as mentioned in the proof of $(f)\Rightarrow (a)$ of (2). Since $R$ is arithmetical, $R_{M}$ is a valuation domain, and we must have $A_{P}=B_{N}$ \cite[Proposition 1.1.8.(1)]{fhp}. Since $B/A$ is an $A$-module, and $(B/A)_{P}=B_{N}/A_{P}=0$ for each prime ideal $P$ of $A$, we have $B/A=0$ and $A=B$.\\
\\
$(d)\Rightarrow (a)$: Follows from \cite[Theorem 18.7]{g84}. \\
\\
$(b)\Rightarrow(c)$: Trivial.\\
\\
$(d)\Rightarrow (b)$: Follows from \cite[Theorem 6]{a76}.\\
\\
(5): $(a)\Rightarrow(b)$: Trivial.\\ 
\\
$(b)\Rightarrow(c)$: Follows from \cite[Proposition 33]{c211}.\\
\\
$(c)\Leftrightarrow (d)$: Follows from \cite[Lemma 20.(2)]{c211}.\\
\\
$(d)\Rightarrow(e)$: Suppose that $R$ is a finite-absorbing locally divided ring. Then $R$ satisfies $(*)$ \cite[Proposition 25]{c211}. Hence by (2),  $A\subseteq B$ is a unibranched extension of von Neumann regular rings. On the other hand, since $R$ is finite-absorbing, so is $A$ \cite[Lemma 18.(4)]{c211}, and $A$ has only finitely many minimal prime ideals \cite[Theorem 2.5]{Anderson}. Since $A$ is zero-dimensional, $A$ is semilocal, and so is $B$ since $A\subseteq B$ is unibranched. 
\\
\\
$(e)\Rightarrow(f)$: Assume $(e)$. Since $A$ and $B$ are semilocal von Neumann regular rings, they are Artinian reduced rings. Hence $A$ (respectively, $B$) must be isomorphic to finite product of fields $F_{1}\times\cdots\times F_{n}$ (respectively, $L_{1},\dots, L_{m}$). Since $A\subseteq B$ is unibranched, we may assume that $n=m$. In fact, for each $i\in\{1,\dots, n\}$, every maximal ideal of $A$ (respectively, $B$) is of the form $M_{i}=\prod\limits_{j=1}^{n}S_{j}$ where $S_{j}=F_{j}$ if $j\neq i$ and $F_{i}=\{0\}$ (respectively, $N_{i}=\prod\limits_{j=1}^{n}T_{j}$ where $T_{j}=L_{j}$ if $j\neq i$ and $T_{i}=\{0\}$). Hence by reindexing $\{F_{1}\dots, F_{n}\}$ if necessary, we may assume that $M_{i}=N_{i}\cap A$ for each $i$. Then
for a fixed $i\in\{1,\dots, n\}$, the ring $F_{i}\cong A/M_{i}$ can be embedded to $L_{i}\cong B/N_{i}$. Hence we may assume that $F_{i}\subseteq L_{i}$.
\\
\\
$(f)\Rightarrow (e)$:  It is well-known that a finite product of fields is semi-quasilocal von Neumann regular rings. Hence if $(f)$ is true, then $A\subseteq B$ is an extension of semi-quasilocal von Neumann regular rings. It is also a unibranched extension since $F_{i}\subseteq L_{i}$ is unibranched for each $i\in\{1,\dots, n\}$.\\
\\
$(e)\Rightarrow(a)$: Suppose that $(e)$ is true. Then $R$ is a strongly Laskerian LPVR by (2) and (3). Hence $R$ is a TAF-ring by Theorem \ref{taf54}.\\ 
\\
(6): $(a)\Rightarrow(b)$: Suppose that $R$ is a general ZPI-ring. Then $R$ has Krull dimension one \cite[p.469]{G}, and $R$ is a strongly Laskerian arithmetical ring \cite[Corollary 36]{c211}, so $A=B$ is a von Neumann regular ring by (4). Therefore $R$ is strongly Laskerian by (5), and must be a B{\'e}zout ring by (4).\\
\\
$(b)\Leftrightarrow(c)$: Assume $(b)$. Since every B{\'e}zout ring is locally divided, $R$ must be a one-dimensional TAF-ring by (2) and (5). Hence by (4) we have $A=B$.\\
\\
$(c)\Leftrightarrow(d)$: Follows from (5).\\
\\
$(c)\Rightarrow (a)$: If $R$ is a TAF-ring and $A=B$, then $R$ is a general ZPI-ring \cite[Corollary 4.4]{adk}. 
\end{proof}

In a univariate polynomial ring, several statements of Proposition \ref{coro56} are equivalent.

\begin{theorem}
\label{theorem54}
The following are equivalent for a ring $R$.
\begin{enumerate}
    \item $R$ is a von Neumann regular ring.
    \item $R[X]$ is a B{\'e}zout ring.
    \item $R[X]$ is an arithmetical ring.
    \item $R[X]$ satisfies $(*)$.
    \item $R[X]_{M}$ is a DVR for each maximal ideal $M$ of $R[X]$.
    \item $R[X]_{M}$ is Mori PVD  for each maximal ideal $M$ of $R[X]$.
    \item $R[X]_{M}$ is a PVD for each maximal ideal $M$ of $R[X]$.
    \item $R[X]$ is a LPVR.
    \item $R[X]_{M}$ is a TAF-domain  for each maximal ideal $M$ of $R[X]$.
    \item $R[X]_{M}$ is a TAF-ring  for each maximal ideal $M$ of $R[X]$.
    \item $R[X]_{M}$ is a FAF-ring  for each maximal ideal $M$ of $R[X]$.
    \item $R[X]_{M}$ is a one-dimensional domain for each maximal ideal $M$ of $R[X]$.
    \item $R[X]$ is a locally divided ring.
    \item $R[X]$ is a one-dimensional reduced ring.
\end{enumerate}
\end{theorem}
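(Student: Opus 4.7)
The strategy is to apply Proposition \ref{coro56} with $A=B=R$, in which case $A+XB[X]=R[X]$ and the equivalences in each of parts (1), (2), and (4) of that proposition collapse into clusters of equivalences for $R[X]$ alone. Specifically, part (1) yields $(1)\Leftrightarrow(14)$; part (2), upon observing that ``$A\subseteq B$ is a unibranched extension of von Neumann regular rings'' degenerates to ``$R$ is von Neumann regular'', yields $(1)\Leftrightarrow(4)\Leftrightarrow(6)\Leftrightarrow(12)$ together with the one-dimensional variants of (8) and (13); and part (4) yields $(1)\Leftrightarrow(5)$ together with the one-dimensional variants of (2) and (3). Several remaining implications are immediate: $(2)\Rightarrow(1)$ is the classical fact that $R[X]$ is B\'ezout iff $R$ is von Neumann regular (\cite[Theorem 18.7]{g84}); $(3)\Rightarrow(1)$ follows by the pullback computation used in the proof of Proposition \ref{coro56}(4); $(7)\Rightarrow(13)$ is Theorem \ref{theo111}; and $(8)\Rightarrow(13)$ comes from \cite[Lemma 1.(a)]{abd97}.

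For the TAF/FAF tier, Proposition \ref{tafmori} yields $(6)\Rightarrow(9)$ (a local Mori PVD is a Mori LPVD, hence a TAF-domain); the chain $(9)\Rightarrow(10)\Rightarrow(11)$ is obvious from the hierarchy TAF-domain $\subseteq$ TAF-ring $\subseteq$ FAF-ring; and $(10)\Rightarrow(8)$ comes from Theorem \ref{taf54}(5), because a local (hence indecomposable) TAF-ring must be either an atomic PVR or a Mori LPVD, both of which are PVRs.

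The main obstacle is $(13)\Rightarrow(1)$, namely removing the one-dimensionality hypothesis from the locally divided condition. My plan is to argue directly: assume $R[X]$ is locally divided, so for each maximal ideal $M$ of $R[X]$ the primes of $R[X]_M$ form a chain. Using $\Spec(R[X])=S_1\cup S_2$ from Lemma \ref{con56}(1), a chain $P_0\subsetneq P_1\subseteq P$ in $\Spec(R)$ would produce two primes $(P_0+XR[X])R[X]_M$ and $P_1[X]R[X]_M$ of $R[X]_M$ with $M=P+XR[X]$ that are incomparable (the former contains $X$, the latter does not, and a quick check of constant terms rules out the reverse containment), a contradiction. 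This forces $\textnormal{dim}(R)=0$. Similarly, any nonzero nilpotent $r\in R$ lies in some minimal prime $P_0$; choosing $M=P+XR[X]$ maximal above $P_0[X]$ and using the divided property of $P_0[X]R[X]_M$ relative to $X\notin P_0[X]$ yields an equation $r t_0=0$ with $t_0\in R\setminus P$, forcing $r=0$ in $R_P$, and hence $r=0$ in $R$ by covering $R$ with localizations at maximals. So $R$ is zero-dimensional and reduced, i.e., von Neumann regular. Finally, $(11)\Rightarrow(1)$ is treated along the same template, using the Krull-dimension bound for FAF-rings from the structure theory of \cite{adk}, combined with Lemma \ref{con56}(3) and the localization trick above.
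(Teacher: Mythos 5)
Your proposal follows the paper's route for most of the cycle: specializing Proposition \ref{coro56} to $A=B=R$, citing \cite[Theorem 18.7]{g84} for the B\'ezout condition, using Proposition \ref{tafmori} to link (6) and (9), and Theorem \ref{taf54} to pull (10) back to (8). The one place where you genuinely depart from the paper is $(13)\Rightarrow(1)$: the paper disposes of $(1)\Leftrightarrow(4)\Leftrightarrow(13)$ by citing \cite[Theorem 60]{c211}, whereas you give a direct argument. That argument is correct and worth having: deriving $\textnormal{dim}(R)=0$ from the incomparability of $P_{0}+XR[X]$ and $P_{1}[X]$ inside a divided localization, and then killing nilpotents via the constant-term computation $rs_{0}=0$ with $s_{0}\notin P$, is essentially the $(e)\Rightarrow(f)$ step of Proposition \ref{coro56}(2) with the one-dimensionality hypothesis removed, and it makes the key closing implication self-contained.

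The genuine gap is condition (11). Treating $(11)\Rightarrow(1)$ ``along the same template \dots combined with the localization trick above'' cannot work as stated, because the localization trick is a divided-ring argument and an FAF-ring need not be locally divided --- that is exactly why Theorem \ref{taf54} must pair ``finite-absorbing'' with ``locally divided'' in its condition (4) rather than use finite absorption alone. The dimension bound from \cite{adk} together with Lemma \ref{con56}(3) does give $\textnormal{dim}(R)=0$, but reducedness does not follow from any divided-ness consideration. To see what is actually needed: for $R=k[\epsilon]$ with $\epsilon^{2}=0$ and $M=(\epsilon,X)$, one must show $R[X]_{M}$ is not an FAF-ring, and the reason is that the ideal $(X^{n},\epsilon)$ is not a product of two or more proper ideals (any such product lies in $M^{2}$, which misses $\epsilon$) while $\omega_{R[X]_{M}}((X^{n},\epsilon))=n$ is unbounded; divided primes play no role. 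So you need either an argument of this kind for a general nonzero nilpotent $r$, or a correct citation from \cite{adk} or \cite{c211}. (The paper is also terse here, asserting that $(11)\Rightarrow(12)$ ``follows from definition,'' so this is the step that genuinely requires work in either write-up.) A smaller repair: with $A=B=R$ the pullback computation in Proposition \ref{coro56}(4) is vacuous, so it does not yield $(3)\Rightarrow(1)$; the correct route is that an arithmetical ring has chained, hence divided, localizations, so $(3)\Rightarrow(13)$ and your own argument finishes.
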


\begin{proof}

$(1)\Rightarrow(2)$: \cite[Theorem 18.7]{g84}.\\
\\
$(2)\Rightarrow(3)$: \cite[Theorem 2]{j66}.\\
\\
$(5)\Rightarrow(6)\Rightarrow(7)\Rightarrow(8)$: Well-known. \\
\\
$(8)\Rightarrow(13)$: \cite[Lemma 1.(a)]{abd97}. \\
\\
$(6)\Leftrightarrow(9)$: Proposition \ref{tafmori}.\\
\\
$(9)\Rightarrow(10)\Rightarrow(11)\Rightarrow(12)\Rightarrow(13)$: Follows from definition.\\
\\
$(1)\Leftrightarrow(4)\Leftrightarrow (13)$: \cite[Theorem 60]{c211}. \\
\\
$(1)\Leftrightarrow(3)\Leftrightarrow(5)\Leftrightarrow(14)$: Proposition \ref{coro56}.(1).
\end{proof}

We end this paper with an example showing how we can construct an integral domain whose overrings are FAF-domains using the $A+XB[X]$ construction.

\begin{coro}
\label{ccc45}
Let $A\subseteq B$ be an extension of integral domains, $X$ an indeterminate, and $R=A+XB[X]$. Then the following are equivalent.
\begin{enumerate}
\item $A\subseteq B$ is an algebraic field extension. 
\item Every overring of $R$ is a TAF-domain.
\item Every overring of $R$ is a FAF-domain.
\end{enumerate}
\end{coro}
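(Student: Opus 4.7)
The plan is to exploit the pullback description $R = A \times_B B[X]$ (with $B[X] \to B$ given by evaluation at $0$) and pass freely between the ring structure and its spectrum via Lemma \ref{con56}, then apply the TAF-characterizations from Proposition \ref{coro56}.(5) and Corollary \ref{tafover}. The implication (2) $\Rightarrow$ (3) is essentially free: every TAF-domain has AF-dimension at most $2$ by definition, hence is an FAF-domain.

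For (1) $\Rightarrow$ (2), I would first note that if $A \subseteq B$ is an algebraic field extension, then $A$ and $B$ are (trivially) semi-quasilocal von Neumann regular rings and the extension $A \subseteq B$ is unibranched, since fields have only the zero prime. So Proposition \ref{coro56}.(5) immediately gives that $R$ is a TAF-ring, and a TAF-domain since $B$ is a domain. Next, because $B$ is integral over $A$, the ring $B[X]$ is integral over $R$; and $B[X]$ is a PID, so it is both integrally closed and completely integrally closed inside the common quotient field $B(X)$. This forces $R' = R^* = B[X]$, a Dedekind domain. Since $R$ is a Mori LPVD (by Proposition \ref{tafmori}), Corollary \ref{tafover} (implication (5) $\Rightarrow$ (1)) then gives that every overring of $R$ is a Mori LPVD, hence a TAF-domain.

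For (3) $\Rightarrow$ (1), I would first invoke the fact (recalled in the proof of Theorem \ref{faf1}) that every FAF-domain has Krull dimension at most $1$. Combined with the lower bound $\dim R \ge 1 + \max\{\dim A,\dim B\}$ from Lemma \ref{con56}.(3), this forces $\dim A = \dim B = 0$, so the integral domains $A$ and $B$ are both fields. To show $B/A$ is algebraic, I would suppose for contradiction that some $b \in B$ is transcendental over $A$, and consider $T = A[b] + XB[X]$. This ring lies between $R$ and $B[X]$ inside $B(X)$, so it is an overring of $R$, and it has the same shape $A' + XB'[X]$ with $A' = A[b]$ and $B' = B$. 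Since $A[b] \cong A[Y]$ is a polynomial ring over a field, $\dim A[b] = 1$, so Lemma \ref{con56}.(3) applied to the extension $A[b] \subseteq B$ yields $\dim T \ge 2$, contradicting the hypothesis that $T$ is an FAF-domain.

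The main obstacle is minor and bookkeeping in nature: I need to verify carefully that $T = A[b] + XB[X]$ is genuinely an overring of $R$ (which holds because $R \subseteq T \subseteq B[X] \subseteq B(X)$, their common quotient field), and to pin down $R' = R^*$ correctly in the algebraic case by lifting integrality from $B/A$ coefficient-wise to $B[X]/R$, using complete integral closedness of $B[X]$ to terminate the chain $R \subseteq R' \subseteq R^* \subseteq B[X]$.
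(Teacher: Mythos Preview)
Your proof is correct and follows essentially the same route as the paper's: both use Proposition \ref{coro56}.(5) and Corollary \ref{tafover} for (1)$\Rightarrow$(2), and both kill a hypothetical transcendental $b\in B$ via the overring $A[b]+XB[X]$ and the dimension bound of Lemma \ref{con56}.(3) for (3)$\Rightarrow$(1). The only cosmetic differences are that the paper cites \cite[Lemma 1.1.4.(9)]{fhp} for $R'=R^{*}=B[X]$ where you argue it directly, and the paper invokes Proposition \ref{coro56}.(5) (applied to $R$ itself) to conclude $A$ and $B$ are fields where you use the dimension inequality from Lemma \ref{con56}.(3).
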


\begin{proof}
$(1)\Rightarrow (2)$: Suppose that $A\subseteq B$ is an algebraic field extension. Then $R$ is a TAF-domain by Proposition \ref{coro56}.(5).  Moreover, $R'=R^{*}=B[X]$ \cite[Lemma 1.1.4.(9)]{fhp}. Hence every overring of $R$ is a TAF-domain by Proposition \ref{tafmori} and Corollary \ref{tafover}.\\
\\
$(2)\Rightarrow (3)$: Trivial.\\
\\
$(3)\Rightarrow (1)$: Suppose that every overring of $R$ is a FAF-domain. Then $A$ and $B$ are fields by Proposition \ref{coro56}.(5). To prove that $A\subseteq B$ is algebraic, notice that if there is an element $t$ of $B$ that is transcendental over $A$, then $A[t]+XB[X]$ is a two-dimensional overring of $R$ by Lemma \ref{con56}.(3), which contradicts our assumption since every FAF-domain has Krull dimension at most one \cite[Theorem 4.1]{adk}.
\end{proof}

\section*{acknowledgement}
This work was supported by the National
Research Foundation of Korea (NRF) grant funded by the Korea government (MSIT)(No.
2022R1C1C2009021).

\end{document}